\def\thmt@refnamewithcomma #1#2#3,#4,#5\@nil{%
  \@xa\def\csname\thmt@envname #1utorefname\endcsname{#3}%
  \ifcsname #2refname\endcsname
    \csname #2refname\expandafter\endcsname\expandafter{\thmt@envname}{#3}{#4}%
  \fi
}
\definecolor{lightred}{rgb}{1,.60,.60}
\declaretheorem[numberwithin=section]{theorem}
\declaretheorem[sibling=theorem]{proposition}
\declaretheorem[sibling=theorem]{corollary}
\declaretheorem[sibling=theorem]{question}
\declaretheorem[sibling=theorem,style=definition]{definition}
\declaretheorem[sibling=theorem,style=remark]{lemma}
\declaretheorem[sibling=theorem,style=remark,refname={Fact,Facts}]{fact}
\declaretheorem[sibling=theorem,style=remark]{remark}
\declaretheorem[name=Claim,style=remark,numberwithin=theorem,refname={Claim,Claims}]{smallclaim}
\newcommand{\seq}[1]{{\left\langle{#1}\right\rangle}}
\newcommand{\rest}[1]{\! \upharpoonright_{#1}} 
\newcommand{\tth}{{}^{\textup{th}}}
\newcommand{\conc}{\hat{\,\,}}
\newcommand{\andd}{\,\,\,\&\,\,\,}
\newcommand{\Then}{\,\Longrightarrow\,}
\newcommand{\then}{\,\,\rightarrow\,\,}
\DeclareMathOperator{\dom}{dom}
\DeclareMathOperator{\stem}{stem}
\DeclareMathOperator{\Nar}{\texttt{Nar}}
\renewcommand{\setminus}{-}
\newcommand{\w}{\omega}
\newcommand{\s}{\sigma}
\renewcommand{\epsilon}{\varepsilon}
\renewcommand{\le}{\leqslant}
\renewcommand{\ge}{\geqslant}
\renewcommand{\leq}{\leqslant}
\renewcommand{\geq}{\geqslant}
\renewcommand{\preceq}{\preccurlyeq}
\renewcommand{\succeq}{\succcurlyeq}
\renewcommand{\npreceq}{\npreccurlyeq}
\renewcommand{\nsucceq}{\nsucccurlyeq}
\newcommand{\nge}{\ngeqslant}
\newcommand{\Tur}{\textup{\scriptsize T}}
\newcommand{\ML}{\textup{ML}}
\newcommand{\MLR}{\textup{\texttt{MLR}}}
\newcommand{\leb}{\lambda}
\newcommand{\cat}{\widehat{\phantom{\alpha}}}
\newcommand{\wock}{{\w_1^{\textup{ck}}}}
\newcommand{\A}{\mathcal{A}}
\newcommand{\U}{\mathcal{U}}
\newcommand{\T}{\mathcal{T}}
\newcommand{\dbrak}[1]{\ldbrack #1 \rdbrack}
\newcommand{\smallseq}[1]{\langle{#1}\rangle}
\newcommand{\PA}{\textup{PA}{}}
\newcommand{\yes}{\texttt{yes}}
\newcommand{\finh}{\textup{\scriptsize{fin-h}}}
\newcommand{\Access}{S}
\newcommand{\Rat}{\mathbb{Q}}
\newcommand{\given}{\,|\,}
\title{Bad oracles in higher computability and randomness}
\author{Laurent Bienvenu}
\address{LaBRI, 351, cours de la Lib\'eration, 33405 Talence Cedex France}
\email{laurent.bienvenu@computability.fr}
\urladdr{\url{http://www.labri.fr/perso/lbienvenu/}}
\author{Noam Greenberg} 
\address{School of Mathematics and Statistics, Victoria University of Wellington, Wellington, New Zealand}
\email{greenberg@sms.vuw.ac.nz}
\urladdr{\url{http://homepages.ecs.vuw.ac.nz/~greenberg/}}
\author{Benoit Monin}
\address{LACL, Cr\'eteil University, Cr\'eteil, France}
\email{benoit.monin@computability.fr}
\urladdr{\url{https://www.lacl.fr/~benoit.monin}}
\thanks{We would like to thank Joe Miller and Denis Hirschfeldt for useful conversations. 
Bienvenu was supported by the ANR grant RaCAF ANR-15-CE40-0016-01. Greenberg was supported by the Marsden Fund, by a Rutherford Discovery Fellowship from the Royal Society of New Zealand, and by a Turing research fellowship from the Templeton Foundation. Monin was partially supported by the Marsden Fund.}
\begin{document}

\begin{abstract}
Many constructions in computability theory rely on ``time tricks''. In the higher setting, relativising to some oracles shows the necessity of these. We construct an oracle~$A$ and a set~$X$, higher Turing reducible to~$X$, but for which $\Psi(A)\ne X$ for any higher functional~$\Psi$ which is consistent on all oracles. We construct an oracle~$A$ relative to which there is no universal higher $\ML$-test. On the other hand, we show that badness has its limits: there are no higher self-$\PA$ oracles, and for no~$A$ can we construct a higher $A$-c.e.\ set which is also higher $A$-$\ML$-random. We study various classes of bad oracles and differentiate between them using other familiar classes. For example, bad oracles for consistent reductions can be higher ML-random, whereas bad oracles for universal tests cannot. 
\end{abstract}	

\maketitle

\setcounter{tocdepth}{1}

\tableofcontents

\section{Introduction}

\epigraph{``If Croesus goes to war he will destroy a great empire.''}{Herodotus I.92}

Computability theorists are wont to say that ``Everything relativises''. This is not, strictly speaking, true: Shore, for example, showed \cite{Shore:Homogeneity} that upper cones in the Turing degrees are not, in general, isomorphic, or even elementarily equivalent. However all ``natural'' results (compared with techniques of coding models of arithmetic) do remain true if one adds any real parameter, and usually in a uniform way. For example, the construction of a universal Martin-L\"of test relativises to give a uniform oracle test which is universal for every oracle. 

In this paper we show that higher computability does differentiate between classes of oracles, at least when we consider continuous relativisations. Continuous reductions in the higher setting were used by Hjorth and Nies \cite{HjorthNies2007} when they set up the basic framework for higher ML-randomness. Similarly, Chong and Yu \cite{ChongYu} used continuous enumeration operators to define tests for randomness. In \cite{BGMContinuousHigherRandomness}, we argued that a good theory of higher randomness requires using continuous relativisation: from basic results such as van Lambalgen's theorem, to more elaborate, such as the equivalent characterisations of $K$-triviality, continuous relativisations are the ones that give the desired higher analogues. 

However, using continuous relativisations in the higher setting means giving up on time tricks. In classical computability, we sometimes use the fact that the time of computation lies in the same space as the lengths of the sequences we use: $\omega$. We call any use of this equality a \emph{time trick}. Also sometimes, the use of a time trick is just done because it is convenient, but can actually be avoided. An example can be found in the proof that no $\Delta^0_2$ sequence~$Y$ is weakly 2-random. In the usual proof we observe that~$Y$ belongs to the null $\Pi^0_2$ set $\bigcap_{t,n < \omega} \bigcup_{t>s}\{X \mid X \succeq Y_{t} \rest n\}$. This proof clearly uses a time trick, and in this case it is possible to remove it: instead we can consider the set $\A = \bigcap_{n} \bigcup_{s<\omega}\{X \mid X \succeq Y_{s} \rest n\}$. To see that this is null, we note that $\{Y_s\}_{s < \omega} \cup \{Y\}$ is a closed set and as any point in $\A$ is at distance $0$ of this closed set, we then have $\A \subseteq \{Y_s\}_{s < \omega} \cup \{Y\}$ which, as a countable set, has measure $0$.

Sometimes, time tricks cannot be avoided. We show that many straightforward properties of relativisations, which in lower computability rely on time tricks, become false in the higher settings. Like the Pythia misleading the king of Lydia, there are ``bad oracles'' such as:
\begin{itemize}
	\item An oracle which higher computes a set, but cannot compute it using consistent functionals (\cref{thm:bad_oracle_for_consistent_Turing}).
	\item An oracle relative to which there is no universal higher ML-test (\cref{th:no_A_unniversal_mlr_test})
\end{itemize}

We also put limits on how bad oracles can be, for example we show that there are no higher ``self-$\PA$'' oracles (\cref{def:self-PA_oracles}); also, for every oracle~$A$, there is a higher left $A$-c.e.\ sequence which is higher $A$-ML-random (\cref{thm:higher_left_c.e._random}). Both directions involve novel techniques. To show that oracles cannot be very bad, we sometimes need non-uniform arguments which do exhibit the distinct behaviour of different classes of oracles. In the other direction, the construction of a variety of bad oracles uses a new technique, using ``treesh-bones'', which we introduce in this paper. Finally we compare different classes of bad oracles and their properties with respect to classes such as higher Turing complete oracles, or higher random oracles. 

\smallskip

We remark that J.~Miller and M.~Soskova (unpublished) have recently studied bad oracles in a different setting, namely the enumeration degrees, in which for example there are self-PA degrees. It seems however that ``the symptoms are similar but the disease is different.'' In particular in their setting it is not the failure of time tricks which is the origin of badness, but rather the lack of a canonical form (enumeration) of the oracle. 

In a planned sequel to this paper, with J.~Miller, we show that like in the enumeration degree setting, some bad oracles can differentiate between different forms of higher ML-randomness.

\section{Background, notation, and treesh-bones}
\label{sec:background}

We assume that the reader is familiar with the basics of higher computability. A standard references is~\cite{Sacks1990}.

\subsection{Higher effective topology and continuous computation}

We recall the notions of higher effective topology which were introduced in \cite{BGMContinuousHigherRandomness}. Recall that the inspiration for the study of higher computability is the analogy between the classes $\Sigma^0_1$ and $\Pi^1_1$, most lucidly exhibited by the fact that $\Pi^1_1$ subsets of~$\w$ are preciesly the subsets of~$\w$ which are $\Sigma_1$-definable in the structure $(L_{\wock};\in)$; in the language of higher computability, they are $\wock$-c.e.\ sets. Informally, they are enumerated by effective processes which run in $\wock$-many steps. 

This analogy extends to defining sets of reals, inspired by the following equivalence for open sets $\+W\subseteq 2^\w$:
\begin{enumerate}
	\item $\+W$ is $\Pi^1_1$;
	\item $\+W = [W]^\prec$ for a $\Pi^1_1$ set $W\subseteq 2^{<\w}$. 
\end{enumerate}
Here $[W]^\prec = \left\{ X\in 2^\w \,:\, (\exists \s \prec X)\,\,\s\in W  \right\}$ is the set of reals \emph{generated} or \emph{described} by the set of strings~$W$. Continuing the terminology used in \cite{BGMContinuousHigherRandomness}, the $\Pi^1_1$ open sets are the \emph{higher effectively open} sets or \emph{higher c.e.\ open} sets. 

\medskip
   
When we come to discuss relativisations, there are two options for defining, for an oracle $A\in 2^\w$, the notions of higher c.e.\ sets and higher c.e.\ open sets relative to $A$:
\begin{itemize}
	\item sets which are $\Pi^1_1(A)$; 
	\item $A$-sections of $\Pi^1_1$ open sets. 
\end{itemize}
These notions are very different. One of the main points of \cite{BGMContinuousHigherRandomness} is that the latter notion is the one which is useful for us when studying higher randomness and genericity. Here we see that it also allows us to distinguish between different kinds of oracles. For a set $W\subseteq 2^{<\w}\times \w$ and $A\in 2^\w$ let 
\[
	W^A = \left\{ n \,:\,  (\exists \tau\prec A)\,(\tau,n)\in W \right\}.
\]

\begin{definition} \label{def:relative_higher_ce}
	Let $A\in 2^\w$. 
	\begin{enumerate}[label=(\alph*)]
		\item A subset of $\w$ is \emph{higher $A$-c.e.}\ if it is of the form $W^A$ for some  $\Pi^1_1$ set $W\subseteq 2^{<\w}\times \w$
		\item A subset of~$2^\w$ is \emph{higher $A$-c.e.\ open} if it is of the form $[V]^\prec$ for some higher $A$-c.e.\ set $V\subseteq 2^{<\w}$. 
	\end{enumerate}
\end{definition}
Observe that the higher $A$-c.e.\ sets are the $A$-sections of higher c.e.\ open subsets of $2^\w\times \w$, and the higher $A$-c.e.\ open sets are the $A$-sections of higher c.e.\ open subsets of $2^\w\times 2^\w$. A $\Pi^1_1$ set $W\subseteq 2^{<\w}\times \w$ is called a \emph{higher enumeration operator} or a \emph{higher oracle c.e.\ set}. Similarly, If $\+W\subseteq 2^\w\times 2^\w$ is higher effectively open then we also think of it as a \emph{higher oracle c.e.\ open set}; we sometimes use the same terminology to describe a $\Pi^1_1$ set $W\subseteq 2^{<\w}\times 2^{<\w}$ generating~$\+W$. For such a set we write $\+W^A$ for the $A$-section of~$\+W$. 

\smallskip

Using open sets, we can extend higher continuous relativisation up the arithmetic hierarchy. In this paper though we will restrict ourselves to relativising higher effectively open sets and their complements, the higher effectively closed (or co-c.e.) sets. 


\medskip

Having tackled open sets, we can now deal with partial continuous maps. A code for a partial continuous function $F\colon 2^\w\to 2^\w$ is a set $\Phi\subseteq 2^{<\w}\times 2^{<\w}$ such that for $X\in \dom F$, 
\[
	F(X) = \bigcup \left\{ \s \,:\,  (\exists \tau\prec X) \,\,(\tau,\s)\in \Phi \right\}. 
\]
The set~$\Phi$ is often called a \emph{functional}, and we usually abuse notation by using~$\Phi$ to denote both the functional and the induced partial continuous function~$F$. When $\Phi$ is~$\Pi^1_1$, this gives us a higher reduction procedure, so we make the following definition:

\begin{definition} \label{def:higher_relative_computability}
	Let $X,Y\in 2^\w$. We say that $Y$ is \emph{higher $X$-computable}, and write $Y\le_{\wock\Tur} X$, if $Y = \Phi(X)$ for some $\Pi^1_1$ functional~$\Phi$. 
\end{definition}
Note that $Y\le_{\wock\Tur} X$ if and only if the set of final initial segments of~$Y$ is higher $X$-c.e. With this definition, familiar facts about relative computability hold in the higher setting, for example: $Y\le_{\wock\Tur} X$ if and only if both~$Y$ and its complement are higher $X$-c.e. When we think of them as codes for higher partial continuous functions, we refer to $\Pi^1_1$ sets $\Phi\subseteq 2^{<\w}\times 2^{<\w}$ as \emph{higher Turing functionals}.

\subsection{Consistency and uniformity} 
\label{sub:consistency_and_uniformity}

Let $\Phi$ be a higher Turing functional, and let $X\in 2^\w$. There are two possible reasons that $\Phi(X)$ may fail to be an element of $2^\w$:
\begin{enumerate}[label=(\roman*)]
	\item Partiality: $\Phi(X)$ is finite, i.e.\ an element of $2^{<\w}$; or
	\item Inconsistency: $\Phi(X)$ is not a function, which happens if there are $\tau,\tau'$, finite initial segments of~$X$, and incompatible $\s,\s'$ such that $(\tau,\s), (\tau',\s')\in \Phi$. 
\end{enumerate}
It is of course possible that $\Phi$ is consistent and total on some oracles but not on others. \Cref{def:higher_relative_computability} allows all possible functionals. We could define a stronger reducibility, for which we require the functional~$\Phi$ to be consistent. The distinction between the two does not occur when considering c.e.\ functionals: using a time trick, one can easily uniformly transform any c.e.\ functional~$\Phi$ into another c.e.\ functional~$\Psi$ which is consistent everywhere and such that whenever $\Phi(X) = Y$, we also have $\Psi(X) = Y$. This construction uses a time trick, and fails when considering $\Pi^1_1$ functionals. In \cite{BGMContinuousHigherRandomness} we argued that requiring consistent functionals was in most cases too restrictive. We will show (in \cref{section:separation_fin_h_higher_turing}) that there is no way around that: there exists some oracle~$X$ which higher Turing computes some~$Y$, but which does not compute it via any functional which is consistent everywhere.

\subsection{Failure of time tricks: an easy example}

We shall now see a first example where the use of a time trick cannot be removed. Any open set with a $\Sigma^0_1$ description also has a $\Delta^0_1$ description. Indeed, for any $\Sigma^0_1$ set of strings $W$, we can define a $\Delta^0_1$ set $V$ generating~$\+W$ by enumerating~$\s$ in~$V$ at stage~$|\s|$ if some prefix of~$\s$ is enumerated in~$W$ at stage~$\s$.

The proof of the previous paragraph clearly uses a time trick. We shall now see that there are some open sets with a $\Pi^1_1$ description that do not have a~$\Delta^1_1$ description. We start by proving the following:

\begin{proposition} \label{th:prefix_free_failure}
There is a~$\Pi^1_1$ open set which is not generated by any~$\Pi^1_1$ prefix-free set of strings. 
\end{proposition}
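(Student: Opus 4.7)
The plan is to exhibit a $\Pi^1_1$ open set $\U$ whose structure forces any $\Pi^1_1$ prefix-free generator to encode a $\Sigma^1_1$-hard set in a $\Pi^1_1$ way, which is impossible.

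First, fix $H\subseteq\w$ a $\Pi^1_1$-complete set, so that $\w\setminus H$ is not $\Pi^1_1$. For each $n\in\w$ let $\s_n=10^n 1$; the strings $\s_n$ are pairwise incomparable. Define $\U\subseteq\cs$ by letting $X\in\U$ iff there exists $n$ with $X\succeq\s_n$ and either $n\in H$ or $X(k)=1$ for some $k>n+1$. Intuitively, above each $\s_n$ the set $\U$ fills the entire cylinder $[\s_n]$ when $n\in H$, and fills $[\s_n]$ except for the single point $\s_n 0^\w$ when $n\notin H$. Thus $\U$ is a union of basic cylinders, hence open, and is $\Pi^1_1$ because its defining condition is a number-quantified disjunction of a $\Pi^1_1$ predicate (``$n\in H$'') and an open predicate.

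Now assume for a contradiction that $V\subseteq 2^{<\w}$ is $\Pi^1_1$, prefix-free, and satisfies $[V]^\prec=\U$; set $V_n=V\cap\{\s : \s\succeq\s_n\}$. One first checks that no proper prefix $\rho$ of $\s_n$ can lie in $V$, since $\rho 0^\w$ is either $0^\w$ or $10^\w$, neither of which belongs to $\U$, so $[\rho]\not\subseteq\U$. This forces $[V_n]^\prec=\U\cap[\s_n]$. The key dichotomy to establish is: \emph{$V_n$ is finite iff $n\in H$}. For $n\in H$, $\U\cap[\s_n]=[\s_n]$ is compact, so the prefix-free cover $V_n$ is necessarily finite. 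For $n\notin H$, $\U\cap[\s_n]=[\s_n]\setminus\{\s_n 0^\w\}$ is not compact; for each $m\geq 0$, the point $\s_n 0^m 1 0^\w$ has a prefix $\tau_m\in V_n$, and since no string of the form $\s_n 0^j$ can belong to $V$ (by the same reasoning as above, using $n\notin H$), each $\tau_m$ must extend $\s_n 0^m 1$, giving pairwise distinct elements of $V_n$ as $m$ varies.

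The contradiction then follows immediately: ``$V_n$ is infinite'' is expressible as $\forall k\,\exists\s\,(\s\in V\wedge\s\succeq\s_n\wedge|\s|>k)$, which is $\Pi^1_1$ in $n$ since $V$ is $\Pi^1_1$ and $\Pi^1_1$ is closed under number quantifiers; by the dichotomy it defines $\w\setminus H$, contradicting the $\Pi^1_1$-completeness of $H$. The main delicate point is the non-compact case: a priori $V$ could try to cover $[\s_n]\setminus\{\s_n 0^\w\}$ with fewer long strings by refining the canonical antichain $\{\s_n 0^m 1 : m\geq 0\}$, but excluding all the strings $\s_n 0^j$ from $V$ forces any cover to commit at distinct depths for the witnesses $\s_n 0^m 1 0^\w$, which yields the required infinitude.
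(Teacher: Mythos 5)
Your proof is correct, and it takes a genuinely different route from the paper's. The paper diagonalizes directly against an enumeration $\{W_e\}$ of all $\Pi^1_1$ sets of strings: above each designated node $\s_e=0^e1$ it pre-enumerates a ``fish-bone'' gadget $A_e = \{\s_e\cat 0^m1 : m<\w\}$, then uses admissibility of $\wock$ ($\Sigma^1_1$-bounding) to find a stage $s<\wock$ at which $[W_{e,s}]^\prec$ already covers $[A_e]^\prec$; depending on whether a prefix of the spine $\s_e\cat 0^\w$ has appeared by stage $s$, it either dumps $\s_e$ into $V$ to separate from $W_e$, or argues that any later string in $W_e$ covering the spine must have an extension already present in $W_{e,s}$, violating prefix-freeness. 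Your argument is instead a pure definability reduction: you embed a $\Pi^1_1$-complete set $H$ into the geometry of a single $\Pi^1_1$ open set $\U$ by arranging $\U\cap[\s_n]$ to be compact exactly when $n\in H$, and then observe that a prefix-free $\Pi^1_1$ generator $V$ would make ``$V_n$ is infinite'' (hence $n\notin H$) a $\Pi^1_1$ predicate, contradicting $\Pi^1_1$-completeness of $H$. Both proofs hinge on the same compactness observation (a prefix-free cover of a compact set is finite and a generator of a non-compact cylinder-minus-a-point must commit at unboundedly many depths once the spine is excluded), but you dispense entirely with the $\wock$-stagewise bookkeeping and admissibility in favour of the non-self-duality of $\Pi^1_1$, yielding a shorter, more self-contained proof. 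What the paper's construction buys is thematic: its fish-bone is deliberately set up as the warm-up for the treesh-bone machinery used throughout the rest of the paper, so it is an illustration of ``time tricks failing'' rather than the most economical proof; your version does not seed that later machinery, but as a standalone proof of this proposition it is cleaner.
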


\begin{proof}
Let $\{W_e\}_{e \in \omega}$ be a list of all $\Pi^1_1$ set of strings. Let $\{\s_e\}_{e \in \omega}$ be an infinite sequence of pairwise incomparable strings (for example let $\s_e = 0^e1$). Working in $L_\wock$, we enumerate a~$\Pi^1_1$ set of strings~$V$ such that for all~$e$, if $W_e$ is prefix-free, then $[V]^\prec \neq [W_e]^\prec$.

For each~$e$ we let~$A_e$ be a set of strings extending~$\s_e$ which is dense along $\s_e \cat 0^\infty$ but which contains no prefix of $\s_e \cat 0^\infty$, for example $A_e = \left\{ \s_e\cat 0^n1 \,:\, n<\w  \right\}$. At stage~$0$ of our construction we start with $V_0 = \bigcup_e A_e$.  Then for any stage~$s$, and substage~$e$, we check if both $[A_e]^\prec \subseteq [W_{e, s}]^\prec$ and $\s_e \cat 0^\infty \notin [W_{e, s}]^\prec$. If so, then we enumerate~$\s_e$ into~$V$ at stage~$s$.

We now claim that if $W_e$ is prefix-free, then $[V]^\prec \neq [W_e]^\prec$. If $[V]^\prec =  [W_e]^\prec$, in particular we have $[A_e]^\prec \subseteq  [W_e]^\prec$. If so, then by compactness, for each string $\tau$ in $A_e$, there are only finitely many strings in $W_e$ whose union of corresponding cylinders covers $[\tau]$. Also by admissibility of~$\wock$ (equivalently, the $\Sigma^1_1$-bounding principle), as~$A_e$ is computable, there is a stage $s < \wock$ at which we already have $[A_e]^\prec \subseteq  [W_{e, s}]^\prec$; let~$s$ be the least such stage. By construction, if $\s_e \cat 0^\infty \in [W_{e, s}]^\prec$ then $\s_e \cat 0^\infty \notin [V]$, in which case the point $\s_e\cat 0^\infty$ shows that $[V]^\prec \neq [W_e]^\prec$.

On the other hand, if at stage $s$ we have $\s_e \cat 0^\infty \notin [W_{e, s}]^\prec$, then $\s_e$ is enumerated in $V$ at stage $s$. If $\s_e \cat 0^\infty \notin [W_{e}]^\prec$ then again we get $[V]^\prec \neq [W_e]^\prec$. Otherwise, a prefix $\tau$ of $\s_e \cat 0^\infty$ will be enumerated into $W_{e}$ after stage $s$. But then, as already at stage~$s$ we have that $[W_{e}]^\prec$ covers $[A_e]^\prec$ without containing $\s_e \cat 0^\infty$, and as~$A_e$ is dense along $\s_e \cat 0^\infty$, there is necessarily an extension of~$\tau$ which is already in~$W_e$ at stage $s$. Therefore~$W_e$ is not prefix-free.
\end{proof}

\begin{corollary}
There is a $\Pi^1_1$-open set which is not generated by any $\Delta^1_1$ set of strings.
\end{corollary}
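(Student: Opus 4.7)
The plan is to derive the corollary from \cref{th:prefix_free_failure} by showing that any open set generated by a $\Delta^1_1$ set of strings is in fact generated by some $\Pi^1_1$ (in fact $\Delta^1_1$) prefix-free set of strings. Taking the contrapositive then transfers the failure established in the proposition to the $\Delta^1_1$ setting.

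Concretely, let $\+W$ be the $\Pi^1_1$ open set furnished by \cref{th:prefix_free_failure}, and suppose toward a contradiction that $\+W = [V]^\prec$ for some $\Delta^1_1$ set $V \subseteq 2^{<\w}$. I would then form the set of minimal elements of~$V$,
\[
V' = \left\{ \s \in V \,:\, (\forall \tau \prec \s)\,\, \tau \notin V \right\}.
\]
Since the quantifier ranges only over the finitely many proper prefixes of~$\s$, the set $V'$ is $\Delta^1_1$ (and in particular $\Pi^1_1$). By construction $V'$ is prefix-free, and a routine check shows $[V']^\prec = [V]^\prec$: the inclusion $\subseteq$ is immediate, and conversely if $X \in [V]^\prec$ then the shortest prefix of~$X$ lying in~$V$ is automatically in~$V'$.

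This produces a $\Pi^1_1$ prefix-free set of strings generating $\+W$, contradicting the conclusion of \cref{th:prefix_free_failure}. There is no real obstacle here, as the only nontrivial ingredient, namely the existence of the open set~$\+W$, has already been established; the corollary is essentially a packaging of the proposition through the observation that prefix-freeness can be extracted from a $\Delta^1_1$ description without using a time trick.
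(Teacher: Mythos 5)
Your argument is correct and is essentially the same as the paper's: both take the set of minimal strings in the given $\Delta^1_1$ set, observe that it is $\Delta^1_1$, prefix-free, and generates the same open set, and then invoke \cref{th:prefix_free_failure}. You have merely spelled out the routine checks that the paper leaves implicit.
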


\begin{proof}
For any $\Delta^1_1$ set of strings $W$ there is a prefix-free $\Delta^1_1$ set of strings~$V$ with $[W]^\prec = [V]^\prec$, namely the set of minimal strings in~$W$. 
\end{proof}

\subsection{Treesh-bones: motivation}

To motivate the construction framework that we will soon describe, we consider, informally, how we would go about showing that there are $Y\le_{\wock\Tur} X$ such that for no everywhere consistent functional do we have $\Phi(X)=Y$. The idea is similar to the proof of \cref{th:prefix_free_failure}. Let~$\Phi$ be a consistent functional that we want to defeat. Consider the standard ``fish-bone'' with spine $0^\infty$ and ribs $0^n1$ for $n<\w$. We design our functional~$\Psi$. To start, we let~$\Psi$ map the string $1$ to $1$, and declare that currently, $X$ extends $1$. Now we wait for the opponent, playing~$\Phi$, to respond. There are three possibilities:
\begin{enumerate}
	\item The opponent never responds: there is no~$\tau$, compatible with $1$, which is mapped by~$\Phi$ to $1$ (or some extension of $1$). In this case we declare that $X$ and~$Y$ both extend~$1$, and win against~$\Phi$. We can proceed to defeat other consistent functionals. 
	\item The opponent responds by mapping the empty string to $1$: in this case we win by declaring that both~$X$ and~$Y$ extend~$0$. 
	\item The opponent maps some extension of $1$ to $1$. In this case we declare that for now, $X$ extends the string~$01$, and map~$01$ to~$1$; we repeat. 
\end{enumerate}
If the third outcome keeps occurring, then we successivley map each $0^n1$ to $1$, and the opponent maps $0^n1$ (or some extension) to 1. The opponent's response is a~$\Sigma_1$ event; admissibility of~$\wock$ implies that some stage $s<\wock$ bounds the time of all these responses. When we get to that stage -- when we see that for all~$n$, the opponent mapped some extension of $0^n1$ to $1$ -- we can declare that $X = 0^\infty$ and that~$Y$ extends 0. We get to map the empty string to 0, because~$\Psi$ is not required to be conistent, only consistent on~$X$. The opponent has no such recourse; he cannot map any initial segment $0^n$ to 0, as each has an extension which is mapped to 1. Thus we defeat~$\Phi$. 

The problem, of course, is that by that stage we have commited to $X = 0^\infty$ and have no room to move to defeat the next functional. The idea of the treesh-bone is to make sure that we can carry out this construction, but when the interesting outcome occurs, we are left not with just a spine but with a full binary tree on which we can play the same game with the next functional.

\subsection{The perfect treesh-bone}
\label{subsec:treesh-bone}

We now present a general framework, that will be used for several of this paper's constructions.

For a tree $T \subseteq 2^{<\omega}$, recall that $\s \in T$ is a branching node of $T$ if $\s \cat 0$ and $\s \cat 1$ are both in $T$, and recall that the stem of $T$, denoted by $\stem(T)$, is the shortest branching node of $T$. Given~$T$, we want to obtain both a prefect subtree, which we call $\Nar(T)$, together with countably many nodes $\{\s_i\}_{i \in \omega}$ of~$T$ which do not belong to $\Nar(T)$, but which are dense along any path of $\Nar(T)$. We now formally describe how we achieve this.

Let $T \subseteq 2^{<\omega}$ be perfect. Let $\psi_T\colon 2^{<\omega}\to T$ be the order-preserving map whose range is the collection of branching nodes of~$T$. That is, $\psi_T(\epsilon) = \stem(T)$ (the shortest branching node of~$T$) and for all $\s\in 2^{<\omega}$ and $i \in \{0,1\}$, $\psi_T(\s \cat i)$ is the next branching node in $T$ above $\psi_T(\s) \cat i$. 

Let $\s_0(T), \s_1(T), \dots$ be an enumeration of all strings of the form $\psi_T(\s \cat 1)$ for strings $\s$ of \emph{odd} length such that for all even $k<|\s|$, $\s(k)=0$. That is, the nodes $\s_k(T)$ are the minimal nodes on~$T$ of the form $\psi_T(\s\cat 1)$ for~$\s$ of odd length. We let $\Nar(T)$ (the narrow subtree of~$T$) be the result of removing the strings~$\s_k(T)$ from~$T$ (and keeping a perfect subtree with no dead ends). Also for any~$k$ we let $T\ldbrack k \rdbrack$ denote $T\rest {\s_k(T)}$, that is, the collection of strings of~$T$ comparable with $\s_k(T)$ (also known as the full subtree of~$T$ issuing from $\s_k(T)$). \Cref{fig:treesh-bone} illustrate these definitions. 
\begin{figure}[h]
\centering

\begin{tikzpicture}[xscale=0.11, yscale=.07]
\draw[thick,color=red] (50.00,0.0) -- (25.00,13.00);
\draw[thick,color=red] (50.00,0.0) -- (75.00,13.00);
\draw[thick,color=red] (25.00,13.0) -- (25.00,26.00);
\draw[thick,color=blue,line width=.5mm] (25.00,13.0) -- (37.50,26.00);
\draw[thick,color=red] (75.00,13.0) -- (75.00,26.00);
\draw[thick,color=blue,line width=.5mm] (75.00,13.0) -- (87.50,26.00);
\draw[thick,color=red] (25.00,26.0) -- (12.50,39.00);
\draw[thick,color=red] (25.00,26.0) -- (37.50,39.00);
\draw[thick,color=red] (75.00,26.0) -- (62.50,39.00);
\draw[thick,color=red] (75.00,26.0) -- (87.50,39.00);
\draw[thick,color=red] (12.50,39.0) -- (12.50,52.00);
\draw[thick,color=blue,line width=.5mm] (12.50,39.0) -- (18.75,52.00);
\draw[thick,color=red] (37.50,39.0) -- (37.50,52.00);
\draw[thick,color=blue,line width=.5mm] (37.50,39.0) -- (43.75,52.00);
\draw[thick,color=red] (62.50,39.0) -- (62.50,52.00);
\draw[thick,color=blue,line width=.5mm] (62.50,39.0) -- (68.75,52.00);
\draw[thick,color=red] (87.50,39.0) -- (87.50,52.00);
\draw[thick,color=blue,line width=.5mm] (87.50,39.0) -- (93.75,52.00);
\draw[thick,color=red] (12.50,52.0) -- (6.25,65.00);
\draw[thick,color=red] (12.50,52.0) -- (18.75,65.00);
\draw[thick,color=red] (37.50,52.0) -- (31.25,65.00);
\draw[thick,color=red] (37.50,52.0) -- (43.75,65.00);
\draw[thick,color=red] (62.50,52.0) -- (56.25,65.00);
\draw[thick,color=red] (62.50,52.0) -- (68.75,65.00);
\draw[thick,color=red] (87.50,52.0) -- (81.25,65.00);
\draw[thick,color=red] (87.50,52.0) -- (93.75,65.00);
\draw[thick,color=red] (6.25,65.0) -- (6.25,78.00);
\draw[thick,color=blue,line width=.5mm] (6.25,65.0) -- (9.38,78.00);
\draw[thick,color=red] (18.75,65.0) -- (18.75,78.00);
\draw[thick,color=blue,line width=.5mm] (18.75,65.0) -- (21.88,78.00);
\draw[thick,color=red] (31.25,65.0) -- (31.25,78.00);
\draw[thick,color=blue,line width=.5mm] (31.25,65.0) -- (34.38,78.00);
\draw[thick,color=red] (43.75,65.0) -- (43.75,78.00);
\draw[thick,color=blue,line width=.5mm] (43.75,65.0) -- (46.88,78.00);
\draw[thick,color=red] (56.25,65.0) -- (56.25,78.00);
\draw[thick,color=blue,line width=.5mm] (56.25,65.0) -- (59.38,78.00);
\draw[thick,color=red] (68.75,65.0) -- (68.75,78.00);
\draw[thick,color=blue,line width=.5mm] (68.75,65.0) -- (71.88,78.00);
\draw[thick,color=red] (81.25,65.0) -- (81.25,78.00);
\draw[thick,color=blue,line width=.5mm] (81.25,65.0) -- (84.38,78.00);
\draw[thick,color=red] (93.75,65.0) -- (93.75,78.00);
\draw[thick,color=blue,line width=.5mm] (93.75,65.0) -- (96.88,78.00);
\draw[thick,color=red] (6.25,78.0) -- (3.13,91.00);
\draw[thick,color=red] (6.25,78.0) -- (9.38,91.00);
\draw[thick,color=red] (18.75,78.0) -- (15.63,91.00);
\draw[thick,color=red] (18.75,78.0) -- (21.88,91.00);
\draw[thick,color=red] (31.25,78.0) -- (28.13,91.00);
\draw[thick,color=red] (31.25,78.0) -- (34.38,91.00);
\draw[thick,color=red] (43.75,78.0) -- (40.63,91.00);
\draw[thick,color=red] (43.75,78.0) -- (46.88,91.00);
\draw[thick,color=red] (56.25,78.0) -- (53.13,91.00);
\draw[thick,color=red] (56.25,78.0) -- (59.38,91.00);
\draw[thick,color=red] (68.75,78.0) -- (65.63,91.00);
\draw[thick,color=red] (68.75,78.0) -- (71.88,91.00);
\draw[thick,color=red] (81.25,78.0) -- (78.13,91.00);
\draw[thick,color=red] (81.25,78.0) -- (84.38,91.00);
\draw[thick,color=red] (93.75,78.0) -- (90.63,91.00);
\draw[thick,color=red] (93.75,78.0) -- (96.88,91.00);
\draw[thick,color=red] (3.13,91.0) -- (3.13,104.00);
\draw[thick,color=blue,line width=.5mm] (3.13,91.0) -- (4.69,104.00);
\draw[thick,color=red] (9.38,91.0) -- (9.38,104.00);
\draw[thick,color=blue,line width=.5mm] (9.38,91.0) -- (10.94,104.00);
\draw[thick,color=red] (15.63,91.0) -- (15.63,104.00);
\draw[thick,color=blue,line width=.5mm] (15.63,91.0) -- (17.19,104.00);
\draw[thick,color=red] (21.88,91.0) -- (21.88,104.00);
\draw[thick,color=blue,line width=.5mm] (21.88,91.0) -- (23.44,104.00);
\draw[thick,color=red] (28.13,91.0) -- (28.13,104.00);
\draw[thick,color=blue,line width=.5mm] (28.13,91.0) -- (29.69,104.00);
\draw[thick,color=red] (34.38,91.0) -- (34.38,104.00);
\draw[thick,color=blue,line width=.5mm] (34.38,91.0) -- (35.94,104.00);
\draw[thick,color=red] (40.63,91.0) -- (40.63,104.00);
\draw[thick,color=blue,line width=.5mm] (40.63,91.0) -- (42.19,104.00);
\draw[thick,color=red] (46.88,91.0) -- (46.88,104.00);
\draw[thick,color=blue,line width=.5mm] (46.88,91.0) -- (48.44,104.00);
\draw[thick,color=red] (53.13,91.0) -- (53.13,104.00);
\draw[thick,color=blue,line width=.5mm] (53.13,91.0) -- (54.69,104.00);
\draw[thick,color=red] (59.38,91.0) -- (59.38,104.00);
\draw[thick,color=blue,line width=.5mm] (59.38,91.0) -- (60.94,104.00);
\draw[thick,color=red] (65.63,91.0) -- (65.63,104.00);
\draw[thick,color=blue,line width=.5mm] (65.63,91.0) -- (67.19,104.00);
\draw[thick,color=red] (71.88,91.0) -- (71.88,104.00);
\draw[thick,color=blue,line width=.5mm] (71.88,91.0) -- (73.44,104.00);
\draw[thick,color=red] (78.13,91.0) -- (78.13,104.00);
\draw[thick,color=blue,line width=.5mm] (78.13,91.0) -- (79.69,104.00);
\draw[thick,color=red] (84.38,91.0) -- (84.38,104.00);
\draw[thick,color=blue,line width=.5mm] (84.38,91.0) -- (85.94,104.00);
\draw[thick,color=red] (90.63,91.0) -- (90.63,104.00);
\draw[thick,color=blue,line width=.5mm] (90.63,91.0) -- (92.19,104.00);
\draw[thick,color=red] (96.88,91.0) -- (96.88,104.00);
\draw[thick,color=blue,line width=.5mm] (96.88,91.0) -- (98.44,104.00);
\end{tikzpicture}

\caption{The treeshbone. The \textcolor{blue}{blue nodes} correspond to nodes $\s_k(T)$. The \textcolor{red}{red subtree} corresponds to $\Nar(T)$.}
\label{fig:treesh-bone}
\end{figure}
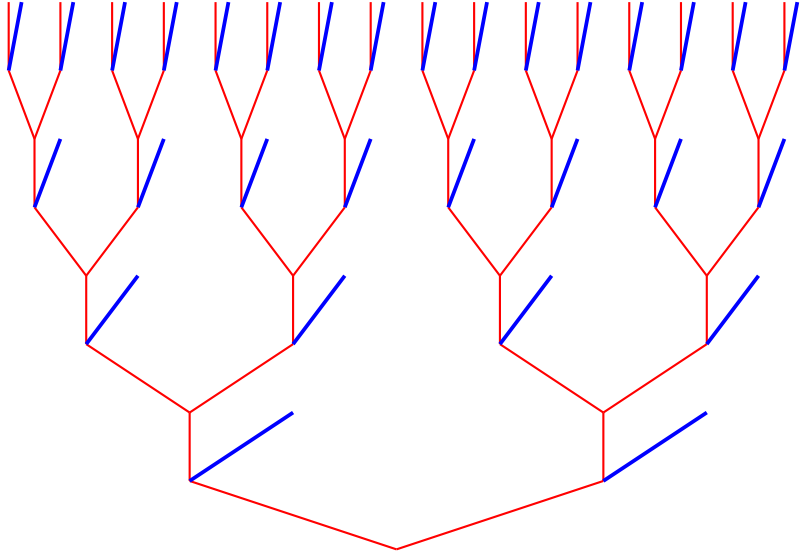

Let us make a few remarks which will be widely used without explicit mention in what follows.

\begin{enumerate}[label=(\alph*)]
\item For any tree $T$ and any $k$ we have $\s_k(T) = \stem(T \ldbrack k \rdbrack)$;
\item For any tree $T$ and any $k$ we have $\stem(T) \prec \stem(T \ldbrack k \rdbrack)$;
\item For any tree $T$ we have $\stem(T) = \stem(\Nar(T))$.
\end{enumerate}

\subsubsection{The tree of trees} \label{subsec:tree_of_trees}

For constructions in this paper, we will work within what can be considered a ``tree of trees''. We define a subset $\T$ of the set of computable trees $T\subseteq 2^{<\w}$. This is the set generated by starting with the full tree~$2^{<\w}$ and closing under taking narrow subtrees and the full subtrees $T\dbrak{k}$ for $k<\w$. For two trees $T_1,T_2$ in~$\T$, we say that $T_2$ extends $T_1$ (and write $T_1 \preceq T_2$) if $T_2 \subseteq T_1$. In addition if $T_1 \neq T_2$ we write $T_1 \prec T_2$. We illustrate the tree of trees by the following picture:

\begin{figure}[H]

\centering

\begin{tikzpicture}[xscale=0.11, yscale=.07]
\node (T) at (50,0) {$T$};
\node (T1) at (25,20) {$\Nar(T)$};
\node (T2) at (75,20) {$T \ldbrack i \rdbrack...$};

\node (T11) at (12.5,40) {$\Nar()$};
\node (T12) at (37.5,40) {$\ldbrack i \rdbrack...$};
\node (T21) at (62.5,40) {$\Nar()$};
\node (T22) at (87.5,40) {$\ldbrack i \rdbrack...$};

\node (T111) at (6.25,60) {$\Nar()$};
\node (T112) at (18.75,60) {$\ldbrack i \rdbrack...$};
\node (T121) at (31.25,60) {$\Nar()$};
\node (T122) at (43.75,60) {$\ldbrack i \rdbrack...$};
\node (T211) at (56.25,60) {$\Nar()$};
\node (T212) at (68.75,60) {$\ldbrack i \rdbrack...$};
\node (T221) at (81.25,60) {$\Nar()$};
\node (T222) at (93.75,60) {$\ldbrack i \rdbrack...$};

\draw[thick] (T) -- (T1);
\draw[thick] (T) -- (72,16);
\draw[thick] (T) -- (74,16);
\draw[thick] (T) -- (76,16);


\draw[thick] (T1) -- (T11);
\draw[thick] (T1) -- (35,36);
\draw[thick] (T1) -- (37,36);
\draw[thick] (T1) -- (39,36);

\draw[thick] (T2) -- (T21);
\draw[thick] (T2) -- (84,36);
\draw[thick] (T2) -- (86,36);
\draw[thick] (T2) -- (88,36);


\draw[thick] (T11) -- (T111);
\draw[thick] (T11) -- (16.75,56);
\draw[thick] (T11) -- (18.75,56);
\draw[thick] (T11) -- (20.75,56);

\draw[thick] (T12) -- (T121);
\draw[thick] (T12) -- (41.75,56);
\draw[thick] (T12) -- (43.75,56);
\draw[thick] (T12) -- (45.75,56);

\draw[thick] (T21) -- (T211);
\draw[thick] (T21) -- (66.75,56);
\draw[thick] (T21) -- (68.75,56);
\draw[thick] (T21) -- (70.75,56);

\draw[thick] (T22) -- (T221);
\draw[thick] (T22) -- (91.75,56);
\draw[thick] (T22) -- (93.75,56);
\draw[thick] (T22) -- (95.75,56);
\end{tikzpicture}

\caption{The tree of trees.}
\end{figure}
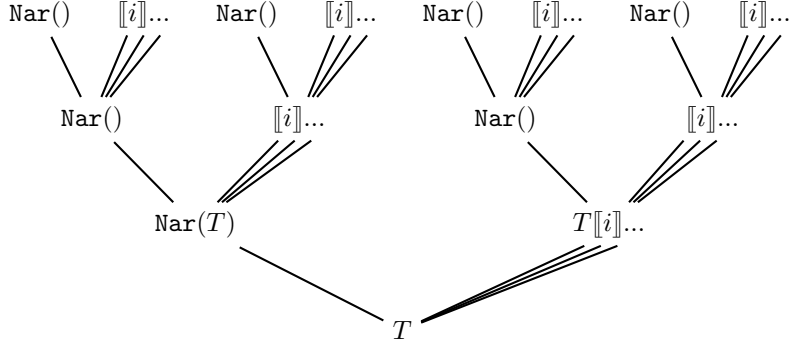

Note that $T_2$ extends~$T_1$ exactly when~$T_2$ is obtained from~$T_1$ by applying finitely many operations of taking the narrow subtree or a full subtree $T\dbrak{k}$. If~$T_1$ and~$T_2$ are incomparable in~$\+T$, then the closed sets $[T_1]$ and~$[T_2]$ are disjoint. Indeed, for any~$T\in \+T$, the collection of $[S]$ for the immediate successors~$S$ of~$T$ on~$\+T$ forms a partition of~$[T]$. 

A \emph{path} in the tree of trees is a sequence $\seq{T_n}$ such that $T_0  = 2^{<\w}$ and for all~$n$, $T_{n+1}$ is an immediate successor of~$T_n$ on~$\+T$. If from some point onwards, the path only takes narrow subtrees -- for some~$n$, for all $m>n$, $T_{m+1} = \Nar(T_m)$ -- then $\bigcap_n T_n$ is a hyperarithmetic perfect set. Otherwise, $\bigcap_n [T_n]$ is a singleton $\{A\}$ and $A = \bigcup_n \stem(T_n)$. In this case we call the path $\seq{T_n}$ \emph{shrinking}.

\subsection{Approximating a sequence of trees} 

A bad oracle we construct will be in the intersection $\bigcap [T_n]$ determined by a path $\seq{T_n}$ in~$\+T$. 
The path $\seq{T_n}$ will usually be $O$-computable, where by~$O$ we denote any complete $\Pi^1_1$ set of natural numbers (usually referred to as \emph{Kleene's $O$}). To show that~$A$ computes some objects, we will need to build higher functionals, which means that we will need to perform some $\wock$-computable construction, rather than directly appeal to the oracle~$O$. Thus in our construction we will approximate the sequence $\seq{T_n}$. 

Here we rely on the following characterisation of $O$-computable sets --  the higher analogue of Shoenfield's limit lemma, which was investigated in \cite{BGMContinuousHigherRandomness}. The following are equivalent for $X\in 2^\w$:
\begin{enumerate}
	\item $X\le_\Tur O$;
	\item $X\le_{\wock \Tur} O$; 
	\item $X$ has a $\wock$-computable approximation: a $\wock$-computable sequence $\seq{X_s}_{s<\wock}$ which converges to~$X$ in the sense that for all $k<\w$, $\left\{ s<\wock \,:\, X_s(k)\ne X(k)  \right\}$ is bounded below~$\wock$. 
\end{enumerate}
We call such reals~$X$ \emph{higher $\Delta^0_2$}. 

\smallskip

Thus, during our construction we will, at every stage~$s<\wock$, build a sequence $\seq{T_{n,s}}_{n<\w}$ -- a path through the tree of trees -- and we will ensure that for each~$n$, the sequence~$\seq{T_{n,s}}_{s<\wock}$ will eventually stabilise. 

\smallskip

The question, though, is \emph{why} the sequence of trees will indeed stabilise to a desired sequence $\seq{T_n}$. Here the dynamics of the construction matter, and we will use the admissibility of~$\wock$. A typical argument will go as follows. Suppose that for all $s\ge s_0$ we have $T_{n,s} = T_n$. Then we ensure that for $t>s\ge s_0$,
\begin{itemize}
	\item if $T_{n+1,t} = T_n\dbrak{k}$ for some~$k<\w$, then $T_{n+1,s} = T_n\dbrak{m}$ for some $m\le k$; and 
	\item if $T_{n+1,s} = \Nar(T_n)$ then $T_{n+1,t} = \Nar(T_n)$ as well. 
\end{itemize}
So once we choose $\Nar(T_n)$, we are guaranteed stabilisation. Admissibility ensures that if for unboundedly many~$k$ we at some point choose $T_{n+1,s} = T_{n}\dbrak{k}$, then there will be some $t<\wock$ by which we will have already seen the unboundedly many~$k$ chosen; we are then forced to choose $T_{n+1,t} = \Nar(T_n)$.

\subsection{Left-c.e.\ strategies} 
\label{subsec:formalising_the_argument}

We describe a way to formalise and generalise this argument. This generalisation will have two parts:
\begin{itemize}
	\item Defining a mapping from \emph{strategies} to trees; and
	\item Approximating a \emph{higher left-c.e.} path of strategies.   
\end{itemize}

\subsubsection{Mapping strategies to trees} 

In most of our constructions we will fix some $\alpha_*<\wock$, usually $\alpha_*=\w+1$ and sometimes $\alpha_* = \w+2$. A \emph{strategy} will be an element of $(\alpha_*)^{<\w}$. We will then assign strategies to trees. For example, in the simpler constructions, we will have $\alpha_* = \w+1$; the outcome~$k$ will corrspond to the operation $T\mapsto T\dbrak{k}$, while the outcome~$\w$ will correspond to the operation $T\mapsto \Nar(T)$. We will then define~$T_\alpha\in \+T$ for a strategy~$T$ by induction on $|\alpha|$: $T_{\epsilon} = 2^{<\w}$, $T_{\alpha\cat k} = T_\alpha\dbrak{k}$, and $T_{\alpha\cat \w} = \Nar(T_\alpha)$. 

Then, at every stage~$s$ of the construction, we will define a path $\xi_s\in (\alpha_*)^\w$ of strategies; $\{\xi_s\rest{n}\,:\, n<\w\}$ will be the collection of strategies accessible at stage~$s$. Then of course we define $T_{n,s} = T_{\xi_s\rest{n}}$.

\subsubsection{The space $(\wock)^\w$} 

The oracles we deal with are elements of Cantor space $2^\w$. In standard computability though we often extend our notions of computability to elements of Baire space $\w^\w$. In the same way that we handle functionals which define maps from Cantor space to itself, we can have c.e.\ functionals which contain pairs $(\s,\tau)$ of finite sequences of natural numbers. These functionals define partial maps from Baire space to itself. 

In higher computability, exactly the same idea can be extended to the space $(\wock)^\w$. In our definitions of higher Turing functionals, for the sake of continuity, it is important that the maps are determined by \emph{finite} initial segments, and so that the objects on which the continuous maps are defined must have length~$\w$. However it is not that important that the \emph{entries} of these objects are binary digits, or natural numbers. We can allow functionals which are $\wock$-c.e.\ subsets of $(\wock)^{<\w}\times (\wock)^{<\w}$, and they define partial maps from $(\wock)^\w$ to itself. Thus, we extend the relation $\le_{\wock\Tur}$ to $\w$-sequences of computable ordinals. This was done in \cite{BGMContinuousHigherRandomness} when analysing the notion $\MLR[O]$, the partial relativisation of ML-randomness to Kleene's~$O$.

\subsubsection{Higher left-c.e.\ sequences} 

The higher limit lemma extends to show that for $X\in (\wock)^\w$, $X\le_{\wock\Tur} O$ if and only if there is a $\wock$-computable sequence $\seq{X_s}_{s<\wock}$ with each $X_s\in (\wock)^\w$ and $\seq{X_s}$ converging to~$X$ in that for all~$n$, $\left\{ s<\wock \,:\,  X_s(n)\ne X(n) \right\}$ is bounded below~$\wock$; so we call such sequences higher~$\Delta^0_2$ as well.  

In \cite{BGMContinuousHigherRandomness}, several subclasses of the higher~$\Delta^0_2$ reals have been analysed. Each subclass is determined by putting restrictions on the type of $\wock$-computable approximations for elements in the subclass. For example, we can require that on each input there are only finitely many mind-changes. A prominent subclass is that of the \emph{higher left-c.e.} sequences. Just like Cantor space and Baire space, the natural ordering on~$\wock$ induces the lexicographic ordering on $(\wock)^\w$. The following are equivalent for $X\in (\wock)^\w$: 
\begin{enumerate}
	\item $\left\{ \s\in (\wock)^{<\w} \,:\,  \s<X \right\}$ is $\wock$-c.e.; 
	\item $X$ has a $\wock$-computable approximation $\seq{X_s}$ with $X_s\le X_t$ when $s<t$. 
\end{enumerate}
Such sequences are called higher left-c.e. The implication (1)$\to$(2) uses admissibility, namely, for all~$n$, all $\s<X$ of length~$n$ are enumerated by some stage. Generally, if we are given a lexicographically increasing sequence $\seq{X_s}$ there is no guarantee that it converges to some sequence; but if the~$X_s$ are uniformly bounded then we do get convergence. Namely:
\begin{itemize}
	\item[(*)] if $\alpha_*<\wock$, and $\seq{X_s}_{s<\wock}$ is a $\wock$-computable sequence with each $X_s\in (\alpha_*)^\w$ and $X_s\le X_t$ (lexicographically) when $s<t$, then the sequence $\seq{X_s}$ converges to some $X\in (\alpha_*)^\w$. 
\end{itemize}
The proof, as in the argument above that the trees $T_{n,s}$ stabilise, is by admissibility; once $\seq{X_s\rest{n}}$ has stabilised, $\seq{X_s(n)}$ is a non-decreasing sequence of ordinals $<\alpha_*$, and so must stabilise.

\medskip

Thus, when conducting a bad oracle construction, we will choose paths $\seq{\xi_s}$ of strategies with $\xi_s\le \xi_t$ for $s\le t$; as a result, we will be guaranteed that these paths converge to a ``true path'' $\xi = \xi_\wock$. In turn, this will mean that for each~$n$, the sequence of trees $T_{n,s} = T_{\xi_s\rest{n}}$ will converge to $T_n = T_{\xi\rest{n}}$.

\subsection{Computing with the bad oracles} 
\label{subsec:computing_with_the_bad_oracle}

As mentioned, in our construction we will choose $A\in \bigcap_n [T_n]$ (which will usually be unique). We will then sometimes want to show that~$A$ computes some object~$X$. The natural way to do this will be to define a higher Turing functional~$\Psi$ mapping, for each $n$ and~$s$, $\stem(T_{n,s})$ to some finite initial segment, say $X_s\rest{n}$ for example, where $\seq{X_s}$ is an approximation for~$X$. To show that $\Psi(A)=X$, the main task will be showing that $\Psi(A)$ is consistent. In this simple example, this amounts to showing that if $\stem(T_{n,s})\prec A$  then $X_s\rest{n}\prec X$. 

Now the mapping of strategies to trees will usually preserve order and non-order; for strategies~$\alpha$ and~$\beta$, if $\alpha\prec \beta$ then $T_\alpha\prec T_\beta$ but if $\alpha$ and~$\beta$ are incomparable then $T_\alpha$ and~$T_\beta$ will be incomparable in~$\+T$. Recall that this means that $[T_\alpha]$ and $[T_\beta]$ are disjoint, and so if $\alpha\nprec \xi$ then $A\notin [T_\alpha]$. It seems that this is enough to ensure consistency of~$\Psi(A)$: all we need to ensure is that if $\alpha\prec \xi$ then $X_\alpha \prec X$ (where $X_\alpha$ is the initial segment of~$X$ determined by the strategy~$\alpha$). 

However, we need to remember that we are dealing with continuous reductions, which is why we used $\stem(T_\alpha)$ (rather than the paths of $T_\alpha$) as sufficient information to compute $X_\alpha$. While we have $\alpha\preceq \beta$ implying $\stem(T_\alpha)\preceq \stem(T_\beta)$, it is \emph{not} the case that if $\alpha$ and~$\beta$ are incomparable then $\stem(T_\alpha)$ and $\stem(T_\beta)$ are incomparable. This is mostly the case, but $\stem(\Nar(T))= \stem(T)$, which causes complications. 

The argument that~$\Psi$ is consistent will therefore be more delicate. It will rely on the following simple fact:

\begin{fact} \label{fact:small_tool1}
Let $S,T\in \+T$. If $S\succeq T\dbrak{k}$ for some~$k$ then $\stem(S)\notin \Nar(T)$ (and so $\stem(S)\npreceq \s$ for all $\s\in \Nar(T)$). 
\end{fact}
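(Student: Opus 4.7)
The plan is to chase the stems through the operations that build $S$ from $T\dbrak{k}$, and then use that $\Nar(T)$ is a tree (closed under prefixes) together with the fact that $\s_k(T)$ has been explicitly removed from it.

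First I would unpack what $S \succeq T\dbrak{k}$ means in $\+T$: by definition of the tree of trees, $S$ is obtained from $T\dbrak{k}$ by a finite (possibly empty) sequence of operations, each either $T'\mapsto \Nar(T')$ or $T' \mapsto T'\dbrak{j}$ for some $j<\w$. Using the three remarks recorded right after the definition of the treesh-bone, the stem behaves monotonically under these operations: $\stem(\Nar(T')) = \stem(T')$ (remark (c)) and $\stem(T') \prec \stem(T'\dbrak{j})$ (remark (b)). Composing, one concludes $\stem(T\dbrak{k}) \preceq \stem(S)$, i.e.\ $\stem(S) \succeq \s_k(T)$ by remark (a).

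Next I would invoke the definition of $\Nar(T)$: the nodes $\s_k(T)$ for $k<\w$ are exactly the ones removed from $T$, so in particular $\s_k(T) \notin \Nar(T)$. Since $\Nar(T)$ is a tree (closed under prefixes), if any extension $\tau \succeq \s_k(T)$ were in $\Nar(T)$ then its prefix $\s_k(T)$ would be as well, a contradiction. Applying this to $\tau = \stem(S)$ gives $\stem(S)\notin \Nar(T)$.

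Finally, the parenthetical ``and so $\stem(S)\npreceq \s$ for all $\s\in \Nar(T)$'' is immediate by the same prefix-closure: if $\stem(S) \preceq \s$ for some $\s \in \Nar(T)$ then $\stem(S) \in \Nar(T)$, contradicting what was just shown. There is no real obstacle here; the only small thing to be careful about is the degenerate case where $S = T\dbrak{k}$ (no operations applied), in which case $\stem(S) = \s_k(T)$ itself, which is already outside $\Nar(T)$ by construction.
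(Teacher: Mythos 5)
Your proof is correct and follows the same approach as the paper's one-line justification: reduce to showing $\s_k(T)=\stem(T\dbrak{k})\preceq\stem(S)$, note $\s_k(T)\notin\Nar(T)$, and conclude by prefix-closure of $\Nar(T)$. You simply spell out the monotonicity of $\stem$ along the $\Nar$/$\dbrak{\cdot}$ operations that the paper takes as understood.
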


The reason, of course, is that $\s_k(T) = \stem(T\dbrak{k})\preceq \stem(S)$ and $\s_k(T)\notin \Nar(T)$. The argument will also rely on the fact that we enumerate as axioms of~$\Psi$ only pairs $(\stem(T_\alpha), X_\alpha)$ for~$\alpha$ which is accessible at some stage of the construction, rather than all possible strategies~$\alpha$; in particular, we will use the fact that if $\alpha\cat \w$ is never accessible, then we don't map $\stem(T_\alpha) = \stem(T_{\alpha\cat \w})$ to $X_{\alpha\cat \w}$ (which may be longer than $X_\alpha$). We will now show how this is done.

\section{Self-PA oracles} \label{sec:self-PA}

It is not difficult to design a c.e.\ open set $\U \neq 2^\omega$ containing all the computable points; equivalently, to exhibit a {nonempty} $\Pi^0_1$ class with no computable paths. In fact the computational power needed to compute a point in any non-empty effectively closed set is well known and characterized as the PA degrees, i.e., the Turing degrees which can compute a complete and consistent extention of Peano arithmetic. This has many other combinatorial characterizations which easily relativize and even in a uniform way. Namely, there is an effectively closed subset~$\+P$ of the plane such that for all $A\in 2^\w$, the section $\+P^A$ {is nonempty and} does not contain any $A$-computable points, in fact, contains only points which are PA relative to~$A$. 

Our first construction shows that such uniformity cannot be achieved in the higher setting.

\subsection{A warm up example: a bad oracles for an effectively closed set}

\begin{theorem} \label{th:no_A_universal_oracle_continuous_test}
Let $\+P$ be a higher oracle effectively closed set such that for all $B\in 2^\w$, $\+P^B$ is nonempty. Then there is some $A\in 2^\w$ such that $\+P^A$ contains a higher $A$-computable point (a point $X\le_{\wock\Tur}A$). 
\end{theorem}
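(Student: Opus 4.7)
\smallskip

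\noindent\textbf{Proof plan.} We shall build $A\in 2^\w$ and a higher Turing functional $\Psi$ such that $X:=\Psi(A)$ belongs to $\+P^A$. Let $W\subseteq 2^{<\w}\times 2^{<\w}$ be a $\Pi^1_1$ set with $\+W = [W]^\prec$ the higher c.e.\ open complement of $\+P$, and fix a $\wock$-computable enumeration $W = \bigcup_{s<\wock} W_s$. We use the treesh-bone framework of \cref{subsec:treesh-bone} with $\alpha_* = \w+2$, packaging both the tree operation and a choice of bit for extending $X$ into a single outcome: for $k<\w$ and $b\in\{0,1\}$, outcome $2k+b$ means ``take $T_\alpha\dbrak{k}$ and extend $X_\alpha$ by the bit~$b$'', while outcomes $\w$ and $\w+1$ mean ``take $\Nar(T_\alpha)$ with bit $0$ or $1$''. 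This yields an inductive assignment $\alpha\mapsto (T_\alpha,X_\alpha)$, and $\Psi$ will enumerate the axiom $(\stem(T_\alpha),X_\alpha)$ exactly when $\alpha$ becomes accessible.

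The construction is driven by the invariant
\[
\+P \cap ([T_\alpha]\times[X_\alpha]) \ne \emptyset,
\]
which holds at $\alpha=\emptystring$ by hypothesis. Since $[T_\alpha] = [\Nar(T_\alpha)]\cup\bigcup_k[T_\alpha\dbrak{k}]$, any witness $(B,Y)$ of the invariant at $\alpha$ lives in exactly one of these subsets, determining a unique subtree outcome, and the bit $Y(|X_\alpha|)$ selects the parity; the same $(B,Y)$ then witnesses the invariant at $\alpha\cat o$. To make this effective, at stage $s$ we work with $\+P_s = (2^\w\times 2^\w)\setminus [W_s]^\prec$. At strategy $\alpha = \xi_s\restriction n$, we pick the lex-smallest outcome $o$ for which $\+P_s \cap ([T_{\alpha\cat o}]\times [X_{\alpha\cat o}])$ is still nonempty; this is decidable at stage $s$ because the closed set in question is cut out from a compact set by finitely many relevant open boxes from~$[W_s]^\prec$.

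The approximation $\seq{\xi_s}$ is $\wock$-computable, lex-nondecreasing, and uniformly bounded in $(\w+2)^\w$, so by the convergence fact of \cref{subsec:formalising_the_argument} it converges to a true path~$\xi$. At each level, if $\xi_s(n)$ takes unboundedly many finite values, admissibility forces it to eventually land at some ordinal $\ge\w$. Setting $X = \bigcup_n X_{\xi\restriction n}$, which lies in $2^\w$ because each strategy extends $X$ by one bit, compactness yields that $\bigcap_n \+P \cap ([T_{\xi\restriction n}]\times [X_{\xi\restriction n}])$ is a nonempty closed subset of $(\bigcap_n [T_{\xi\restriction n}])\times\{X\}$. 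Take $A$ to be the first coordinate of any of its points (the leftmost, say, ensuring $A\le_\Tur O$); then $(A,X)\in\+P$, i.e.\ $X\in \+P^A$.

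It remains to verify that $\Psi(A)=X$, which follows the pattern of \cref{subsec:computing_with_the_bad_oracle}. For $\alpha$ on the true path, $A\succ\stem(T_\alpha)$ since $A\in [T_{\xi\restriction n}]$ for all~$n$, and the on-path axioms $(\stem(T_\alpha),X_\alpha)$ are nested with union~$X$. For an accessible but off-path strategy $\alpha$, comparing $\alpha$ with $\xi$ at the first differing level and invoking \cref{fact:small_tool1} shows $\stem(T_\alpha)\not\prec A$, so the stray axiom does not apply to~$A$. Hence $\Psi$ is consistent on~$A$ and $\Psi(A)=X$, yielding $X\le_{\wock\Tur}A$ and $X\in \+P^A$. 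The main obstacle is the coordination between the admissibility argument (guaranteeing that the lex-increasing path $\seq{\xi_s}$ stabilises) and the preservation of the invariant in the limit, so that the compactness argument produces a pair in $\+P$ whose second coordinate is exactly the $X$ read off by $\Psi$.
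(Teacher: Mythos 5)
Your invariant $\+P\cap([T_\alpha]\times[X_\alpha])\ne\emptyset$ and the choice to decouple the bit of~$X$ from the tree operation (so outcomes $\w$ and $\w+1$ both give $\Nar(T_\alpha)$, and $2k$, $2k+1$ both give $T_\alpha\dbrak{k}$) together break the verification that $\Psi$ is consistent on~$A$. The treesh-bone framework requires that incomparable strategies give trees with disjoint sets of paths, so that stray axioms cannot fire on~$A$; your map from outcomes to trees is two-to-one, and in particular $\stem(T_{\alpha\cat\w})=\stem(T_{\alpha\cat(\w+1)})=\stem(T_\alpha)$. Since the approximation $\seq{\xi_s}$ is only lex-nondecreasing, it is perfectly possible for $\alpha\cat\w$ to be accessible at some stage while the true path passes through $\alpha\cat(\w+1)$; both axioms $(\stem(T_\alpha),X_\alpha\cat 0)$ and $(\stem(T_\alpha),X_\alpha\cat 1)$ then get enumerated into~$\Psi$, and both fire on~$A$ because $\stem(T_\alpha)\prec A$. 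Invoking \cref{fact:small_tool1} ``at the first differing level'' does not save you: at that level the trees are identical, and the two constructions beyond it use different $X$-values, so there is no reason the subsequent outcomes along the off-path strategy should ever produce a stem incompatible with~$A$. The paper warns about precisely this pitfall at the end of \cref{subsec:computing_with_the_bad_oracle}.

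The paper resolves the tension by keeping $\alpha_*=\w+1$ and making the bit a \emph{function} of the tree outcome ($X_\alpha(n)=0$ iff $\alpha(n)<\w$), which restores injectivity of the map to trees; the price is that the invariant must be strengthened to the uniform statement $[X_\alpha]\cap\+P^B\ne\emptyset$ \emph{for all} $B\in[T_\alpha]$, and the passage through the $\w$-outcome then requires the compactness argument: if some $C\in[\Nar(T_\alpha)]$ had $\+P^C\cap[X_\alpha\cat 1]=\emptyset$, one pulls a witnessing string $\s$ onto $\Nar(T_\alpha)$, finds $k$ with $\s_k(T_\alpha)\succeq\s$, and combines this with the $k$-th failure (some $B\in[T_\alpha\dbrak{k}]$ with $\+P^B\cap[X_\alpha\cat 0]=\emptyset$) to kill $\+P^B\cap[X_\alpha]$ entirely, contradicting the invariant at~$\alpha$. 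Your existential invariant cannot run this argument, and, as noted, the existential invariant forces you into the $\w+2$ outcome scheme that breaks consistency. So the gap is structural, not cosmetic: you need the universal invariant and the bit determined by the outcome.
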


Further, $A$ can be taken to be higher $\Delta^0_2$. 


\begin{proof}
The construction follows the simple scheme described above assigning strategies $\alpha\in (\w+1)^{<\w}$ to trees $T_\alpha\in \+T$:
\begin{itemize}
	\item $T_{\epsilon} = 2^{<\w}$; 
	\item $T_{\alpha\cat k} = T_\alpha\dbrak{k}$;
	\item $T_{\alpha\cat \w} = \Nar(T_\alpha)$. 
\end{itemize}

We also define a mapping from strategies~$\alpha\in (\w+1)^{<\w}$ to finite binary sequences $X_\alpha$, with $|X_\alpha| = |\alpha|$, by letting, for $n<|\alpha|$, $X_\alpha(n) = 0$ if and only if $\alpha(n)<\w$. 

At stage~$s$ we define the path of strategies $\xi_s$ by recursion on their length. That is, the empty strategy $\epsilon$ is always accessible, and given an accessible $\alpha\prec \xi_s$, we determine an immediate successor $\alpha\cat o\prec \xi_s$. 

Suppose that $\alpha\prec \xi_s$. 
There are two possibilities, depending on whether there is some $k<\w$ such that for all $\s \in T_{\alpha}\dbrak{k}$ we have $[X_\alpha\cat 0]\cap \+P_s^\s \ne \emptyset$. 
\begin{enumerate}[label=(\roman*)]
	\item If there is such, we choose~$k$ to be the least, and declare that $\alpha\cat k\prec \xi_s$. 
	\item If no such~$k$ exists, we declare that $\alpha\cat \w\prec \xi_s$. 
\end{enumerate}

This defines the construction. 


\subsubsection{Convergence} 
We show that if $s<t$ then $\xi_s\le \xi_t$ (in the lexicographic ordering of $(\w+1)^\w$). For let $\alpha$ be a finite common initial segment of $\xi_s$ and~$\xi_t$. For all~$\s$, $\+P^\s_s\supseteq \+P^\s_t$; it follows that if $\alpha\cat k\prec \xi_t$ for some $k<\w$ then for all $\s\in T_\alpha\dbrak{k}$, $\+P^\s_s\cap [X_\alpha]\ne\emptyset$; therefore $\alpha\cat m\prec \xi_s$ for some $m\le k$. On the other hand, if $\alpha\cat \w\prec \xi_s$, then for all~$k$, $\+P^\s_t\cap [X_\alpha] = \emptyset$ for some $\s\in T_\alpha\dbrak{k}$, which implies that $\alpha\cat \w\prec \xi_t$ as well. 

\smallskip

Thus $\seq{\xi_s}$ converges to some $\xi\in (\w+1)^\w$. Now choose any $A\in \bigcap \left\{ T_\alpha \,:\,  \alpha\prec \xi \right\}$. We also let $X = \bigcup_{\alpha\prec \xi} X_\alpha$. 

\subsubsection{Verifying that $X\in \+P^A$} 
By induction on the length of $\alpha\prec \xi$, we show that:
\begin{equation}
[X_\alpha]\cap \+P^B \ne \emptyset \text{ for all } B \in [T_\alpha] \tag{*}
\end{equation}
The base case $|\alpha|=0$ holds by the assumption that $\+P^B$ is nonempty for all~$B$. Suppose that this has been shown for $\alpha$ and let us show it for $\alpha\cat o$, where $\alpha\cat o\prec \xi$ for $o< \w+1$. If $o=k<\omega$ then $X_{\alpha\cat o} = X_\alpha\cat 0$; $T_{\alpha\cat o} = T_\alpha\dbrak{k}$, and for all $B\in [T_{\alpha\cat o}]$, $[X_\alpha\cat 0]\cap \+P^B\ne \emptyset$. This is by compactness: for all $\s\in T_{\alpha\cat o}$, for all~$s$, $[X_\alpha\cat 0]\cap \+P_s^\s\ne\emptyset$. 

Suppose then that $o = \w$; so $X_{\alpha\cat o} = X_\alpha\cat 1$. By construction, for every~$k$ there is $\s\in T_\alpha\dbrak{k}$ such that $[X_\alpha\cat 0]\cap \+P^{\s}= \emptyset$. Suppose, for a contradiction, that there is some $C \in [T_{\alpha\cat o}]= [\Nar(T_{\alpha})]$ such that $[X_{\alpha\cat o}] \cap \+P^C = \emptyset$. By compactness, there is some $\s \in T_{\alpha\cat o}$ such that $[X_\alpha \cat 1]\cap \+P^\s = \emptyset$. There is some $k<\omega$ such that $\s_k(T_{\alpha})$ extends~$\s$, and there is some $B\in [T_{\alpha}\ldbrack k \rdbrack] \subseteq [T_{\alpha}]$ such that $[X_\alpha\cat 0]\cap \+P^{B} = \emptyset$. Since $\s\prec B$ we get $[X_\alpha] \cap \+P^B = \emptyset$, contradicting the induction hypothesis for~$\alpha$. 

Now $X\in \+P^A$ follows from the fact that $\+P^A$ is closed.

\subsubsection{Verifying that $X\le_{\wock\Tur} A$} 
It is immediate that $X\le_{\wock\Tur} \xi$: use the functional $\left\{ (\alpha,X_\alpha) \,:\,  \alpha\in (\w+1)^{<\w} \right\}$.  We show that $\xi\le_{\wock\Tur} A$. 
We define the higher Turing functional
 \[
 \Phi = \left\{ (\stem(T_{\alpha}), \alpha)  \,:\, (\exists s<\wock)\,\alpha\prec \xi_s \right\}.
 \]
 For all $\alpha\prec \xi$ we have $\stem(T_{\alpha})\prec A$ and $\alpha\prec \xi_s$ for some~$s$, so $\alpha\prec \Phi(A)$. So to show that $\Phi(A)=\xi$, it remains to show that~$\Phi$ is consistent on~$A$. That is, to show that for all~$s$, if $\alpha\prec \xi_s$ and $\stem(T_\alpha)\prec A$, then $\alpha\prec \xi$. This is done by induction on the length of~$\alpha$.

 So fix some~$\alpha\in (\w+1)^{<\w}$ and some $s<\wock$, and suppose that $\alpha\prec \xi_s,\xi$, but that $\alpha\cat m\prec \xi_s$ and $\alpha\cat o\prec \xi$, where $m\ne o$. We necessarily have $m<o$, so $m<\w$. Thus $\stem(T_{\alpha\cat m}) = \s_m(T_\alpha)$. We need to show that $\stem(T_{\alpha\cat m})\nprec A$. There are two cases:
 \begin{itemize}
 	\item  If $o<\w$, then $\stem(T_{\alpha\cat o}) = \s_o(T_\alpha)\prec A$, and $\s_m(T_\alpha)$ and $\s_o(T_\alpha)$ are incomparable. 
 	\item If $o=\w$ then $A\in [T_{\alpha\cat o}] = \Nar(T_\alpha)$, while $\s_m(T_\alpha)\notin \Nar(T_\alpha)$ (recall \cref{fact:small_tool1}), so again $\s_m(T_\alpha)\nprec A$. 
 \end{itemize}

\subsubsection{The complexity of~$A$} 
If there are infinitely many~$n$ such that $\xi(n)<\w$, then $\seq{T_n}$ is a shrinking path, so $\bigcap_n[T_n]$ is a singleton, and $\smallseq{\bigcup_{\alpha\prec \xi_s} \stem(T_{\alpha})}_{s<\wock}$ converges to~$A$, so $A$ is higher~$\Delta^0_2$. Otherwise, the leftmost element of $\bigcap_n [T_n]$ is hyperarithmetic. The latter option is possible, for example consider when $\+P^B = \{1^\infty\}$ for all~$B$. 
\end{proof}

\subsection{Self-PA oracles} 
\label{sub:self_pa_oracles}

\Cref{th:no_A_universal_oracle_continuous_test} shows that the uniform construction of uniform PA-complete effectively closed sets fails in the higher setting. One can wonder what happens if we drop the uniformity requirement. 

\begin{definition} \label{def:self-PA_oracles}
	An oracle $A$ is \emph{higher self-$\PA$} if every nonempty higher $A$-co-c.e.\ closed set contains some higher $A$-computable point.  
\end{definition}

In this section we show that for computing points in closed sets, the situation in the higher setting does not deviate too much from the standard one:

\begin{theorem} \label{thm:no_self-PA}
	There is no higher self-$\PA$~oracle. 
\end{theorem}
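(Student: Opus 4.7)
The plan is to adapt the classical fact that no oracle $A$ is PA-over-itself — witnessed by the $\Pi^0_1(A)$ class of diagonally non-computable functions — to the higher setting. Fix $A\in 2^\w$ and enumerate all higher Turing functionals $\{\Psi_e\}_{e<\w}$ via a $\Pi^1_1$ indexing (e.g., by notations in Kleene's $\KO$). I would build a higher $A$-c.e.\ set of strings $V\subseteq 2^{<\w}$ such that (i) for every $e$ with $\Psi_e(A)\in 2^\w$, some prefix of $\Psi_e(A)$ of length $e+2$ belongs to $V$, and (ii) $\mu([V]^\prec)\le 1/2$. Then $\+P := 2^\w\setminus[V]^\prec$ is a nonempty higher $A$-co-c.e.\ closed set avoiding every higher $A$-computable real, contradicting the hypothesis that $A$ is self-$\PA$.

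The main obstacle — and what makes this a substantive theorem rather than a literal translation of the classical argument — is that the standard trick for obtaining, from an arbitrary functional, a consistified version outputting a single canonical prefix per $e$ relies on comparing time of enumeration with length of strings, as discussed in \cref{sub:consistency_and_uniformity}. This time trick fails for $\Pi^1_1$ functionals. Without consistification, the naive choice $V^A = \{\sigma \in 2^{e+2} : \exists\tau\prec A, \sigma'\succeq\sigma, (\tau,\sigma')\in\Psi_e\}$ may, for an inconsistent $\Psi_e$ on $A$, contain many strings at level $e+2$ and push $\mu([V]^\prec)$ all the way up to $1$, leaving $\+P$ empty.

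To overcome this, I would pick, for each $e$, a canonical prefix of length $e+2$ by first locating the shortest $\tau\prec A$ at which $\Psi_e$ produces an output of length $\ge e+2$, and then, among all such $\sigma$'s at this $\tau$, choosing the lexicographically first. Both clauses are ``no smaller witness exists'' statements and are a priori only $\Sigma^1_1$, which places the naive formalisation of this selection in the difference class $\Pi^1_1 \wedge \Sigma^1_1$ rather than in $\Pi^1_1$. The heart of the proof is to re-express the selection as a genuine $\Pi^1_1$ oracle c.e.\ operator $W\subseteq 2^{<\w}\times 2^{<\w}$ with $V = W^A$: using admissibility of $\wock$ and $\Sigma^1_1$-bounding (exactly as in the proof of \cref{th:prefix_free_failure}), one can show that if no competing witness (shorter $\tau'$ or lex-smaller $\sigma'$) ever appears, then this absence is already certified by some admissible stage $\alpha_e<\wock$, converting the $\Sigma^1_1$ clauses into $\Pi^1_1$ conditions bounded below $\alpha_e$ on the axioms of $\Psi_e$.

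With $V=W^A$ so defined, the $A$-section contains at most one $\sigma$ per level $e+2$, so $\mu([V]^\prec)\le \sum_e 2^{-(e+2)} = 1/2$, yielding $\+P\ne\emptyset$. Suppose for contradiction that $X\in\+P$ is higher $A$-computable; then $X=\Psi_e(A)\in 2^\w$ for some $e$, which forces $\Psi_e$ to be consistent on $A$ and total along $A$. Hence the canonical selection must pick the unique shortest $\tau\prec A$ yielding an output of length $e+2$ and the unique compatible $\sigma=X\restriction(e+2)$, placing $\sigma$ in $V$; this gives $X\in[V]^\prec$, contradicting $X\in\+P$. The delicate step throughout is the $\Pi^1_1$-definability of the canonical selection, which is precisely where the admissibility of $\wock$ substitutes, in a controlled way, for the classical time-trick-based consistification.
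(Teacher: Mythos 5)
Your plan is to build a \emph{single} $\Pi^1_1$ oracle c.e.\ operator $W\subseteq 2^{<\w}\times 2^{<\w}$ such that, for every oracle $A$, the class $\+P^A = 2^\w\setminus[W^A]^\prec$ is nonempty and avoids all higher $A$-computable reals. If that worked, $\+P$ would be a higher oracle effectively closed set that is nonempty on every section and has no higher $A$-computable point in any section --- which directly contradicts \cref{th:no_A_universal_oracle_continuous_test}. So no fix of your canonical-selection device can possibly go through: the failure must, and does, occur inside the definability argument, and the theorem is \emph{necessarily non-uniform}.

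The gap is precisely the sentence claiming that ``if no competing witness ever appears, then this absence is already certified by some admissible stage $\alpha_e<\wock$.'' Admissibility and $\Sigma^1_1$-bounding, as used in \cref{th:prefix_free_failure}, allow one to conclude that a $\Sigma_1$ event that \emph{does} occur occurs by some bounded stage (a compactness-plus-bounding argument for a positively observed fact). They give you no way, at any stage $s<\wock$, to certify that a shorter $\tau'$ or lex-smaller $\s'$ will \emph{never} be enumerated into $\Psi_e$: that is a $\Sigma^1_1$-over-$\Pi^1_1$ assertion about the entire length-$\wock$ enumeration, and it is precisely the sort of negative information a time trick would have provided. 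Your $W$ is therefore not $\Pi^1_1$; the ``canonical selection'' collapses into a difference condition, and you are back where you started.

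The paper's proof instead splits on whether $A\ge_{\wock\Tur}O$. For $A\nge_{\wock\Tur}O$ (\cref{prop:no-self-PA:incomplete_case}), one abandons canonical selection entirely: at stage $s$ one removes $\Phi_{e,s}(\tau)$ only when $|\Phi_{e,s}(\tau)|>p(s)^2+e^2$, where $p$ injectively enumerates $O$; late removals are forced to be long, so the total measure removed from each $\+P^B$ is uniformly small, and the survival of $\Phi_e(A)$ would let $A$ compute $O$. For $A\ge_{\wock\Tur}O$, one uses the oracle's strength: $A$ higher-computes $C=\{(e,\s):\Phi_e(\s)\text{ is consistent}\}$, and this extra information lets the operator gate its removals on genuine consistency, bounding measure loss per index to a single string of length $\ge e+1$. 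Neither half is the uniform diagonalisation you aim for, because that uniform diagonalisation cannot exist.
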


In contrast, Miller and Soskova showed that there are self-PA enumeration degrees. 

\medskip

The proof of \cref{thm:no_self-PA}, necessarily non-uniform by \cref{th:no_A_universal_oracle_continuous_test}, splits into two cases, which we treat separately, for the first stating a slightly stronger result:

\begin{proposition} \label{prop:no-self-PA:incomplete_case}
	There is {a} higher oracle effectively closed set~$\+P$ such that:
	\begin{enumerate}[label=\textup{(\alph*)}]
		\item For all $B\in 2^\w$, $\+P^B\ne \emptyset$; and
		\item If $A \nge_{\wock\Tur} O$ then $\+P^A$ contains no higher $A$-computable points.
	\end{enumerate}
\end{proposition}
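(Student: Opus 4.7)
I would adapt the treeshbone framework of \cref{th:no_A_universal_oracle_continuous_test} to construct a single higher oracle effectively closed set $\+P \subseteq 2^\w\times 2^\w$ satisfying both clauses. Fix an enumeration $\{\Psi_e\}$ of higher Turing functionals, and build $\+P = [T]$ by enumerating pairs out of a tree $T \subseteq 2^{<\w}\times 2^{<\w}$. Strategies $\alpha\in(\w+1)^{<\w}$ are mapped to trees $T_\alpha\in\+T$ in the \emph{first} coordinate (as in Subsection~\ref{subsec:treesh-bone}) and simultaneously to finite binary strings $Y_\alpha\in 2^{<\w}$ recording the \emph{second}. At level $|\alpha|=e$, the strategy at $\alpha$ tries to diagonalize $\Psi_e$ off $\+P^B$ for $B\in[T_\alpha]$: at stage $s$, look for the least $k<\w$ and a string $\tau\succ Y_\alpha$ such that for every $\sigma\in T_\alpha\dbrak{k}$ long enough, $\Psi_e^\sigma$ at stage $s$ extends $\tau$. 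If found, take outcome $k$, set $Y_{\alpha\cat k}=\tau$, and enumerate $(\stem(T_\alpha\dbrak{k}),\tau)$ into the complement of $T$. Otherwise take outcome $\w$ with $Y_{\alpha\cat\w}=Y_\alpha$.

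\textbf{Verification of (a).} As in the warm-up, the left-c.e.\ scheme of Subsection~\ref{subsec:formalising_the_argument} makes $\xi_s$ converge to some $\xi\in(\w+1)^\w$. For every $B\in 2^\w$, the unique path $\alpha_B\in(\w+1)^\w$ with $B\in\bigcap_n [T_{\alpha_B\rest{n}}]$ yields a well-defined $Y_B=\lim_n Y_{\alpha_B\rest{n}}$ (padded canonically on $\w$-outcome levels). Pairs enumerated out of $T$ have first coordinate $\stem(T_{\alpha\cat k})$ for some $\alpha,k$, and by \cref{fact:small_tool1} such a stem is never a prefix of $B$ when $\alpha_B$ takes outcome $\w$ at $\alpha$, i.e., $B\in[\Nar(T_\alpha)]$; hence $(B,Y_B)\in[T]=\+P$, so $\+P^B\ni Y_B$.

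\textbf{Verification of (b).} Suppose for contradiction that $\Psi_e(A)=X\in\+P^A$ with $A\nge_{\wock\Tur}O$. On $A$'s path $\alpha_A$, consider level $e$. If the strategy took outcome $k<\w$, then $Y_{\alpha_A\rest{e+1}}=\tau$ is pinned as a prefix of $\Psi_e(A)=X$; but the pair $(\stem(T_{\alpha_A\rest{e+1}}),\tau)$ was enumerated out of $T$, so $X\notin\+P^A$, a contradiction. If the strategy took outcome $\w$, then for every $k<\w$ the cone $T_{\alpha_A\rest{e}}\dbrak{k}$ witnesses some $\sigma$ where $\Psi_e^\sigma$ fails to pin any bit at position $|Y_{\alpha_A\rest{e}}|$. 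By the admissibility of $\wock$, the ordinals witnessing these cofinal splittings can be coupled with the $\wock$-approximation of $O$: extracting $\alpha_A$ from $A$ (possible because $X=\Psi_e(A)\le_{\wock\Tur}A$, and $Y_A$ encodes $\alpha_A$ level by level) then yields a higher $A$-computable cofinal enumeration of $O$, contradicting $A\nge_{\wock\Tur}O$.

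\textbf{Main obstacle.} The main obstacle is this coupling of outcome-dynamics with the enumeration of $O$. Unlike the warm-up, where only the single higher-$\Delta^0_2$ oracle produced by the construction had to be handled, here the failure of diagonalization must code $O$ \emph{for every} $A$ producing a higher $A$-computable point in $\+P^A$. I would handle this by interleaving into the construction an explicit clock: acceptance of a new outcome at level $e$ is delayed until a new element of $O$ has been enumerated by the current stage. The subtle point is maintaining this coupling while simultaneously preserving $\+P^B\ne\emptyset$ for every $B\in 2^\w$, which relies on \cref{fact:small_tool1} together with the density of bones along narrow subtrees to ensure that the extra delays never force the removal of $(\stem(T_{\alpha_B\rest{n}}),Y_{\alpha_B\rest{n}})$ from $T$ for any $B$.
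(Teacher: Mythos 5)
The paper's proof of \cref{prop:no-self-PA:incomplete_case} does not use the treeshbone framework at all, and your attempt to adapt that framework to this statement runs into a fundamental mismatch of quantifiers. The treeshbone construction (\cref{th:no_A_universal_oracle_continuous_test}) proves an \emph{existential} statement over oracles: given $\+P$, build one specific $A$ on the true path. Here the target is \emph{universal} over $A$: build one $\+P$ that works for \emph{every} $A \nge_{\wock\Tur} O$. A single accessible path $\seq{\xi_s}$ converging to a true path controls what happens on one oracle, but gives no handle on the behaviour of $\+P^B$ for an arbitrary $B$ whose path through $\+T$ is unrelated to $\xi$. You acknowledge this in your ``Main obstacle'' paragraph, but the ``explicit clock'' fix is not worked out, and it is exactly where the proof must happen.

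There is also a concrete bug in the setup. At outcome $k$ you set $Y_{\alpha\cat k}=\tau$ \emph{and} remove $(\stem(T_\alpha\dbrak{k}),\tau)$ from $T$. Then for any $B\in[T_\alpha\dbrak{k}]$ you have both $\stem(T_\alpha\dbrak{k})\prec B$ and $\tau\prec Y_B$, so $(B,Y_B)\notin[T]=\+P$, contradicting the conclusion of your own verification of (a). The point $Y_B$ must be steered \emph{away from} whatever is removed on $B$'s cone, not onto it; as written the construction undermines itself.

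The paper's argument is far more direct and dispenses with trees entirely. Fix an injective $\wock$-computable enumeration $p\colon\wock\to\w$ of $O$. Define $\+P$ by removing, at stage $s$, the cylinder $[\Phi_{e,s}(\tau)]$ from $\+P^\tau$ whenever $\Phi_{e,s}(\tau)$ is consistent and $|\Phi_{e,s}(\tau)|>p(s)^2+e^2$ (taking $\tau$ minimal). Nonemptiness of every $\+P^B$ is a counting argument: any string removed from $\+P^B$ of length $\le n$ forces $p(s),e<\sqrt{n}$, and injectivity of $p$ bounds the number of such pairs below $n$. For (b), if $\Phi_e(A)$ is total, consistent and in $\+P^A$, then whenever $|\Phi_{e,s}(\tau)|>n^2+e^2$ for some $\tau\prec A$, we must have $O_s\rest{n}=O\rest{n}$ (otherwise $\Phi_e(A)$ gets removed later), and this immediately yields a higher Turing functional $\Psi$ with $\Psi(A)=O$. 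The mechanism is a calibration of removal lengths against the pace of enumeration of $O$, so that a failure of diagonalization for any given $A$ \emph{uniformly} hands $A$ a computation of $O$. Your tree-based proposal neither achieves this calibration nor explains how it would be achieved for oracles off the accessible path.
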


\begin{proof}
	Let $p\colon \wock\to \w$ be a $\wock$-computable (injective) enumeration of~$O$. 

	\smallskip
	
	We define a higher oracle effectively closed set~$\+P$ as follows. At every stage $s<\wock$, for every $e<\w$, for every string $\tau\in 2^{<\w}$, minimal with respect to the property:
	 \[
	 \Phi_{e,s}(\tau) \text{ is consistent, and } |\Phi_{e,s}(\tau)| > p(s)^2 + e^2
	 \]
	remove $[\Phi_{e,s}(\tau)]$ from $\+P^\tau$. 

	\medskip
	
	First we observe that for all~$B$, $\+P^B$ is nonempty. Indeed, for each~$n$, fewer than~$n$ many strings of length $\le n$ are ever removed from $\+P^B$. For suppose that $[\s]$ is removed from $\+P^B$, and $|\s|\le n$. Then $\s = \Phi_{e,s}(\tau)$ for some $\tau\prec B$ and is removed at some stage~$s$; note that at each stage we act for at most one $\tau\prec B$. We have $|\s| > p(s)^2+e^2$ so $p(s),e< \sqrt{n}$; as~$p$ is injective, there are fewer than~$n$ many such pairs $(e,s)$. 

	\medskip
	
	Next, let $A\in 2^\w$, $e<\w$, and suppose that $\Phi_e(A)$ is total, consistent, and an element of $\+P^A$. Suppose that $\tau\prec A$, $s<\wock$ and $|\Phi_{e,s}(\tau)|> n^2+e^2$; then $O_s\rest{n} = O\rest{n}$: otherwise there is some $t>s$ with $p(t)<n$, and as $|\Phi_{e,t}(\tau)|\ge |\Phi_{e,s}(\tau)|$, we would remove $\Phi_e(A)$ from $\+P^A$ at stage~$t$. So the functional 
	\[
		\Psi = \left\{ (\tau, O_s\rest{n}) \,:\, s<\wock, \Phi_{e,s}(\tau) \text{ is consistent, and } |\Phi_{e,s}(\tau)|> n^2+e^2  \right\}
	\]
	gives $\Psi(A)=O$. 
\end{proof}

\begin{proof}[Proof of \cref{thm:no_self-PA}]
	In light of \cref{prop:no-self-PA:incomplete_case}, it remains to show that if $A\ge_{\wock\Tur}O$ then there is a higher $A$-effectively closed set containing no higher $A$-computable points. 

	\smallskip
	
	Let 
	\[
		C = \left\{ (e,\s) \,:\,  \Phi_e(\s)\text{ is consistent} \right\}.
	\]
	Then $C\le_\Tur O$. Let~$\Psi$ be a higher functional such that $\Psi(A)= C$. We define a higher oracle effectively closed set $\+P$: for any~$\tau$ and~$e$, if there is some $\s\preceq \tau$ such that $\Psi(\tau,e,\s) = \yes$,\footnote{Meaning that~$\tau$ thinks that $(e,\s)\in C$.} and some $s<\wock$ such that $\Phi_{e,s}(\s)$ is consistent and has length at least $e+1$, then we remove $[\Phi_{e,s}(\s)]$ from $\+P^\tau$. 

	\smallskip
	
	If $\rho = \Phi_{e,s}(\s)$ is removed from $\+P^A$ then $(e,\s)\in C$. It follows that if $\Phi_{e,s}(\s)$ and $\Phi_{e,s'}(\s')$ are removed from $\+P^A$ then these removed strings are comparable; the losses on behalf of~$e$ thus amount to no more than a single string of length at least~$e+1$. It follows that $\+P^A$ is nonempty. 

	If $\Phi_e(A)$ is total and consistent then for sufficiently long $\s\prec A$ and sufficiently late~$s$ we have $|\Phi_{e,s}(\s)|>e$; and for sufficiently long $\tau\prec A$ we have $\Psi(\tau,e,\s) = \yes$; so $\Phi_e(A)\notin \+P^A$.  

	\smallskip
	
	Note that it is quite possible that if $\Psi(B)\ne C$ then $\+P^B$ is empty.
\end{proof}

\subsection{Bad oracles for uniform self-PA} 
\label{sub:bad_uniform_self_pa}

In light of \cref{thm:no_self-PA}, the best we can hope for is badness for uniform failure of self-PA-ness.

\begin{definition} \label{def:bad_for_uniform_self-PA}
	An oracle~$A$ is \emph{bad for uniform self-$\PA$} if for every higher oracle effectively closed set~$\+P$, if for every~$B$, $\+P^B\ne \emptyset$, then $\+P^A$ contains some higher $A$-computable point. 
\end{definition}

\begin{theorem} \label{thm:existence_of_bad_for_uniform_higher_self_PA}
	There is an oracle which is bad for uniform self-$\PA$. 
\end{theorem}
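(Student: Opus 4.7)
The plan is to extend the warm-up construction of \cref{th:no_A_universal_oracle_continuous_test} to satisfy countably many requirements simultaneously. Fix a $\wock$-effective enumeration $\{\+P_i\}_{i<\w}$ of all higher oracle effectively closed sets and a computable surjection $r\colon \w\to\w$ in which every $i<\w$ is attained infinitely often; level~$n$ of the strategy tree will work on requirement $i=r(n)$. As in the warm-up I would use strategies $\alpha\in(\w+1)^{<\w}$ and the treesh-bone assignment $T_\alpha$: $T_\epsilon=2^{<\w}$, $T_{\alpha\cat k}=T_\alpha\dbrak{k}$ for $k<\w$, and $T_{\alpha\cat \w}=\Nar(T_\alpha)$. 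In addition, I would attach to each strategy $\alpha$ a family of finite binary strings $X^i_\alpha$ ($i<\w$), of length $|\{m<|\alpha|\,:\,r(m)=i\}|$, defined by: if $r(|\alpha|)=i$ then $X^i_{\alpha\cat o}=X^i_\alpha\cat 0$ when $o<\w$ and $X^i_{\alpha\cat \w}=X^i_\alpha\cat 1$, while $X^j_{\alpha\cat o}=X^j_\alpha$ for all $j\ne i$.

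At stage~$s$ the construction builds a path $\xi_s\in(\w+1)^\w$ exactly as in the warm-up, applied to the requirement in play: given an accessible $\alpha$ with $i=r(|\alpha|)$, look for the least $k<\w$ such that $[X^i_\alpha\cat 0]\cap\+P_{i,s}^\s\ne\emptyset$ for every $\s\in T_\alpha\dbrak{k}$; if such $k$ exists make it the outcome, otherwise take outcome $\w$. Monotonicity $\xi_s\le\xi_t$ for $s<t$ follows as in the warm-up from the fact that $\+P_{i,s}^\s$ is non-increasing in~$s$, so $\seq{\xi_s}$ converges to a true path $\xi$; define $A\in\bigcap_{\alpha\prec\xi}[T_\alpha]$ and, for each~$i$, $X^i=\bigcup_{\alpha\prec\xi}X^i_\alpha$. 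For the verification I would prove, by induction on $\alpha\prec\xi$, the invariant that \emph{for every $i$ such that $\+P_i^B\ne\emptyset$ for all $B\in 2^\w$ and every $B\in[T_\alpha]$, $[X^i_\alpha]\cap\+P_i^B\ne\emptyset$}. This is immediate at successor strategies working on a different requirement $j\ne i$ (since $X^i$ and the inclusion $[T_{\alpha\cat o}]\subseteq[T_\alpha]$ are unchanged for $i$), and at strategies working on~$i$ it is exactly the argument carried out in the warm-up: the finite-outcome case is a direct compactness argument, and the infinite-outcome case uses \cref{fact:small_tool1} together with the density of the $\s_k(T_\alpha)$ along $\Nar(T_\alpha)$ to contradict the induction hypothesis at~$\alpha$. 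Closedness of $\+P_i^A$ then gives $X^i\in\+P_i^A$.

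To obtain $X^i\le_{\wock\Tur}A$ I would enumerate, for each~$i$, the higher Turing functional
\[
\Psi_i=\bigl\{(\stem(T_\alpha),X^i_\alpha)\,:\,\alpha\text{ is accessible at some stage}<\wock\bigr\}.
\]
Consistency of $\Psi_i$ on $A$ reduces, as in the warm-up, to the statement that if $\alpha$ is ever accessible and $\stem(T_\alpha)\prec A$ then $\alpha\prec\xi$; this is proved by induction on $|\alpha|$ using that if $\alpha\cat m\prec\xi_s$ and $\alpha\cat o\prec\xi$ with $m\ne o$ then either $o<\w$ and the stems $\s_m(T_\alpha)$, $\s_o(T_\alpha)$ are pairwise incomparable, or $o=\w$ and $\s_m(T_\alpha)\notin\Nar(T_\alpha)$ by \cref{fact:small_tool1}. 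Since $r$ is a surjection with every value attained infinitely often, each $X^i$ is indeed in $2^\w$.

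The step I expect to be the main obstacle is preserving the invariant $[X^i_\alpha]\cap\+P_i^B\ne\emptyset$ across strategies that work on \emph{other} requirements $j\ne i$, since such strategies freely shrink $T_\alpha$ in potentially damaging ways. The key point is that the construction is designed so that when a strategy acts on $j\ne i$ the string $X^i_\alpha$ is not extended at all, and since $[T_{\alpha\cat o}]\subseteq[T_\alpha]$, the invariant for $i$ is automatically transported. Consequently, the genuine work — and the only place where the treesh-bone dichotomy is used — happens at strategies working on $i$, where the warm-up argument applies verbatim. Requirements $i$ for which $\+P_i^B$ is empty for some $B$ need not be satisfied; the construction still runs and the trees $T_n$ still converge, but the invariant is simply vacuous for such $i$.
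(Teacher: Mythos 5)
Your proposal is correct and essentially identical to the paper's proof: you interleave the treesh-bone construction for all requirements using a computable assignment of levels to requirements (your surjection~$r$ versus the paper's pairing $\seq{e,d}$, which amount to the same thing), attach to each strategy $\alpha$ the family of partial guesses $X^i_\alpha$, run the same stage-by-stage outcome selection, and verify the invariant $[X^i_\alpha]\cap\+P_i^B\ne\emptyset$ for all $B\in[T_\alpha]$ exactly as in the warm-up, noting that it transports for free through strategies working on other requirements. Your consistency argument for $\Psi_i$ likewise mirrors the paper's argument that $\xi\le_{\wock\Tur}A$, relying on $\xi_s\le\xi$, the incomparability of $\s_m(T_\alpha)$ and $\s_o(T_\alpha)$ for $m\ne o<\w$, and \cref{fact:small_tool1} when the true outcome is~$\w$.
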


In fact, there is a higher $\Delta^0_2$ such oracle.

\begin{proof}
	Not much needs to be added to the construction of \cref{th:no_A_universal_oracle_continuous_test}. We tackle all higher oracle effectively closed sets and interleave the constructions. 

	Fix a computable pairing function $(e,d)\mapsto \seq{e,d}$ satisfying $\seq{e,d}< \seq{e,d+1}$ for all~$e$ and~$d$; for example, the standard Cantor pairing function will do. Let $\seq{\+P_e}$ be an effective list of all higher oracle effectively closed sets. We again let the collection of srategies be $(\w+1)^{<\w}$, with the same scheme $\alpha\mapsto T_\alpha$. We now approximate sets $X_e$ for all $e<\w$, with the intention that $X_e\in \+P_e^A$ is higher $A$-computable unless $\+P_e^B$ is empty for some~$B$. For every strategy~$\alpha$ and every $e<\w$ we define $X_{e,\alpha}$ by induction on $|\alpha|$: 
	\begin{itemize}
		\item For every~$e$, $X_{e,\epsilon} = \epsilon$; 
		\item If $|\alpha|= \seq{e,d}$ then $X_{e,\alpha\cat k} = X_{e,\alpha}\cat 0$ for $k<\w$, and $X_{e,\alpha\cat \w} = X_{e,\alpha}\cat 1$. For all $e'\ne e$, $X_{e',\alpha\cat o} = X_{e',\alpha}$ for all $o\le \w$. 
	\end{itemize}
	So $X_{e,\alpha}(d)$ is defined if and only if $\seq{e,d}<|\alpha|$. 

	The construction is as expected: suppose that $\alpha\prec \xi_s$; let $|\alpha| = \seq{e,d}$. If there is some~$k$ such that for all $\s\in T_\alpha\dbrak{k}$, $[X_{e,\alpha}\cat 0]\cap \+P^\s_{e,s}\ne \emptyset$, then we let $\alpha\cat k\prec \xi_s$ for the least such~$k$; otherwise we let $\alpha\cat \w\prec \xi_s$.

	For the verification, by induction on $\alpha\prec \xi$ we prove that for all~$e$, \emph{if} for all $B$, $\+P^B_e\ne\emptyset$, then $[X_{e,\alpha}]\cap \+P^B_e\ne\emptyset$ for all $B\in [T_\alpha]$. The proof that $\xi\le_{\wock\Tur} A$ is the same. 

	Note that if $e$ is such that for all~$B$, $\+P^B = 2^\w$, then for all~$d$, $\xi(\seq{e,d}) = 0$. It follows that $\seq{T_{\alpha}}_{\alpha\prec \xi}$ is a shrinking path, i.e., $\bigcap_{\alpha\prec \xi} [T_\alpha]$ is a singleton (and the same holds at every $s<\wock$). 
\end{proof}

Note that \cref{prop:no-self-PA:incomplete_case} implies that any oracle which is bad for uniform  self-PA must higher compute~$O$.

\section{Higher Turing consistent computations} \label{section:separation_fin_h_higher_turing}

\subsection{A bad oracle for consistent functionals}

The following result justifies the use in \cite{BGMContinuousHigherRandomness} of the general form of higher Turing reductions, rather than everywhere consistent reductions.

\begin{theorem} \label{thm:bad_oracle_for_consistent_Turing}
	There is an oracle~$A$ and some $X\le_{\wock\Tur} A$ such that for no everywhere consistent higher functional~$\Phi$ do we have $\Phi(A)=X$. 
\end{theorem}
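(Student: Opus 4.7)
The plan is to implement the heuristic given in \cref{subsec:treesh-bone} using the treesh-bone formalism, whose whole purpose is precisely this kind of diagonalisation. Fix an effective enumeration $\seq{\Phi_e}_{e<\w}$ of all higher Turing functionals and use strategies $\alpha\in(\w+1)^{<\w}$ assigned to trees by the standard scheme $T_\emptystring=2^{<\w}$, $T_{\alpha\cat k}=T_\alpha\dbrak{k}$ for $k<\w$, $T_{\alpha\cat\w}=\Nar(T_\alpha)$, with level $|\alpha|=e$ dedicated to $\Phi_e$. In parallel, define binary witnesses $X_\alpha\in 2^{<\w}$ by $X_\emptystring=\emptystring$, $X_{\alpha\cat k}=X_\alpha\cat 1$ for $k<\w$, and $X_{\alpha\cat\w}=X_\alpha\cat 0$. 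As in the warm-up proof of \cref{th:no_A_universal_oracle_continuous_test}, the approximated true path $\xi\in(\w+1)^\w$ will yield $A\in\bigcap_{\alpha\prec\xi}[T_\alpha]$ and $X=\bigcup_{\alpha\prec\xi}X_\alpha$, with $X\le_{\wock\Tur}A$ witnessed by the higher functional $\Psi=\left\{(\stem(T_\alpha),X_\alpha)\,:\,\alpha\text{ accessible at some stage}\right\}$. The intuition is that outcome $k<\w$ corresponds to the motivation's case in which $\Phi_e$ never responded near $\sigma_k(T_\alpha)$, so $X$ safely extends $X_\alpha\cat 1$; outcome $\w$ corresponds to the iterate-forever case, in which everywhere-consistency of $\Phi_e$ is leveraged against it.

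At stage $s<\wock$ and accessible $\alpha\prec\xi_s$ with $|\alpha|=e$, we put $\alpha\cat k\prec\xi_s$ for the least $k<\w$ such that no $\tau$ comparable with $\sigma_k(T_\alpha)$ satisfies $(\tau,\rho)\in\Phi_{e,s}$ for any $\rho\succeq X_\alpha\cat 1$, and $\alpha\cat\w\prec\xi_s$ if no such $k$ exists. The existence of such a $\tau$ is a monotone $\Sigma_1$-event in~$s$, so the outcome at $\alpha$ is non-decreasing, $\seq{\xi_s}$ is lexicographically non-decreasing, and it converges to some $\xi\in(\w+1)^\w$ by \cref{subsec:formalising_the_argument}(*); admissibility is exactly what ensures that if every $k<\w$ eventually becomes illegal then they all do so by some single stage $<\wock$, at which point outcome~$\w$ appears. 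The verification that $X\le_{\wock\Tur}A$ via $\Psi$---and in particular the consistency of $\Psi$ on $A$ (even though $\Psi$ is not everywhere consistent)---proceeds exactly as in the warm-up proof, using \cref{fact:small_tool1} to exclude axioms attached to strategies accessible-at-some-stage but not on the true path~$\xi$.

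The crux is the diagonalization. For $\alpha\prec\xi$ with $|\alpha|=e$ and outcome $o=k<\w$, stabilisation means that no $\tau$ comparable with $\sigma_k(T_\alpha)$ is ever mapped by $\Phi_e$ to an extension of $X_\alpha\cat 1$; since $\sigma_k(T_\alpha)\prec A$, every $\tau\prec A$ is comparable with $\sigma_k(T_\alpha)$, whence $\Phi_e(A)\not\succeq X_\alpha\cat 1\preceq X$ and $\Phi_e(A)\ne X$.

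The interesting case is $o=\w$, where we must actually use the everywhere-consistency of $\Phi_e$, and this is where I expect the main difficulty to lie. In this case, for every $k<\w$ there is $\tau_k$ comparable with $\sigma_k(T_\alpha)$ with $\Phi_e(\tau_k)\succeq X_\alpha\cat 1$, and $A\in[\Nar(T_\alpha)]$, $X\succeq X_\alpha\cat 0$. Assume for contradiction $\Phi_e(A)=X$ and pick $\eta\prec A$ long enough that $|\Phi_e(\eta)|>|X_\alpha|$, so that $\Phi_e(\eta)\succeq X_\alpha\cat 0$. The key density property of the treesh-bone---that along every path of $\Nar(T_\alpha)$ every initial segment is extended by some $\sigma_k(T_\alpha)$---yields $k$ with $\sigma_k(T_\alpha)\succeq\eta$; then $\eta$ and $\tau_k$ are both comparable with $\sigma_k(T_\alpha)$ and hence with one another, so they have a common extension $B\in 2^\w$. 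By everywhere-consistency of $\Phi_e$ on $B$, the strings $\Phi_e(\eta)$ and $\Phi_e(\tau_k)\succeq X_\alpha\cat 1$ are compatible; since $|\Phi_e(\eta)|\ge|X_\alpha|+1$ this forces $\Phi_e(\eta)\succeq X_\alpha\cat 1$, contradicting $\Phi_e(\eta)\succeq X_\alpha\cat 0$.
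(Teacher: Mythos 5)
Your proposal is correct and follows essentially the same route as the paper's proof: the same treesh-bone framework, the same strategy-to-tree scheme, and the same diagonalisation dichotomy (finite outcome $=$ opponent never responds near $A$; outcome $\w$ $=$ by admissibility the opponent responded densely, which clashes with everywhere-consistency). The only differences are cosmetic: you use the opposite bit convention for $X_\alpha$, you test $\tau$ ranging over \emph{all} strings comparable with $\s_k(T_\alpha)$ rather than just $\tau\in T_\alpha\dbrak{k}$, and you enumerate all higher functionals and invoke consistency only during the verification, whereas the paper first observes that one can effectively enumerate the consistent higher functionals and then diagonalises against those directly; both choices work and give the same construction up to trivial relabelling.
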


Again we can make~$A$ higher~$\Delta^0_2$. We give a direct construction first. 

\begin{proof}
	The construction is similar to that of \cref{th:no_A_universal_oracle_continuous_test}. Note that we have an effective enumeration of all consistent higher functionals $\seq{\Phi_e}$. We can copy the axioms of a functional and stop right before we see an inconsistency creep in. 

	We use the same scheme for mapping strategies to trees as the two constructions above; and the same mapping $\alpha\mapsto X_\alpha$ as in the proof of \cref{th:no_A_universal_oracle_continuous_test}. 

	At stage~$s$, if we have already determined that $\alpha\prec \xi_s$, and $|\alpha|=e$, then we let $\alpha\cat k\prec \xi_s$ if~$k$ is least such that for no $\tau\in T_\alpha\dbrak{k}$ do we have $\Phi_{e,s}(\tau)\succeq X_\alpha\cat 0$. If there is no such~$k$ we let $\alpha\cat \w\prec \xi_s$. 

	Note that if $\Phi_e$ is the empty functional, then $\alpha\cat 0\prec \xi$, so $\seq{T_{\alpha}}_{\alpha\prec \xi}$ is a shrinking path; as usual let $\{A\} = \bigcap_{\alpha\prec \xi}[T_\alpha]$. We claim that for all $e<\w$, $\Phi_e(A)\ne X$: let $\alpha\prec \xi$ have length~$e$. If $\alpha\cat k\prec \xi$, then $\Phi_e(A)$ does not extend $X_\alpha\cat 0 = X\rest{e+1}$. Otherwise, $A\in \Nar(T_\alpha)$ and densely along~$A$, $\Phi_e$ maps strings to $X_\alpha\cat 0$ which is incomparable with~$X$, so $\Phi_e$ cannot map any initial segment of~$A$ to $X\rest{e+1}$. The proof that $\xi\le_{\wock\Tur} A$ is the same. 
\end{proof}

The construction above is almost identical to the one given for \cref{th:no_A_universal_oracle_continuous_test}; in fact we can derive the result from that construction, by applying it to the higher oracle effectively closed set given by the following proposition.

\begin{proposition} \label{prop:avoiding_consistently_computable_reals}
	There is a higher oracle effectively closed set~$\+P$ such that for all~$B$, $\+P^B$ is nonempty but contains no $X$ which is higher $B$-computable by a consistent functional. 
\end{proposition}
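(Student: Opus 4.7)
The plan is to mirror the enumeration idea used in the proof of \cref{prop:no-self-PA:incomplete_case}, but with a simpler length threshold on $\Phi_e(\tau)$ since we no longer need to code Kleene's~$O$. Fix a standard enumeration $\seq{\Phi_e}$ of higher Turing functionals, and build a higher c.e.\ open set $\+U\subseteq 2^\w\times 2^\w$, letting $\+P = (2^\w\times 2^\w)\setminus \+U$. At every stage $s<\wock$, for every $e<\w$, and for every $\tau$ which is $\prec$-minimal with the property that $\Phi_{e,s}(\tau)$ is consistent and has length~$>e+2$, enumerate the product cylinder $[\tau]\times[\Phi_{e,s}(\tau)]$ into~$\+U$.

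The key step is a lemma stating that for every $e$ and every $B\in 2^\w$, at most one $\tau\prec B$ ever triggers an action for~$e$. Once this is established, nonemptiness of $\+P^B$ follows by a cheap measure count: each $e$ contributes a single cylinder of measure $\le 2^{-(e+3)}$ removed from $\+P^B$, so the total measure removed is at most $\sum_e 2^{-(e+3)}= 1/4<1$. For the second half of the statement, I would argue that if $\Phi_e$ is everywhere consistent and $\Phi_e(B)$ is total, then some $\tau\prec B$ satisfies $|\Phi_{e,s}(\tau)|>e+2$ for large enough~$s$, so the $\prec$-minimal such prefix of~$B$ does fire at some stage, and $\Phi_e(B)\succeq\Phi_{e,s}(\tau)$ places $\Phi_e(B)$ inside the removed cylinder over~$\tau$, showing $\Phi_e(B)\notin \+P^B$.

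The main obstacle is the one-trigger-per-branch lemma. My planned proof uses the upward monotonicity of inconsistency in the oracle argument. Suppose $\tau_1\prec\tau_2\prec B$ both fired for~$e$, necessarily at stages $s_1<s_2$ (the other order already contradicts minimality of~$\tau_1$ at~$s_1$). Since $|\Phi_{e,s_2}(\tau_1)|\ge|\Phi_{e,s_1}(\tau_1)|>e+2$, the minimality of~$\tau_2$ at stage~$s_2$ forces $\Phi_{e,s_2}(\tau_1)$ to be inconsistent. But the set of axioms of $\Phi_{e,s_2}$ generating the value at~$\tau_2$ contains the one generating the value at~$\tau_1$, so $\Phi_{e,s_2}(\tau_2)$ inherits the inconsistency, contradicting that $\tau_2$ is a valid trigger at~$s_2$. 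A pleasant by-product is that we never have to restrict the enumeration to everywhere-consistent functionals, which would be awkward since that property is genuinely $\Pi^1_1$ and not uniformly $\wock$-c.e.\ in~$e$; inconsistent $\Phi_e$'s simply stop contributing as soon as their inconsistency becomes visible along~$B$.
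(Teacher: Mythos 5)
Your one-trigger-per-branch lemma is false, and the gap is your dismissal of the case $s_2<s_1$: minimality of~$\tau_1$ at stage~$s_1$ only constrains strings strictly shorter than~$\tau_1$, not the longer $\tau_2$, so nothing stops $\tau_2$ from having fired earlier. This scenario is realised whenever $\tau_2$ is minimal at $s_2$ because $\Phi_{e,s_2}(\tau_1)$ is consistent but still \emph{too short}; later $\tau_1$ becomes minimal once $\Phi_e(\tau_1)$ has grown long, while $\Phi_e(\tau_2)$ may have become inconsistent in the meantime, so the two removed cylinders can be incomparable. Concretely, with $e=0$ let $\Phi_0$ enumerate one axiom per stage, $(0^{n},\rho_1), (0^{n-1},\rho_2),\dots,(0^{n-7},\rho_8)$, where $\rho_1,\dots,\rho_8$ list the eight strings of length~$3$. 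At stage~$i$, the only axiom applicable to $0^{n-i+1}$ yields $\Phi_{0,i}(0^{n-i+1})=\rho_i$ (consistent, of length $3>e+2$), every proper prefix of $0^{n-i+1}$ still computes nothing, and every longer $0^{n-j+1}$ with $j<i$ now computes an inconsistent union; so $0^{n-i+1}$ is $\prec$-minimal and $[\rho_i]$ is removed from $\+P^\tau$ for every $\tau\succeq 0^{n-i+1}$. Along $B=0^\infty$ all eight cylinders are removed and $\+P^B=\emptyset$, so the nonemptiness claim fails. (Your Case~A argument, that inconsistency propagates upward from~$\tau_1$ to~$\tau_2$, is correct; it is the other case that bites.)

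The paper's proof is shorter precisely because it sidesteps this dynamics. It uses the observation, made in the preamble to \cref{thm:bad_oracle_for_consistent_Turing}, that there is an effective $\wock$-computable listing $\seq{\Phi_e}$ of everywhere-consistent higher functionals which nevertheless includes every such functional: copy the axioms of an arbitrary functional and freeze the copy the moment adding the next axiom would create an inconsistency; if the source was already consistent, the copy coincides with it. Your concern that ``everywhere consistent'' is genuinely~$\Pi^1_1$ misses the point --- the listing need only \emph{contain} all consistent functionals, not characterise them, and the copy-and-freeze procedure is uniformly $\wock$-computable. With consistency built in, for each $e$ and $B$ the values $\Phi_e(\tau)$ for $\tau\prec B$ are pairwise comparable, so the removal on behalf of~$e$ is a single cylinder of length~$>e$, and nonemptiness of $\+P^B$ is immediate. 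Replacing ``all functionals plus a minimality test'' with this listing repairs your construction and makes the one-trigger lemma superfluous.
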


\begin{proof}
	Let $\seq{\Phi_e}$ list all consistent higher functionals. For each~$\tau$ and~$e$, if $|\Phi_e(\tau)|>e$, remove $\Phi_e(\tau)$ from~$\+P^\tau$. 
\end{proof}

An oracle satisfying \cref{thm:bad_oracle_for_consistent_Turing} is called \emph{bad for consistent functionals}. \Cref{prop:avoiding_consistently_computable_reals} implies that an oracle which is bad for uniform self-PA is also bad for consistent functionals. It is not clear if these notions are equivalent; it seems possible that the former is stronger, because it deals with all higher oracle co-c.e.\ closed sets, rather than just the one built in \cref{prop:avoiding_consistently_computable_reals}. We do not even know whether a bad oracle for consistent functionals must higher compute~$O$.\footnote{Note that a higher~$\Delta^0_2$ oracle which is bad for consistent Turing functionals cannot compute~$O$ via a consistent functional; this is because~$X$ is computable from~$O$ via a consistent functional, being higher~$\Delta^0_2$.} Note that we do know that if~$A$ is bad for consistent functionals then $\w_1^A>\wock$ \cite[Prop.2.3]{BGMContinuousHigherRandomness}. 

We now introduce another ``highness property'' about which we can say something, and will be useful in what comes next.

\subsection{Tree-collapsing oracles} 
\label{sub:tree_collapsing_oracles}

An oracle $A$ \emph{collapses} $\wock$ if $\w_1^A> \wock$, equivalently, if $O\le_h A$. In \cite{BGMContinuousHigherRandomness} we examined a variety of stronger properties of oracles within the higher $\Delta^0_2$ sets, again based on properties of their approxmations. In particular, we used the notion of a collapsing approximation.

Let $\{X_s\}_{s < \wock}$ be a $\wock$-computable approximation of a set~$X$. For $n<\w$ let $s(n)$ be the least stage $s<\wock$ such that $X_s\rest n  = X\rest n$. The approximation $\seq{X_s}$ is \emph{collapsing} if $\sup_n s(n) = \wock$. Equivalently, if for all $s<\wock$, $X$ does not belong to the closure of the set $\{X_t\,:\, t<s\}$. 

\begin{definition} \label{def:tree-collapsing}
	An oracle~$A$ is \emph{tree-collapsing} if there is a $\Pi^1_1$ tree $T\subseteq 2^{<\w}$ such that $A\in [T]$ but for every $\Delta^1_1$ subtree $S\subseteq T$, $A\notin [S]$. 
\end{definition}

If $\seq{T_s}$ is a $\wock$-computable enumeration of a~$\Pi^1_1$ tree~$T$, then we may assume that each~$T_s$ is a tree. An admissibility argument shows that $\bigcup_s [T_s]$ is the collection of all $A\in [T]$ which lie on some $\Delta^1_1$ subtree of~$T$. Thus~$T$ witnesses that~$A$ is tree-collapsing if and only if letting~$s(n)$ be the least~$s$ such that $A\rest{n}\in T_s$, we have $\sup_n s(n)= \wock$. This shows that if~$A$ is tree-collapsing then~$A$ collapses~$\wock$.

\begin{proposition} \label{prop:tree-collapsing_for_Delta_2}
	If $A$ is higher $\Delta^0_2$, then~$A$ is tree-collapsing if and only if it has a collapsing approximation. 
\end{proposition}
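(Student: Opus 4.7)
The plan is to prove the two directions separately, using the characterisation from the excerpt: given a $\wock$-computable enumeration $\seq{T_s}$ of a $\Pi^1_1$ tree~$T$ by prefix-closed trees, a path $A\in[T]$ lies on some $\Delta^1_1$ subtree of~$T$ if and only if $A\in[T_{\bar s}]$ for some $\bar s<\wock$, equivalently iff $\sup_n t(n)<\wock$ where $t(n)$ is the least stage at which $A\rest n$ has entered~$T$. With this in hand, both implications become dualities between ``collapsing'' conditions on approximations and on trees.

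For $(\Leftarrow)$, I would start from a collapsing approximation $\seq{A_s}$ of~$A$ and take
\[
T = \{\sigma\in 2^{<\w} : (\exists s<\wock)\,\sigma\preceq A_s\},
\]
which is visibly $\Pi^1_1$ and prefix-closed, and contains $A$ as a path because $A\rest n = A_s\rest n$ for large enough~$s$. With the natural enumeration $T_{\bar s} = \{\sigma : (\exists t<\bar s)\,\sigma\preceq A_t\}$, the statement ``$A\in[T_{\bar s}]$'' translates exactly into ``$s(n)<\bar s$ for every~$n$'', where $s(n)$ is the least stage at which $\seq{A_s}$ settles $A\rest n$. Since the approximation is collapsing, $\sup_n s(n)=\wock$, so no such $\bar s<\wock$ can work, and $A$ lies on no $\Delta^1_1$ subtree of~$T$.

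For $(\Rightarrow)$, fix a witness tree~$T$ with enumeration $\seq{T_s}$ and any $\wock$-computable approximation $\seq{C_s}$ of~$A$. The natural idea is to truncate $C_s$ at the longest initial segment already in~$T_s$. Let $n_s = \max\{n : C_s\rest n\in T_s\}$ (with the convention $n_s=\infty$ and $B_s=C_s$ if $C_s\in[T_s]$) and set
\[
B_s = C_s\rest n_s\cat 0^\w.
\]
The sequence $\seq{B_s}$ is $\wock$-computable, and $B_s\to A$ pointwise because at every sufficiently late stage both $C_s\rest n=A\rest n$ and $A\rest n\in T_s$ hold, forcing $n_s\ge n$. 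For the collapsing property I argue by contradiction: suppose some $\bar s<\wock$ is such that for every $n$ there is $s_n\le\bar s$ with $B_{s_n}\rest n=A\rest n$. Either $n\le n_{s_n}$, and then $A\rest n=C_{s_n}\rest n\in T_{s_n}\subseteq T_{\bar s}$ by prefix-closure; or $n>n_{s_n}$, in which case $A\rest n_{s_n}=C_{s_n}\rest n_{s_n}\in T_{\bar s}$ while $A(i)=0$ for $n_{s_n}\le i<n$.

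The main obstacle is the second alternative, where the $0^\w$ tail of $B_s$ might coincidentally match~$A$. I handle it as follows. If the first alternative happens for infinitely many~$n$, prefix-closure forces $A\rest n\in T_{\bar s}$ for every~$n$, so $A\in[T_{\bar s}]$, contradicting tree-collapsing. Otherwise the second alternative holds for all $n>N_0$. If $\sup\{n_{s_n} : n>N_0\}=\infty$, then $A$ has arbitrarily long initial segments in $T_{\bar s}$, same contradiction. If instead that supremum is bounded by some~$M$, then $A(i)=0$ for all $i\ge M$, so $A$ is hyperarithmetic; but then $\{A\rest n : n<\w\}$ is itself a $\Delta^1_1$ subtree of~$T$ on which $A$ lies, again contradicting tree-collapsing. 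The key non-uniform ingredient here is precisely that a tree-collapsing oracle is never hyperarithmetic, which is what rescues the naive $0^\w$-padding.
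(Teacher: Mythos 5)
Your proposal is correct and follows the same strategy as the paper: the same tree $T=\{\sigma : (\exists s)\,\sigma\preceq A_s\}$ for $(\Leftarrow)$, and the same truncated-and-padded approximation $B_s$ (longest prefix of $A_s$ on $T_s$, extended by $0^\infty$) for $(\Rightarrow)$. The paper compresses your case analysis in the $(\Rightarrow)$ direction into the parenthetical remark that $A$ is not computable and so cannot end with a string of zeros; your more explicit treatment of the $0^\omega$-padding edge cases is a careful unpacking of that same observation.
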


\begin{proof}
	If $\seq{A_s}$ is a collapsing approximation of~$A$, we let 
	 \[
	 T = \left\{ A_s\rest{n} \,:\,  n<\w\andd s<\wock \right\}.
	 \]
	 In the other direction, let $\seq{A_s}$ be a $\wock$-computable approximation of~$A$ and let~$T$ witness that~$A$ is tree-collapsing. For each~$s$ let $\s_s\in 2^{\le \w}$ be the longest initial segment of~$A_s$ on~$T_s$. Let $B_s = \s_s$ if $\s_s$ is infinite, otherwise let $B_s = \s_s\conc 0^\infty$. Then $\seq{B_s}$ is a collapsing approximation of~$A$ (using the fact that~$A$ is not computable, so does not end with a string of zeros). 
\end{proof}

In some sense, tree-collapsing oracles are better than having no oracle at all: they allow us to replace processes of length~$\wock$ by processes of length~$\w$, thereby allowing us to revert to some time tricks. Compare the following to \cref{th:prefix_free_failure}. 

\begin{proposition} \label{prop:tree_collapsing_and_Pi11_antichains}
	If~$A$ is tree collapsible then every higher $A$-c.e.\ open set is generated by a higher $A$-computable prefix-free set. 
\end{proposition}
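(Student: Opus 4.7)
The plan is to use the tree-collapsing property of $A$ to reinstate a classical time trick. Let $T$ be a $\Pi^1_1$ tree witnessing the tree-collapsing of $A$, with $\wock$-approximation $\seq{T_s}_{s<\wock}$ by $\Delta^1_1$ trees, and set $s(n) = \min\{s<\wock : A\rest{n} \in T_s\}$. Then $s(\cdot)$ is non-decreasing, higher $A$-computable (determined by the finite initial segment $A\rest{n}$ followed by a $\wock$-search), and crucially $\sup_n s(n) = \wock$ by the tree-collapsing hypothesis. The function $s(\cdot)$ will play the role of the length-as-stage clock available in the lower setting but absent from the proof of \cref{th:prefix_free_failure}.

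Given a higher $A$-c.e.\ open set $\+U = [V]^\prec$, I would write $V = W^A$ for some $\Pi^1_1$ set $W \subseteq 2^{<\w}\times 2^{<\w}$ with $\wock$-approximation $\seq{W_s}_{s<\wock}$, and abbreviate $W_s^\tau = \{\rho : (\exists\,\tau'\preceq \tau)\ (\tau',\rho) \in W_s\}$. Then define
\[
	V' = \left\{\s \in 2^{<\w} : (\exists \s'\preceq \s)\ \s' \in W_{s(|\s|)}^{A\rest{|\s|}},\ \text{and no}\ \s_0\prec \s\ \text{has any}\ \s''\preceq \s_0\ \text{in}\ W_{s(|\s_0|)}^{A\rest{|\s_0|}}\right\}.
\]
This is the classical \emph{minimal witness} construction of a prefix-free generator, with $s(|\s|)$ taking the place of the usual length-as-stage clock.

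Three things must be checked, and I would handle them in order. Prefix-freeness is built in: if $\s_1 \prec \s_2$ both lay in $V'$, then taking $\s_0 = \s_1$ in the minimality clause for $\s_2$ would contradict the existence clause satisfied by $\s_1$. For $[V']^\prec = [V]^\prec$, the nontrivial direction is covered by the following: any $X \in [V]^\prec$ has a witness $\rho \prec X$ with $(\tau_0, \rho) \in W_{s_0}$ for some $\tau_0 \prec A$ and some $s_0<\wock$; cofinality of $n\mapsto s(n)$ in $\wock$ then produces $n \geq \max(|\tau_0|, |\rho|)$ with $s(n) \geq s_0$, and the smallest such $X\rest{n}$ sits in $V'$ by minimality. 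The main obstacle, and the place where tree-collapsing really bites, is showing that \emph{both} the defining condition and its negation are higher $A$-c.e.: both clauses of the definition are $\Sigma_1$-over-$L_{\wock}$ once $A\rest{|\s|}$ is supplied (the existence of $s(|\s|)$ is $\Sigma_1$, and all further checks are bounded in $L_{s(|\s|)+1}$), and symmetrically their negations are $\Sigma_1$-over-$L_{\wock}$ (the inner ``$\forall \s'\preceq \s$'' being a bounded quantifier). Consequently $V'$ and its complement arise as $A$-sections of $\Pi^1_1$ sets of pairs $(\tau, \s)$ of equal length, as required. Tree-collapsing is used precisely twice: cofinality of $s(\cdot)$ for the coverage argument, and higher $A$-computability of $s(n)$ from the finite segment $A\rest{n}$ for the complexity bound.
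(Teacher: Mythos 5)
Correct, and essentially the same approach as the paper: both exploit the first-entry function $s(n)=\min\{t<\wock : A\rest n\in T_t\}$ supplied by tree-collapsing as a $\wock$-cofinal clock that reinstates the classical length-as-stage trick. The paper realizes the idea as a recursive $\wock$-enumeration --- adding a pair $(\tau,\sigma)$ at stage $t+1$ when $\tau$ first enters $T_{t+1}$, subject to a compactness condition $[\sigma]\subseteq\+{W}_t^\tau$ and a dynamic prefix-free check against $\+{V}_t^\tau$ --- while your declarative minimal-witness formula builds the same generator directly, sidestepping the compactness step and making both the prefix-freeness and the higher $A$-computability of the complement a little more transparent to check.
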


\begin{proof}
	Let~$T$ witness that~$A$ is tree-collapsing, and let $W$ generate an oracle c.e.\ open set. We assume that for all pairs $(\tau,\s)\in W$ we have $|\tau|\ge |\s|$. 

	Define an enumeration $\seq{V_s}_{s<\wock}$ as follows. Start with $V_0 = \emptyset$, and at limit stages take unions. Suppose that~$V_s$ has been defined. We enumerate a new pair $(\tau,\s)$ into $V_{s+1}$ if $\tau\in T_{s+1}\setminus T_s$, $|\s|= |\tau|$, $[\s]\subseteq \+W_s^\tau$ and $[\s]\cap \+V_s^\tau = \emptyset$. 

	For all~$B$ we have $\+V^B \subseteq \+W^B$, and $V^B$ is a higher $B$-computable prefix-free set. On the other hand, if $[\s]\subseteq \+W^A$ then by compactness, there is some $\tau\prec A$ and some $s<\wock$ such that $[\s]\subseteq \+W_s^\tau$. There is some $\tau'\prec A$, extending~$\tau$, which is enumerated into $T_{t+1}$ for some $t\ge s$. Then $[\s]\subseteq \+V^{\tau'}_{t+1}$, which shows that $\+W^A = \+V^A$. 
\end{proof}


An oracle is good if it is not bad. 

\begin{proposition} \label{prop:tree_collapsing_and_good_for_consistent_functionals}
	If $A$ is tree-collapsing then it is good for consistent functionals. 
\end{proposition}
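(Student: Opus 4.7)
The plan is to build, from an arbitrary (not necessarily consistent) $\Pi^1_1$ functional $\Psi$ with $\Psi(A) = X$, an everywhere consistent $\Pi^1_1$ functional $\Phi$ with $\Phi(A) = X$. The tree-collapsing witness $T$ furnishes a $\wock$-computable enumeration $\langle T_s \rangle_{s<\wock}$; for $\tau \in T$ let $s(\tau)$ denote the least $s$ with $\tau \in T_s$, a partial $\wock$-recursive function with domain $T$. The tree-collapsing hypothesis says precisely that $\sup_n s(A \rest n) = \wock$. The key idea is to use $s(\tau)$ as a time-stamp and to admit into $\Phi$ only those axioms of $\Psi$ that are already ``locally consistent'' at stage $s(\tau)$.

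Specifically, I would set $(\tau,\sigma) \in \Phi$ iff
\begin{enumerate}[label=(\roman*)]
\item $\tau \in T$;
\item there is some $\tau' \preceq \tau$ with $(\tau',\sigma) \in \Psi_{s(\tau)}$;
\item $\Psi_{s(\tau)}$ is consistent on prefixes of $\tau$, in the sense that for any two pairs $(\tau_1,\sigma_1),(\tau_2,\sigma_2) \in \Psi_{s(\tau)}$ with $\tau_1,\tau_2 \preceq \tau$, the outputs $\sigma_1,\sigma_2$ are compatible.
\end{enumerate}
Condition (i) is $\Pi^1_1$; once it holds, $s(\tau)$ is defined and conditions (ii), (iii) are $\wock$-computable at that stage, so $\Phi$ is $\Pi^1_1$.

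For global consistency, suppose $(\tau_1,\sigma_1), (\tau_2,\sigma_2) \in \Phi$ with $\tau_1 \preceq \tau_2$. Tree monotonicity gives $s(\tau_1) \le s(\tau_2)$, so the witnessing axioms from clause (ii) for each both lie in $\Psi_{s(\tau_2)}$ with oracle-side $\preceq \tau_2$; clause (iii) at $\tau_2$ then forces $\sigma_1,\sigma_2$ compatible. For $\Phi(A) = X$, the inclusion $\Phi(A) \preceq X$ is immediate: any axiom of $\Phi$ along $A$ pulls back via (ii) to an axiom of $\Psi$ on a prefix of $A$, hence outputs a prefix of $X$. For the reverse, fix $n$, choose $\tau_0 \prec A$ and $\sigma_0 \succeq X \rest n$ with $(\tau_0,\sigma_0) \in \Psi_{s_0}$ (possible since $\Psi(A)=X$), and use $\sup_n s(A \rest n) = \wock$ to pick $\tau \prec A$ with $\tau \succeq \tau_0$ and $s(\tau) \ge s_0$. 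Clause (iii) at this $\tau$ is automatic, since every prefix of $\tau$ is a prefix of $A$ and $\Psi$ is consistent on prefixes of $A$; so $(\tau,\sigma_0) \in \Phi$.

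The main subtlety is calibrating the definition of $\Phi$: too restrictive and one loses axioms needed to recover $X$ on $A$, too permissive and axioms on incomparable branches of $T$ spawn global inconsistencies. The ``local consistency at stage $s(\tau)$ over prefixes of $\tau$'' clause is the right compromise: it is automatic along $A$ because $\Psi(A)=X$ is itself consistent, while the monotone time-stamping $s(\tau_1) \le s(\tau_2)$ propagates consistency along the tree and rules out conflicts at arbitrary oracles. The only real work is verifying these three points; all of it is standard once the tree-collapsing witness is in hand.
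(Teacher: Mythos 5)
Your proof is correct and takes essentially the same approach as the paper's: use the first stage $s(\tau)$ at which $\tau$ enters the tree-collapsing witness $T$ as a monotone time-stamp, so that the usual time trick can be replayed along $T$. The paper's sketch is terser (it records a single aggregated axiom $(\tau,\Phi_s(\tau))$ whenever $\tau$ enters $T$ at stage $s+1$ with $\Phi_s(\tau)$ consistent, rather than your several pullback axioms filtered by local consistency at stage $s(\tau)$), but the two definitions produce the same continuous map, and your verification of $\Pi^1_1$-ness, global consistency, and correctness on $A$ is sound.
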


\begin{proof}
	Similar to the proof of \cite[Prop.2.2]{BGMContinuousHigherRandomness}, which proves the same for oracles with collapsing approximations. Letting $\Phi$ be a higher Turing functional, we copy some $\Phi$-computations into a consistent functional~$\Psi$; if $\tau\in T_{s+1}\setminus T_s$, and $\s = \Phi_s(\tau)$ is consistent, then we add the axiom $(\tau,\s)$ to $\Psi_{s+1}$. 
\end{proof}

We can pay a debt made in \cite{BGMContinuousHigherRandomness}:

\begin{corollary}
There exists a higher $\Delta^0_2$ oracle $A$ such that $\omega_1^A > \wock$ and such that~$A$ does not have a collapsing approximation.
\end{corollary}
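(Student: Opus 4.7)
The plan is simply to chain the three results established immediately before the corollary. First, by \cref{thm:bad_oracle_for_consistent_Turing} together with the parenthetical remark that the witness may be taken higher $\Delta^0_2$, I fix an oracle $A$ that is higher $\Delta^0_2$ and bad for consistent functionals. The footnote attached to \cref{thm:bad_oracle_for_consistent_Turing} (citing \cite[Prop.~2.3]{BGMContinuousHigherRandomness}) already records that every oracle which is bad for consistent functionals satisfies $\omega_1^A > \wock$, so the first of the two required conditions is automatic.

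Second, I invoke the contrapositive of \cref{prop:tree_collapsing_and_good_for_consistent_functionals}: every tree-collapsing oracle is good for consistent functionals, so $A$, being bad for consistent functionals, cannot be tree-collapsing. Finally, because $A$ is higher $\Delta^0_2$, \cref{prop:tree-collapsing_for_Delta_2} tells me that for $A$ the notions ``tree-collapsing'' and ``admits a collapsing approximation'' coincide. Combining this with the previous step, $A$ admits no collapsing approximation, completing the proof.

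There is no genuine obstacle here; all the heavy lifting was performed in \cref{thm:bad_oracle_for_consistent_Turing}, \cref{prop:tree-collapsing_for_Delta_2}, and \cref{prop:tree_collapsing_and_good_for_consistent_functionals}, and the corollary is really just a bookkeeping statement that extracts the promised ``debt'' from those three facts. The only thing to verify when writing it out carefully is that the construction in \cref{thm:bad_oracle_for_consistent_Turing} genuinely produces a higher $\Delta^0_2$ witness, which is precisely what the remark following the theorem asserts (the shrinking path $\seq{T_{\alpha}}_{\alpha\prec \xi}$ determines $A$ as the limit of the higher $\Delta^0_2$-approximation $\bigcup_{\alpha\prec\xi_s}\stem(T_\alpha)$).
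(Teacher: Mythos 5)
Your proof is correct and follows exactly the same logic as the paper's own (terser) proof: combine the higher $\Delta^0_2$ bad oracle from \cref{thm:bad_oracle_for_consistent_Turing}, the fact that bad-for-consistent-functionals implies $\omega_1^A > \wock$, and \cref{prop:tree-collapsing_for_Delta_2,prop:tree_collapsing_and_good_for_consistent_functionals} to rule out a collapsing approximation. (One small slip: the citation of \cite[Prop.~2.3]{BGMContinuousHigherRandomness} for $\omega_1^A > \wock$ appears in the main text, not in the footnote attached to \cref{thm:bad_oracle_for_consistent_Turing}, but this does not affect the argument.)
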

\begin{proof}
Take~$A$ to be higher $\Delta^0_2$ and bad for consistent functionals; by \cref{prop:tree-collapsing_for_Delta_2,prop:tree_collapsing_and_good_for_consistent_functionals}, $A$ does not have a collapsing approximation, but we know that $\w_1^A>\wock$. 
\end{proof}

\subsection{fin-h reduction} 
\label{sub:fin_h_reduction}

The first notion of higher continuous reductions was introduced by Hjorth and Nies in \cite{HjorthNies2007}. They defined a \emph{fin-h functional} to be a higher functional~$\Phi$ which (a) as a set of pairs, is a monotone function from strings to strings; and (b) $\dom \Phi$ is a subtree of~$2^{<\w}$. This means that for all~$\tau$, when we see $\tau\in \dom \Phi$, we will have seen all proper initial segments of~$\tau$ in~$\dom \Phi$ as well, and $\Phi(\tau)$ is fixed from that time onwards. Hjorth and Nies write $Y\le_{\finh} X$ if $\Phi(X)=Y$ for some fin-h functional~$\Phi$. 

Certainly every fin-h functional is consistent, but a consistent functional may allow axioms to be added in reverse order some times. We may expect then that a bad oracle construction could separate fin-h from general reductions by consistent functionals. This is not so. 

\begin{theorem}
Suppose that $Y = \Phi(X)$ for a consistent functional~$\Phi$. Then $Y\le_{\finh} X$. 
\end{theorem}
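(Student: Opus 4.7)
My plan is to construct $\Psi$ by a tree-closure of $\Phi$. Specifically, I would define
\[
\Psi = \bigl\{(\tau, \sigma): |\sigma| = |\tau|, \text{ and } \forall \tau_0 \preceq \tau,\ \exists (\tau^*, \sigma^*) \in \Phi \text{ with } \tau^* \succeq \tau_0 \text{ and } \sigma^* \succeq \sigma \rest{|\tau_0|}\bigr\}.
\]
The $\Pi^1_1$-ness is immediate, since the outer quantifier is bounded over the finitely many prefixes of~$\tau$ and the inner existential is a number quantifier applied to the $\Pi^1_1$ set~$\Phi$. The tree-domain property and monotonicity both follow directly from the universal-over-prefixes clause: restricting it to shorter strings yields the same condition on those strings, and comparing two pairs $(\tau_1,\sigma_1),(\tau_2,\sigma_2)\in\Psi$ with $\tau_1\preceq\tau_2$ via the $\tau_2$-condition at $\tau_0=\tau_1$ forces $\sigma_1\preceq\sigma_2$. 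For $\Psi(X)=Y$ given $\Phi(X)=Y\in 2^\w$, one checks $(X\rest n,Y\rest n)\in\Psi$ for each~$n$: at any $\tau_0\preceq X\rest n$ the condition is witnessed by an on-$X$ axiom $(\tau^*,\sigma^*)\in\Phi$ with $|\sigma^*|\ge|\tau_0|$ and $\tau^*\succeq\tau_0$, which exists because $\Phi(X)=Y\in 2^\w$, and then $\sigma^*\prec Y$ gives $\sigma^*\succeq Y\rest{|\tau_0|}$.

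The delicate step is the function property: given $(\tau,\sigma_1),(\tau,\sigma_2)\in\Psi$ one must show $\sigma_1=\sigma_2$. Applying the condition at $\tau_0=\tau$, one obtains witnesses $\tau^*_i\succeq\tau$ with $\sigma^*_i\succeq\sigma_i$ in $\Phi$. If $\tau^*_1,\tau^*_2$ are comparable, consistency of~$\Phi$ makes $\sigma^*_1,\sigma^*_2$ compatible, and since both~$\sigma_i$ have length~$|\tau|$ they must coincide. The difficulty is the case of incomparable extension witnesses, for which consistency gives no direct constraint and the naive definition may in principle be multi-valued.

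The hard part will be handling this incomparable-extension case, and I expect one of two refinements to close the argument. The first option is to restrict the witness condition to $\tau^*\preceq\tau$: this forces pairwise comparability and hence uniqueness of $\sigma$ by consistency, but then one must re-verify the tree-domain clause and the $\Psi(X)=Y$ clause, using the fact that for each~$n$ some $\Phi$-axiom with $\tau^*\preceq X\rest n$ of length $\ge n$ can be extracted (this is where admissibility of~$\wock$ enters, by bounding the stages at which the needed axioms appear). The second option is to build $\Psi$ stage-by-stage along the $\wock$-enumeration of~$\Phi$, committing to a canonical chain for each new $(\tau^*,\sigma^*)\in\Phi_s$ (processed in a canonical order, e.g., shortest $|\sigma^*|$ first) and skipping any proposed axiom that conflicts with previous commitments; the verification that on-$X$ axioms never conflict then reduces to an application of consistency of~$\Phi$ on~$X$, much in the spirit of~\cref{prop:tree_collapsing_and_good_for_consistent_functionals}. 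Either refinement produces the desired fin-h~$\Psi$ with $\Psi(X)=Y$.
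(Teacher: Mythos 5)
There is a genuine gap, and it is worth naming precisely because the paper explicitly flags it: the proof of this theorem must be non-uniform, and your proposal is attempting a uniform construction of~$\Psi$ from~$\Phi$ alone.

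You correctly identify the function property as the crux, but neither proposed refinement closes the gap. For option~(a) --- restricting witnesses to $\tau^* \preceq \tau$ --- you claim that ``for each~$n$ some $\Phi$-axiom with $\tau^* \preceq X\rest n$ of length $\geq n$ can be extracted,'' appealing to admissibility. This is false: admissibility bounds the \emph{stages} at which axioms appear, not their \emph{oracle use}. A consistent~$\Phi$ with $\Phi(X)=Y$ total may have only axioms of the form $(X\rest{2^n}, Y\rest n)$, in which case no axiom with use~$\leq n$ has output of length~$\geq n$, and your~$\Psi$ fails to be total on~$X$. For option~(b) --- committing axioms along the $\wock$-enumeration of~$\Phi$ in a canonical order and skipping conflicts --- this is essentially the argument of \cref{prop:tree_collapsing_and_good_for_consistent_functionals}, but that argument \emph{requires} a tree-collapsing witness for~$X$ to schedule the commitments. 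Without it, you may commit an axiom $\Psi(\tau_0) = \rho$ triggered by some $(\tau^*, \sigma^*)\in\Phi_s$ with $\tau^* \succ \tau_0$ but $\tau^*$ \emph{incomparable} with~$X$; consistency of~$\Phi$ constrains outputs only along common extensions, so nothing prevents $\rho$ from being incompatible with $Y\rest{|\tau_0|}$, after which the fin-h freezing rule blocks the needed axiom for~$X$ forever. Your invocation of ``consistency of~$\Phi$ on~$X$'' does not reach these off-$X$ conflicts.

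The idea you are missing is the dichotomy on whether~$X$ is tree-collapsing. If it is, option~(b) in the form of \cref{prop:tree_collapsing_and_good_for_consistent_functionals} works: the collapsing tree provides the schedule. If~$X$ is \emph{not} tree-collapsing, the paper shows that~$X$ already lies on some stage-$s$ approximant~$T_s$ of a canonical tree built from~$\Phi$, where $T_s$ consists of strings~$\s$ such that $\Phi_s$ has arbitrarily long outputs above~$\s$. Since~$\Phi_s$ is a $\Delta^1_1$ object known ``all at once,'' one can define~$\Psi$ in closed form on~$T_s$ --- taking $\Psi(\s)$ to be the longest $\rho$ with $|\rho|\le|\s|$ compatible with $\Phi_s(\tau)$ for every $\tau\succeq\s$ --- and the well-definedness precisely exploits that $\s\in T_s$ guarantees some $\tau\succeq\s$ with $|\Phi_s(\tau)|\ge|\s|$, pinning down all candidate~$\rho$'s on a single chain. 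No stage-by-stage scheduling is needed in this case. This case split is essential, and no single $\Psi$ defined uniformly from~$\Phi$ can serve both cases.
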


The proof is non-uniform, and this non-uniformity is necessary; it is not difficult to meet a single requirement when trying to separate between consistent and fin-h reductions. 

\begin{proof}
If $X$ is tree-collapsing, then the construction of \cref{prop:tree_collapsing_and_good_for_consistent_functionals} actually gives a fin-h functional. 

\smallskip

Suppose that~$X$ is not tree-collapsing. For $s<\wock$ let 
\[
	T_s = \left\{ \s\in 2^{<\w} \,:\,  (\forall n<\w)(\exists \tau\succeq \s)\,\,|\Phi_s(\tau)|\ge n \right\}
\]
and let $T = \bigcup_s T_s$. We first note that $X\in [T]$; for all $\s\prec X$, for all~$n$, we know that there is some $\tau\succeq \s$ with $|\Phi(\tau)|\ge n$; by admissibility, for each~$n$ such~$\tau$ appears by some bounded stage, i.e., $\s\in T_s$ for some~$s$. The assumption that~$X$ is not tree-collapsing means that $X\in [T_s]$ for some~$s$; fix such~$s$. 

We define a fin-h functional $\Psi$ with $\dom \Psi = T_s$. For $\s\in T_{s}$ we let~$\Psi(\s)$ be the longest string~$\rho$ with $|\rho|\le |\s|$ and~$\rho$ comparable with $\Phi_s(\tau)$ for all $\tau\succeq \s$. Because $\s\in T_s$, there is some $\tau\succeq \s$ with $|\Phi_s(\tau)|\ge |\s|$, so we cannot have two incomparable strings~$\rho$ which are candidates for~$\Psi(\s)$. It is not difficult to see that~$\Psi$ is a fin-h functional. We argue that $\Psi(X)=\Phi(X)$. 

We show that $\Psi(X)\preceq \Phi(X)$.  For $\s\prec X$ find $\tau\succeq \s$ with $|\Phi_s(\tau)|\ge |\s|$, so $\Psi(\s)\preceq \Phi_s(\tau)$. Since~$\Phi$ is consistent, $\Phi(\s)$ is comparable with $\Phi_s(\tau)$; so $\Psi(\s)$ is comparable with $\Phi(\s)$. 

It remains to show that $\Psi(X)$ is total. Let $n<\w$; find $\s\prec X$, $|\s|\ge n$, such that $|\Phi(\s)|\ge n$. For all $\tau\succeq \s$, $\Phi(\s)$ is comparable with $\Phi_s(\tau)$; so $|\Psi(\s)|\ge n$. 
%
\end{proof}

\begin{remark}
	The proof above shows that if~$X$ is not tree-collapsible and $Y\le_{\finh} X$ then there is some hyperarithmetic~$H$ such that $Y\le_{\Tur} X\oplus H$. 
\end{remark}

\subsection{Bad random oracles for Turing functionals} \label{sec:bad_random}

We now investigate properties of bad oracles in a different direction: we show that an oracle bad for consistent functionals can be higher Martin-L\"{o}f-random. Recall that a \emph{higher $\ML$-test} is a sequence $\seq{\+U_n}$ of uniformly $\Pi^1_1$-open sets with $\leb(\+U_n)\le 2^{-n}$; here $\leb$ denotes the fair-coin measure on Cantor space. The intersection $\bigcap_n \+U_n$ is called \emph{higher $\ML$-null}, and a sequence $X\in 2^\w$ is \emph{higher $\ML$-random} if it is not an element of any higher $\ML$-null set. In this subsection we prove:

\begin{theorem} \label{th:badmlrandomoracle}
There is a higher $\ML$-random oracle~$A$ which is bad for consistent functionals. 
\end{theorem}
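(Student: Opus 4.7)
The plan is to adapt the construction of \cref{thm:bad_oracle_for_consistent_Turing} so that the oracle $A$ is additionally higher $\ML$-random. The diagonalization against consistent functionals in the proof of \cref{thm:bad_oracle_for_consistent_Turing} works for any $A$ in the intersection $\bigcap_n [T_{\xi\rest n}]$; so it suffices to arrange this intersection to be a $\Pi^1_1$-closed set of positive Lebesgue measure. Any such set contains higher $\ML$-random elements, since the non-higher-$\ML$-randoms form a single higher $\ML$-null set (the intersection defining the universal higher $\ML$-test has full-measure complement). We then take $A$ to be any higher $\ML$-random point in $\bigcap_n [T_{\xi\rest n}]$; the verifications of \cref{thm:bad_oracle_for_consistent_Turing} that $X\le_{\wock\Tur} A$ and that $\Phi_e(A)\ne X$ for every everywhere consistent $\Phi_e$ carry over unchanged.

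For the measure preservation, I would design a measure-preserving variant of the treesh-bone. Assigning a measure budget $\delta_e = 2^{-(e+2)}$ at level $e$ (so $\sum_e \delta_e < 1/2$), at each tree $T \in \+T$ at level $e$ the ribs $T\dbrak{k}$ are chosen as sub-cylinders $T \cap [\s_k]$ with the $\s_k$ dense in $T$ but long enough that $\sum_k \leb([T\dbrak{k}]) \le \delta_e \leb([T])$; then $\leb([\Nar(T)]) \ge (1-\delta_e) \leb([T])$. A path $\xi$ taking the $\w$-outcome cofinitely often then preserves positive measure up to the bounded summable factor $\prod_e (1-\delta_e)$.

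The main obstacle is the finite outcomes $\alpha\cat k \prec \xi$ for $k<\w$, which in the original construction pass to a thin rib of potentially vanishing measure; taken infinitely often along $\xi$ they would collapse the intersection to measure zero. My strategy is to redirect each finite outcome through the fatter subtree $T_\alpha \setminus [U_e^\alpha]$, where $U_e^\alpha := \{\tau \in T_\alpha \,:\, \Phi_e(\tau) \succeq X_\alpha\cat 0\}$ is the antichain of ``bad'' nodes. When a rib $T_\alpha\dbrak{k}$ witnesses the finite outcome (contains no element of $U_e^\alpha$) and moreover $\leb([U_e^\alpha] \cap T_\alpha) < \delta_e \leb([T_\alpha])$, we pass instead to $T_\alpha \setminus [U_e^\alpha]$; any $A$ there still has no initial segment in $U_e^\alpha$, so $\Phi_e(A) \not\succeq X_\alpha\cat 0 = X\rest(|\alpha|+1)$, and the new tree has measure at least $(1-\delta_e)\leb([T_\alpha])$. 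In the complementary regime the construction takes an $\w$-outcome, justified either by the original density-along-ribs argument of \cref{thm:bad_oracle_for_consistent_Turing} (when every rib contains a bad node) or by a measure-theoretic density argument exploiting $\leb([U_e^\alpha] \cap T_\alpha) \ge \delta_e \leb([T_\alpha])$ together with consistency of $\Phi_e$ to force $\Phi_e(A) \succeq X_\alpha\cat 0 \ne X$. The technical heart of the proof is to arrange this amended decision rule so that the cases partition cleanly, the approximations $\xi_s$ are monotone in the required sense for the true path $\xi$ to converge by admissibility as in \cref{thm:bad_oracle_for_consistent_Turing}, and the measure accounting along $\xi$ yields $\leb\bigl(\bigcap_n [T_{\xi\rest n}]\bigr) > 0$.
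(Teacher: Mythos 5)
Your approach is genuinely different from the paper's, which never tries to make $\bigcap_n[T_n]$ have positive measure. Instead, the paper starts from a $\Sigma^1_1$ closed set $[T]$ of measure at least $1/2$ consisting \emph{only} of higher $\ML$-random reals (such a set exists because higher $\ML$-randomness has a universal test), and runs the treesh-bone \emph{inside} $[T]$. The final oracle $A\in\bigcap_\alpha[T_\alpha]\subseteq[T]$ is then automatically higher $\ML$-random, so no measure bookkeeping is needed. Because $T$ is only co-c.e., its splitting structure moves as the approximating trees $T_s$ shrink, which makes $\Nar$ and $T\mapsto T\dbrak{k}$ non-monotone; the paper handles this by fixing a computable sequence of ``uniform splitting levels'' $f(n)$ and using modified operations $\Nar_k$ and strategies in $(\w^2+1)^{<\w}$, and it diagonalizes against the closed set from \cref{prop:avoiding_consistently_computable_reals}.

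There is a concrete gap in your complementary regime. You want to take the $\omega$-outcome, set $X(|\alpha|)=1$, and justify the diagonalization by ``a measure-theoretic density argument exploiting $\leb([U_e^\alpha]\cap T_\alpha)\ge\delta_e\leb([T_\alpha])$.'' No such argument is available when only a single bit of $X$ is decided at a time. Take $\Phi_e$ to be the identity functional at level $0$: then $[U_e^\alpha]=[0]$ and the analogous set for the bit $1$ is $[1]$, each of relative measure exactly $1/2$ in $[T_\alpha]=2^\w$, so neither is below your budget $\delta_0=2^{-2}$. Having $\leb([U_e^\alpha])$ large does not ``force'' anything about the $A$'s you keep; and whichever bit you pick for $X(|\alpha|)$, avoiding the corresponding preimage costs half of $[T_\alpha]$, which repeated over all levels kills the measure. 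A repair exists — let level $e$ decide many bits at once: by consistency of $\Phi_e$ the sets $\{A:(\exists\tau\prec A)\,\Phi_e(\tau)\succeq\rho\}$ are pairwise disjoint as $\rho$ ranges over the $2^m$ extensions of $X\rest e$ of length $e+m$, so one of them has relative measure at most $2^{-m}\le\delta_e$ — but this is a new ingredient rather than a filling-in of what you wrote, and you would still need to explain how the approximation locates the right $\rho$ as $\Phi_{e,s}$ is enumerated.

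A second issue you name but do not resolve: $T_\alpha\setminus[U_e^\alpha]$ is a $\Sigma^1_1$ closed set that shrinks as $U_e^\alpha$ is enumerated, not a $\Delta^1_1$ tree. Once the nodes of the tree-of-trees are themselves moving targets, $\Nar$ and the ribs are no longer monotone in the ambient tree — exactly the obstruction the paper isolates and handles with uniform splitting levels — and your outline does not supply a substitute. It also threatens the reduction $\xi\le_{\wock\Tur}A$: that argument relies on the cones $[T_{\alpha\cat k}]$ for distinct finite $k$ being pairwise disjoint and disjoint from $[\Nar(T_\alpha)]$, which fails if every finite outcome of $\alpha$ points at the same shrinking set $T_\alpha\setminus[U_e^\alpha]$.
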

Further, we can make~$A$ higher $\Delta^0_2$.

\subsubsection{Uniform splitting levels}

There is a nonempty $\Sigma^1_1$ closed set consisting only of higher ML-random sequences. This set is the collection of paths through a $\Sigma^1_1$ tree $T\subset 2^{<\w}$; this tree has a co-enumeration: a $\wock$-computable sequence $\seq{T_s}_{s<\wock}$ of trees such that $T = \bigcap_s T_s$. 

Since~$T$ does not contain a hyperarithmetic path, it is perfect. The general idea of the construction is to mimic our previous constructions of bad oracles, but instead of starting with $2^{<\w}$, to start with~$T$. Thus we will approximate during the construction both the path of strategies and the initial tree, which at stage~$s$ will be~$T_s$. 

However, we cannot naively just perform the same construction with the only modification being $T_{\epsilon,s} = T_s$. This is because under the standard definitions, our operations are not monotone. For example, there may be $s<t$, so $T_s\supseteq T_t$, but $\Nar(T_s)\nsupseteq \Nar(T_t)$: this is because the disappearance of nodes from~$T_s$ may cause branching nodes of odd level to become branching nodes of even level. This would completely mess up the construction. 

However, with a little cost, we can avoid this problem by making sure that we only take splittings at uniform levels across all trees. This is done as follows. 

\medskip

For a measurable set~$\+A$ and a non-null measurable set~$\+B$ we let $\leb(\+A \given \+B)$ denote the conditional measure $\leb(\+A\cap \+B)/\leb(\+B)$. For a string $\s\in 2^{<\w}$ we write $\leb(\+A\given \s)$ for $\leb(\+A \given [\s])$. The following is well-known, and follows from additivity of measure.

\begin{lemma} \label{lemma:easy}
Let $\s$ be a string and $\+A$ a measurable set such that $\lambda(\+A\given  \s) \geq 2^{-n}$. Then there are at least two extensions $\tau_1,\tau_2$ of $\s$ of length $|\s|+n+1$ so that for both $i \in \{1,2\}$ we have $\leb(\+A\given \tau_i) \geq 2^{- n-1}$.
\end{lemma}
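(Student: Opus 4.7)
The plan is a standard averaging argument, exploiting that the $2^{n+1}$ cylinders below $[\s]$ of depth $n+1$ partition $[\s]$.

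First I would list the $2^{n+1}$ extensions of $\s$ of length $|\s|+n+1$; their cylinders partition $[\s]$ into pieces of equal measure $2^{-n-1}\leb([\s])$. By finite additivity of $\leb$ and the definition of conditional measure,
\[
\sum_{\tau} \leb(\+A \given \tau)\cdot 2^{-n-1} \;=\; \leb(\+A \given \s) \;\ge\; 2^{-n},
\]
so $\sum_{\tau} \leb(\+A \given \tau) \ge 2$, where $\tau$ ranges over the $2^{n+1}$ extensions.

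Next I would argue by contradiction. Suppose at most one extension, say $\tau^{*}$, satisfies $\leb(\+A \given \tau^{*}) \ge 2^{-n-1}$. Since every conditional measure is at most $1$, and each of the remaining $2^{n+1}-1$ extensions $\tau$ satisfies $\leb(\+A \given \tau) < 2^{-n-1}$, we would have
\[
\sum_{\tau} \leb(\+A \given \tau) \;<\; 1 + (2^{n+1}-1)\cdot 2^{-n-1} \;=\; 2 - 2^{-n-1} \;<\; 2,
\]
contradicting the lower bound above. Hence at least two distinct extensions $\tau_1,\tau_2$ must satisfy $\leb(\+A \given \tau_i) \ge 2^{-n-1}$, as required.

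There is no real obstacle here; the only point worth flagging is the choice of depth $n+1$ (rather than $n$): one needs the sum bound to strictly exceed $1$ so that a single heavy extension cannot account for it, which is precisely what passing from $2^{-n}$ to $2 \cdot 2^{-n-1}$ buys.
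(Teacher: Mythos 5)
Your proof is correct, and it is the standard averaging argument the paper alludes to when it says the lemma ``follows from additivity of measure'' (the paper gives no further detail). Your version spells out the additivity step and the contradiction cleanly; nothing more is needed.
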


Now let $f(1) = 0$ and $f(n+1) = f(n)+n+1$. Let $\+A\subseteq 2^\w$ with $\leb(\+A)\ge 1/2$. Then for all~$n\ge 1$, every string~$\tau$ of length $f(n)$ for which $\leb(\+A\given \tau)\ge 2^{-n}$ has at least two extensions~$\s$ of length~$f(n+1)$ for which $\leb(\+A\given \s)\ge 2^{-n-1}$. 

\smallskip

Consider again the higher co-c.e.\ tree~$T$ whose paths are all higher ML-random; as mentioned, it has a co-enumeration $\seq{T_s}$, with $s<t$ implying $T_s\supseteq T_t$, and each~$T_s$ a tree. We may assume that $\leb([T])\ge 1/2$. By further removing strings, we may assume that for every~$s$, every $\s\in T_s$ of length~$f(n)$ has at least two extensions on~$T_s$ of length $f(n+1)$. We simply remove strings~$\s$ of length~$f(n)$ for which $\leb([T_s]\given \s)< 2^{-n}$.

\smallskip

Our plan now is to use the levels~$f(n)$ as locations for splittings. The price to pay, we shall see, is that the splittings are now no longer binary; a string on~$T_s$ of length~$f(n)$ may have more than two extensions on~$T_s$ of length $f(n+1)$. This will require some light modifications to the construction and the verification of convergence. For now, we define the new version of narrow subtrees; these will have an extra parameter, for which we will use the length of the associating strategy. Let $\seq{A_k}_{k<\w}$ be a computable partition of~$\w$ into infinite sets. For $k<\w$ and a tree $R\subseteq 2^{<\w}$ we let 
\[
	\Nar_k(R)
\]
be the tree obtained from~$R$ by removing from~$R$, for each $n\in A_k$ such that $f(n)> |\stem(R)|$ and each $\s\in R$ of length $f(n)$, the \emph{rightmost} extension of~$\s$ on~$R$ of length $f(n+1)$.

We will use the following:

\begin{fact} \label{fact:Nar_k_subtree} \
\begin{enumerate}[label=(\alph*)]
	\item If $R\subseteq S$ then for all~$k$, $\Nar_k(R)\subseteq \Nar_k(S)$. 
	\item If $R = \bigcap_{s<\wock} R_s$ then $\Nar_k(R) = \bigcap_{s<\wock} \Nar_k(R_s)$. 
\end{enumerate}
\end{fact}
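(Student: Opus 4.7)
My plan is to unpack the definition of $\Nar_k$: for $\tau\in T$, $\tau$ belongs to $\Nar_k(T)$ precisely when, for every $n\in A_k$ with $f(n)>|\stem(T)|$ and $f(n+1)\le|\tau|$, the prefix $\tau\rest{f(n+1)}$ is \emph{not} the lexicographically rightmost length-$f(n+1)$ extension in $T$ of $\tau\rest{f(n)}$. Both parts will follow from this characterisation together with two simple structural observations: $R\subseteq S$ forces $\stem(S)\preceq\stem(R)$, and the $R$-extensions of any $\sigma$ at a given level form a subset of the corresponding $S$-extensions.

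For part (a), assume $R\subseteq S$ and $\tau\in\Nar_k(R)$. From the preprocessing in force throughout the construction -- every $\sigma$ of length $f(n)$ in any tree we encounter has at least two length-$f(n+1)$ extensions -- every such tree has $|\stem|<f(2)$, so the set of indices $n\in A_k$ with $f(n)>|\stem(\cdot)|$ is the same for $R$ and for $S$. If some $n$ witnessed $\tau\notin\Nar_k(S)$, then $\tau\rest{f(n+1)}$ would be the rightmost $S$-extension of $\tau\rest{f(n)}$ at level $f(n+1)$; since the $R$-extensions at that level form a subset of the $S$-extensions and $\tau\rest{f(n+1)}\in R$, this prefix would also be the rightmost $R$-extension, contradicting $\tau\in\Nar_k(R)$.

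For part (b), the inclusion $\Nar_k(R)\subseteq\bigcap_s\Nar_k(R_s)$ is immediate from (a) applied to each $R\subseteq R_s$. The reverse direction is where the work lies. Suppose $\tau\in\bigcap_s\Nar_k(R_s)$; then $\tau\in R_s$ for every $s$, so $\tau\in R$. Suppose for contradiction that $\tau\notin\Nar_k(R)$, witnessed by some $n$ for which $\tau\rest{f(n+1)}$ is the rightmost $R$-extension of $\tau\rest{f(n)}$. The main obstacle -- and the step that uses admissibility of $\wock$ -- is to produce a single stage $s^*$ at which $R_{s^*}$ already exhibits this rightmost property. There are only finitely many length-$f(n+1)$ extensions of $\tau\rest{f(n)}$ lexicographically above $\tau\rest{f(n+1)}$; each such extension is absent from $R=\bigcap_s R_s$ and hence leaves $R_s$ at some stage, and admissibility bounds these finitely many departure stages by a common $s^*$. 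At stage $s^*$, the chain of inequalities $|\stem(R_{s^*})|\le|\stem(R)|<f(n)$ ensures that $n$ is still processed for $R_{s^*}$, while by choice of $s^*$ every length-$f(n+1)$ string extending $\tau\rest{f(n)}$ and strictly lex-above $\tau\rest{f(n+1)}$ has already been expelled from $R_{s^*}$; thus $\tau\rest{f(n+1)}$ is the rightmost $R_{s^*}$-extension, giving $\tau\notin\Nar_k(R_{s^*})$, the desired contradiction.
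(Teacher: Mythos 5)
Your handling of the harder inclusion in part (b) is essentially the paper's own (very brief) proof, spelled out in detail: only finitely many length-$f(n+1)$ strings lie lexicographically above $\tau\rest{f(n+1)}$, each must leave the co-enumeration at some stage, admissibility bounds those stages, and then $|\stem(R_{s^*})|\le|\stem(R)|<f(n)$ keeps $n$ processed at stage $s^*$. That step is sound, since $R\subseteq R_{s^*}$ does give $\stem(R_{s^*})\preceq\stem(R)$.

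Your proof of part (a), however, rests on the claim that ``every such tree has $|\stem|<f(2)$,'' which you derive from the assertion that every $\s$ of length $f(n)$ in any tree encountered has at least two length-$f(n+1)$ extensions. Both are false. The paper's preprocessing guarantees two extensions at every $f$-level only for the \emph{base} trees $T_s$; for the derived trees $T_{\alpha,s}$, as soon as $\alpha$ contains an outcome $\w k+i$ the tree $T_{\alpha,s}$ is a full subtree issuing from $\s_i(\tau_k)$, so its stem extends $\s_i(\tau_k)$ and has length $\ge f(2)$ (indeed $\ge f(3)$, since the construction only selects $\tau_k$ of length $f(n)>|\stem(T_{\alpha,s})|\ge 0$, forcing $|\tau_k|\ge f(2)$ and $|\s_i(\tau_k)|\ge f(3)$). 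Along that stem there are $f$-levels with a \emph{unique} continuation to the next $f$-level.

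This gap is not cosmetic: it is exactly the obstruction to monotonicity. For $R\subseteq S$ one has $|\stem(R)|\ge|\stem(S)|$ (your inequality), and if some $n\in A_k$ has $f(n)$ strictly between $|\stem(S)|$ and $|\stem(R)|$, that level is processed in $S$ but skipped in $R$, so a string removed from $\Nar_k(S)$ can survive in $\Nar_k(R)$. (A toy illustration with $f(1)=0$, $f(2)=2$, $f(3)=5$, $2\in A_k$: take $S=2^{<\w}$ and $R$ the full subtree of $S$ above $00$; then $00111\in\Nar_k(R)=R$ but $00111\notin\Nar_k(S)$.) What the argument really needs — and what your bound $<f(2)$ was a failed attempt to capture — is the $\alpha$-uniform statement that for a fixed strategy $\alpha$, the stems $\stem(T_{\alpha,s})$ over all stages $s$ with $T_{\alpha,s}$ infinite lie in a single interval $[f(m),f(m+1))$, so that the set $\{n\in A_{|\alpha|}:f(n)>|\stem(T_{\alpha,s})|\}$ is independent of $s$. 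That is a genuine claim about the moving tree of trees and requires its own inductive proof; as written your argument does not supply it.
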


For (b), the point is that every $\tau$ of length~$f(n)$ has only finitely many extensions of length $f(n+1)$, so the rightmost such extension eventually stabilises. 

\subsubsection{The moving tree of trees}

Let $\seq{\tau_k}_{k<\w}$ be a computable enumeration of all finite binary strings which have length $f(n)$ for some~$n$. Ensure that $k\le m$ implies $|\tau_k|\le |\tau_m|$. For each~$n$, for each string~$\tau$ of length $f(n)$, let $\s_0(\tau), \s_1(\tau), \dots, $ be an enumeration of all extensions of~$\tau$ of length $f(n+1)$, enumerated \emph{from right to left}. 

Our collection of strategies is a sub-collection of $(\w^2+1)^{<\w}$. The outcomes of a strategy~$\alpha$ are:
\begin{itemize}
	\item For each $k$ and each~$i$ such that $\s_i(\tau_k)$ is defined, the outcome $\w k+i$; 
	\item The outcome $\w^2$. 
\end{itemize}
Note that $\w k+i$ is an outcome exactly for $i<2^{f(n+1)-f(n)}$ (where $|\tau_k| = f(n)$). Thus the order-type of all outcomes is $\w+1$. For clarity, we choose not to renumber them. 

In what follows, let $T_{\wock} = T = \bigcap_{s<\wock} T_s$. For a strategy~$\alpha$ and a stage $s\le\wock$ we define a tree $T_{\alpha,s}$. We do this by recursion on the length of~$\alpha$.
\begin{itemize}
	\item $T_{\epsilon,s} = T_s$. 
	\item Given $T_{\alpha,s}$, we let $T_{\alpha\cat (\w k+i),s} = T_{\alpha,s}\rest{\s_i(\tau_k)}$ (the full subtree issuing from the string $\s_i(\tau_k)$);
	\item and we let $T_{\alpha\cat\w^2,s} = \Nar_{|\alpha|} (T_{\alpha,s})$.
\end{itemize}

Note that it is possible for $T_{\alpha,s}$ to be finite; for example if $\alpha\succeq \beta\cat(\w k+i)$ where $\s_i(\tau_k)\notin T_{\beta,s}$. We will make sure that such a strategy~$\alpha$ will never be accessible at stage~$s$. 

The following is not difficult to verify.

\begin{lemma} \label{lem:moving_tree_of_trees:basic_facts} 
Let $\alpha$ be a strategy and let $s\le \wock$. 
	\begin{enumerate}[label = (\alph*)]
		\item For all $t\ge s$, $T_{\alpha,s}\supseteq T_{\alpha,t}$. 
		\item $T_{\alpha,\wock} = \bigcap_{t<\wock} T_{\alpha,t}$. 
		\item For all $\beta \succeq \alpha$, $T_{\alpha,s}\supseteq T_{\beta,s}$. 
		\item Suppose that $T_{\alpha,s}$ is infinite. Let $n\ge 1$ and let $\s\in T_{\alpha,s}$ have length $f(n)$. 
		\begin{enumerate}[label=(\roman*)]
			\item $\s$ has an extension on~$T_{\alpha,s}$ of length $f(n+1)$. 
			\item If $f(n)>|\stem(T_{\alpha,s})|$ and $n\notin \bigcup_{k<|\alpha|}A_k$ then~$\s$ has at least two extensions on $T_{\alpha,s}$ of length $f(n+1)$. 
		\end{enumerate}
		\item If $T_{\alpha,s}$ is infinite, then so is $\Nar_{|\alpha|}(T_{\alpha,s})$. 
	\end{enumerate}
\end{lemma}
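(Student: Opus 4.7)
The plan is to prove all five clauses by structural induction on $|\alpha|$, leveraging Fact~\ref{fact:Nar_k_subtree} for the narrowing operation together with the elementary fact that the full-subtree operation $R \mapsto R\rest{\s_i(\tau_k)}$ is monotone in $R$ and commutes with decreasing intersections. For (a), the base case $T_{\epsilon,s} = T_s \supseteq T_t = T_{\epsilon,t}$ is immediate from the co-enumeration of $T$, and either form of the inductive step preserves the inclusion by monotonicity. Clause (b) reduces to the same base case together with the observation that both operations commute with decreasing intersections (the narrowing part being Fact~\ref{fact:Nar_k_subtree}(b)). Clause (c) is an immediate induction on $|\beta|-|\alpha|$, since each additional symbol appended to a strategy only replaces the current tree by a subtree of itself.

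For (d) and (e) I would run a joint induction on $|\alpha|$. At the base $\alpha = \epsilon$, we arranged $T_s$ so that every $\s \in T_s$ of length $f(n)$ has at least two extensions on $T_s$ of length $f(n+1)$, which gives (d)(i) and (d)(ii) (the indexing condition in (ii) being vacuous), and (e) follows because $\Nar_0$ removes at most one extension at each relevant level, still leaving at least one. For the inductive step, if $\alpha = \beta \cat (\w k + i)$ then $T_{\alpha,s}$ is just the full subtree of $T_{\beta,s}$ issuing from $\s_i(\tau_k)$, so every sufficiently long string has exactly the same extensions in $T_{\alpha,s}$ as it does in $T_{\beta,s}$, and the inductive hypothesis transfers directly; the union $\bigcup_{k' < |\alpha|} A_{k'}$ only grows, so the hypothesis of (d)(ii) becomes stricter, not weaker. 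If $\alpha = \beta \cat \w^2$, then $T_{\alpha,s} = \Nar_{|\beta|}(T_{\beta,s})$, which removes the rightmost extension at level $f(n+1)$ of each length-$f(n)$ node precisely when $n \in A_{|\beta|}$ and $f(n) > |\stem(T_{\beta,s})|$. Disjointness of the sets $A_k$ gives $n \in A_{|\beta|} \Rightarrow n \notin \bigcup_{k < |\beta|} A_k$, so by the inductive hypothesis (d)(ii) such nodes had at least two extensions in $T_{\beta,s}$, and at least one survives; this yields (d)(i) for $T_{\alpha,s}$ and hence (e) for $\beta$. For (d)(ii) of $T_{\alpha,s}$ with $n \notin \bigcup_{k \le |\beta|} A_k$, no extension is removed at level $f(n+1)$ from the level-$f(n)$ nodes, so the at-least-two property is inherited from $T_{\beta,s}$.

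The main obstacle I expect is a bookkeeping subtlety around the stem in the $\Nar$ case: applying $\Nar_{|\beta|}$ can lengthen the stem, so the side condition $f(n) > |\stem(T_{\alpha,s})|$ in (d)(ii) for $T_{\alpha,s}$ differs from its analogue $f(n) > |\stem(T_{\beta,s})|$ used when invoking the inductive hypothesis on $T_{\beta,s}$. The resolution is that $|\stem(T_{\alpha,s})| \ge |\stem(T_{\beta,s})|$, so the new side condition is strictly stronger, and the inductive hypothesis applied to $T_{\beta,s}$ continues to supply the two extensions whenever we need them.
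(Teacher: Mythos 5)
Your induction is correct and is essentially the only natural way to verify this lemma, which the paper leaves as an exercise (``the following is not difficult to verify'') with no written proof, so there is no competing argument to compare against. The only small imprecision is in your final paragraph: $\Nar_{|\beta|}$ never lengthens the stem, because removals happen only at levels $f(n+1) = f(n)+n+1 > |\stem(T_{\beta,s})|+1$, so all immediate successors of the stem survive and $\stem(\Nar_{|\beta|}(T_{\beta,s})) = \stem(T_{\beta,s})$; the genuine stem-lengthening occurs in the full-subtree case $\alpha = \beta\cat(\w k+i)$, where $\stem(T_{\alpha,s}) \succeq \s_i(\tau_k) \succ \stem(T_{\beta,s})$. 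Your resolution via $|\stem(T_{\alpha,s})| \ge |\stem(T_{\beta,s})|$ covers both cases anyway, so the proof stands.
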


Also, for a strategy $\alpha$ we define $X_\alpha$. We declare that $|X_\alpha|= |\alpha|$; $X_\alpha(n) = 0$ if $\alpha(n)<\w^2$, otherwise $X_\alpha(n) = 1$.

\subsubsection{Bad random oracle}

We now use our settings to give the desired proof.

\begin{proof}[Proof of \cref{th:badmlrandomoracle}]
Let $\+P$ be the higher oracle effectively closed set given by \cref{prop:avoiding_consistently_computable_reals}: for every~$A$, $\+P^A$ is nonempty and contains no set~$X$ higher reducible to~$A$ by a consistent functional. So our aim is to find some $A\in [T]$  and some $X\le_{\wock\Tur} A$ such that $X\in \+P^A$. Let $\seq{\+P_s}$ be a co-enumeration of $\+P$; we let $\+P_{\wock} = \+P$. 

\medskip

Let $s\le \wock$. At stage~$s$ we define the path~$\xi_s$ of strategies accessible at stage~$s$. Of course $\epsilon\prec \xi_s$. Suppose that we have already determined that $\alpha\prec \xi_s$. Let 
\[
	B_{\alpha,s} = \left\{ \s\in 2^{<\w} \,:\,  (\exists \tau\succeq \s)\,\,\+P^\tau_s\cap [X_\alpha\cat 0] = \emptyset   \right\}. 
\]
We choose as follows. 
\begin{enumerate}
	\item If $\Nar_{|\alpha|}(T_{\alpha,s})\subseteq B_{\alpha,s}$ then we let $\alpha\cat \w^2\prec \xi_s$. 
	\item Otherwise, let~$k$ be least such that:
	\begin{enumerate}[label=(\roman*)]
		\item $\tau_k\in \Nar_{|\alpha|}(T_{\alpha,s})\setminus B_{\alpha,s}$;
		\item $|\tau_k| = f(n) > |\stem(T_{\alpha,s})|$ and $n\in A_{|\alpha|}$. 
	\end{enumerate}
	Let $i$ be such that $\s_i(\tau_k)$ is the rightmost extension of $\tau_k$ on $T_{\alpha,s}$ of length $f(n+1)$. We let $\alpha\cat (\w k+i)\prec \xi_s$. 
\end{enumerate}

This concludes the construction. Note that we have defined $\xi_s$ for $s=\wock$ as well (starting with $T_{\wock} = T$, and using $\+P_{\wock} = \+P$). This is of course not part of the $\wock$-computable construction; only the sequence $\seq{\xi_s}_{s<\wock}$ is $\wock$-computable. We will show that $\seq{\xi_s}$ converges to $\xi_{\wock}$. We now start the verification.

\subsubsection{Trees are infinite}
For all $s\le \wock$ and $\alpha\prec \xi_s$, $T_{\alpha,s}$ is infinite: this is because when we choose $\alpha\cat (\w k+i)\prec \xi_s$ then $\s_i(\tau_k)\in T_{\alpha,s}$.  This shows that the instruction in case~(2) can be carried out. Note that if $\alpha\cat (\w k+i)\prec \xi_s$ then $\s_i(\tau_k)\in T_{\alpha,s}\setminus \Nar_{|\alpha|}(T_{\alpha,s})$.







\subsubsection{Convergence} We show that for all $s\le t \le \wock$, $\xi_s \le \xi_t$. Suppose that $\alpha\prec \xi_s$ and $\alpha\prec \xi_t$. 

First suppose that $\alpha\cat \w^2\prec \xi_s$. Since $B_{\alpha,s}\subseteq B_{\alpha,t}$ and $\Nar_{|\alpha|}(T_{\alpha,s})\supseteq \Nar_{|\alpha|}(T_{\alpha,t})$, we get $\alpha\cat \w^2\prec \xi_t$ as well. 

Next suppose that $\alpha\cat (\w k+i)\prec \xi_t$. The inclusions in the previous case show that $\tau_k\in \Nar_{|\alpha|}(T_{\alpha,s})\setminus B_{\alpha,s}$ as well; and $|\tau_k|> |\stem(T_{\alpha,t})| \ge |\stem(T_{\alpha,s})|$ (again because $T_{\alpha,s}\supseteq T_{\alpha,t}$). Also, $\s_i(\tau_k)\in T_{\alpha,s}$. Hence $\alpha\cat (\w k'+i')\prec \xi_s$ for some $k'\le k$, and if $k'=k$, for $i'\le i$; this means $\w k'+i'\le \w k+i$. 

\smallskip

It follows that $\seq{\xi_s}_{s<\wock}$ converges to some $\xi \le\xi_{\wock}$. However, $\xi = \xi_{\wock}$. To see this suppose that $\alpha\prec \xi_{\wock}, \xi$. If $\alpha\cat \w^2\prec \xi_{\wock}$, that is, if $\Nar_{|\alpha|}(T_{\alpha,\wock})\subseteq B_{\alpha,\wock}$, then as $B_{\alpha,\wock} = \bigcup_{s<\wock} B_{\alpha,s}$ and $\Nar_{|\alpha|}(T_{\alpha,\wock}) = \bigcap_{s<\wock} \Nar_{|\alpha|}(T_{\alpha,s})$, by admissibility of~$\wock$, there is some~$s$ such that $\Nar_{|\alpha|}(T_{\alpha,s})\subseteq B_{\alpha,s}$. Hence $\alpha\cat \w^2\prec \xi$. 

Suppose that $\alpha\cat (\w k+i)\prec \xi$. So for all $s<\wock$, $\tau_k \in \Nar_{|\alpha|}(T_{\alpha,s})\setminus B_{\alpha,s}$; so $\tau_k\in \Nar_{|\alpha|}(T_{\alpha,\wock})\setminus B_{\alpha,\wock}$. Also, for all~$s$, $|\tau_k| > |\stem(T_{\alpha,s})|$ and as $\stem(T_{\alpha,\wock}) = \lim_{s\to \wock} \stem(T_{\alpha,s})$, we get $|\tau_k|> |\stem(T_{\alpha,\wock})|$. A similar argument shows that $\s_i(\tau_k)$ is the rightmost extension of $\tau_k$ on~$T_{\alpha,\wock}$ at the next level $f(n+1)$. Thus the outcome of~$\alpha$ in~$\xi_{\wock}$ is at most $\w k +i$; but we know that $\xi\le \xi_{\wock}$, so we must have $\alpha\cat (\w k+i)\prec \xi_{\wock}$. 

It follows that $X_{\wock} = \lim_{s\to \wock} X_s$, where $X_s = \bigcup_{\alpha\prec \xi_s} X_\alpha$ for all $s\le \wock$. Let $A\in \bigcap_{\alpha\prec \xi_\wock} [T_{\alpha,\wock}]$. 

\subsubsection{Hitting the closed set} 
Let $\xi = \xi_{\wock}$ and write $T_\alpha$ for $T_{\alpha,\wock}$. For all $\alpha\prec \xi$, for all $B\in [T_{\alpha}]$, $\+P^B\cap [X_\alpha]\ne \emptyset$. The argument is as in previous constructions. By induction; if $\alpha\cat (\w k+i)\prec \xi$ then $X_{\alpha\cat (\w k+i)} = X_{\alpha}\cat 0$, and as $\tau_k\notin B_{\alpha,\wock}$, there is no $\s$ extending~$\tau_k$, thus no $\s\in T_{\alpha\cat (\w k+i)}$ such that $\+P^\s\cap [X_\alpha\cat 0] = \emptyset$. If $\alpha\cat \w^2\prec \xi$ then $\Nar_{|\alpha|}(T_\alpha)\subseteq B_{\alpha,\wock}$, so every $\tau\in \Nar_{|\alpha|}(T_\alpha) = T_{\alpha\cat \w^2}$  can be extended to some $B$ such that $\+P^B\cap [X_\alpha\cat 0]= \emptyset$; by induction, for all $\tau\in T_{\alpha\cat \w^2}$, $\+P^\tau\cap [X_\alpha\cat 1] \ne \emptyset$. We conclude that $X = X_{\wock}\in \+P^A$. 

\subsubsection{Computing~$\xi$} 
As in previous constructions, we show that $\xi \le_{\wock\Tur} A$ (and it is immediate that $X\le_{\wock\Tur} \xi$). As above, this amounts to showing that if $o<p\le \w^2$, $\alpha\cat o\prec \xi_s$ and $\alpha\cat p\prec \xi$, then $\stem(T_{\alpha\cat o,s})\nprec A$. Let $k,i$ such that $o = \w k +i$. We know that $\stem(T_{\alpha\cat o,s})\succeq \s_i(\tau_k)$, so we show that $\s_i(\tau_k)\nprec A$.

\smallskip

If $p = \w^2$ then we chose $\s_i(\tau_k)\notin T_{\alpha\cat \w^2,s}$; and $A\in [T_{\alpha\cat \w^2}]$ and $T_{\alpha\cat \w^2}\subseteq T_{\alpha\cat \w^2,s}$. 

\smallskip

Suppose that $p = \w m +j$. We show that $\s_j(\tau_m)\perp \s_i(\tau_k)$. We cannot have $\tau_m\prec \tau_k$ because $k\le m$ and so $|\tau_k|\le |\tau_m|$. We cannot have $\tau_m \succeq \s_i(\tau_k)$, because $\tau_m\in T_{\alpha\cat \w^2}$ and $\s_i(\tau_k)\notin T_{\alpha\cat \w^2,s}$. If $\tau_m\perp \s_i(\tau_k)$ we are done. If not, then $\tau_m \prec \s_i(\tau_k)$ and so $\tau_m = \tau_k$, that is, $m=k$. In this case we have $i\ne j$, and $\s_j(\tau_k)$ and $\s_i(\tau_k)$ are distinct strings of the same length, and so are incomparable. (Also, because $i<j$, $\s_i(\tau_k)\notin T_{\alpha}$, whence $\s_i(\tau_k)\nprec A$.)
\end{proof}

A modification of the proof above along the lines of {the} construction giving \cref{thm:existence_of_bad_for_uniform_higher_self_PA} also gives:

\begin{theorem} \label{thm:bad_random_for_self_PA}
	There is a higher $\ML$-random oracle which is bad for uniform self-$\PA$.  
\end{theorem}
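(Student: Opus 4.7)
The plan is to combine the two previous constructions: the framework of \cref{th:badmlrandomoracle}, which builds a higher ML-random bad oracle by running the treesh-bone construction inside the $\Sigma^1_1$ tree $T$ of ML-randoms (with uniform splitting levels $f(n)$ and the $\Nar_k$ operation), together with the interleaving trick of \cref{thm:existence_of_bad_for_uniform_higher_self_PA}, which diagonalises against all higher oracle effectively closed sets at once. Fix an effective list $\seq{\+P_e}$ of all higher oracle effectively closed sets and a computable pairing $(e,d)\mapsto \seq{e,d}$ with $\seq{e,d}<\seq{e,d+1}$. Strategies $\alpha$ will be drawn from the same sub-collection of $(\w^2+1)^{<\w}$ as in \cref{th:badmlrandomoracle}, and the tree assignment $\alpha\mapsto T_{\alpha,s}$ (built from $T_s$ using the full subtree operations $R\mapsto R\rest{\s_i(\tau_k)}$ and $R\mapsto \Nar_{|\alpha|}(R)$) is unchanged.

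For each $e<\w$ I would approximate a set $X_e\in \+P_e^A$, defining $X_{e,\alpha}$ by recursion on $|\alpha|$: $X_{e,\epsilon}=\epsilon$; if $|\alpha|=\seq{e,d}$ then $X_{e,\alpha\cat(\w k+i)}=X_{e,\alpha}\cat 0$ and $X_{e,\alpha\cat\w^2}=X_{e,\alpha}\cat 1$; and for $e'\ne e$, $X_{e',\alpha\cat o}=X_{e',\alpha}$ for every outcome $o$. At stage~$s$, to extend an accessible $\alpha\prec \xi_s$ with $|\alpha|=\seq{e,d}$, define
\[
  B_{e,\alpha,s}=\left\{\s\in 2^{<\w}\,:\,(\exists\tau\succeq \s)\,\,\+P_{e,s}^\tau\cap [X_{e,\alpha}\cat 0]=\emptyset\right\}
\]
and apply the choice rule from \cref{th:badmlrandomoracle}: if $\Nar_{|\alpha|}(T_{\alpha,s})\subseteq B_{e,\alpha,s}$, take outcome $\w^2$; otherwise pick the least $k$ with $\tau_k\in \Nar_{|\alpha|}(T_{\alpha,s})\setminus B_{e,\alpha,s}$, $|\tau_k|=f(n)>|\stem(T_{\alpha,s})|$ and $n\in A_{|\alpha|}$, and take the outcome $\w k+i$ where $\s_i(\tau_k)$ is the rightmost extension on $T_{\alpha,s}$.

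The verification follows the pattern of \cref{th:badmlrandomoracle} essentially line by line. Monotonicity of $B_{e,\alpha,s}$ in $s$ and of the tree operations (as recorded in \cref{fact:Nar_k_subtree} and \cref{lem:moving_tree_of_trees:basic_facts}) gives $\seq{\xi_s}_{s<\wock}\le\xi_{\wock}$ in the lexicographic order, and admissibility of $\wock$ upgrades this to convergence $\xi_s\to \xi=\xi_{\wock}$. Trees along $\xi_s$ remain infinite for the same reason as before, since each $\alpha\cat(\w k+i)\prec\xi_s$ is chosen only when $\s_i(\tau_k)\in T_{\alpha,s}$. By induction on the length of $\alpha\prec \xi$, if $\+P_e^B\ne\emptyset$ for every $B\in 2^\w$, then $[X_{e,\alpha}]\cap \+P_e^B\ne\emptyset$ for every $B\in[T_{\alpha,\wock}]$; closure of $\+P_e^A$ then gives $X_e\in \+P_e^A$ at the limit. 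Consistency of the functional computing $\xi$ from $A$ is verified exactly as in \cref{th:badmlrandomoracle}, using \cref{fact:small_tool1} together with the comparison between $\s_j(\tau_m)$ and $\s_i(\tau_k)$; this yields $X_e\le_{\wock\Tur}\xi\le_{\wock\Tur}A$ uniformly in $e$. Finally, $A\in[T]$ ensures that $A$ is higher ML-random.

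The main subtlety, which is where the proof could most easily go wrong, is handling strategies $\alpha$ with $|\alpha|=\seq{e,d}$ for an $e$ such that $\+P_e^B=2^\w$ for all $B$: such strategies must always take a finite outcome, so one must check that the forced choice of $k$ is consistent with the uniform-splitting constraints (the chosen $\tau_k$ has $|\tau_k|>|\stem(T_{\alpha,s})|$ and $n\in A_{|\alpha|}$). As in \cref{thm:existence_of_bad_for_uniform_higher_self_PA}, this forces a shrinking path through the tree of trees for each such $e$, so $A$ is in fact higher $\Delta^0_2$, matching the previous bad-oracle theorems.
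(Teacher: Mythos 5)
Your proposal is correct and follows exactly the route the paper intends: the paper's entire proof of \cref{thm:bad_random_for_self_PA} is the one-sentence remark that it is obtained by modifying the construction of \cref{th:badmlrandomoracle} along the lines of \cref{thm:existence_of_bad_for_uniform_higher_self_PA}, and you have carried out precisely that interleaving (one target $X_e$ per index $e$, the inductive claim being conditional on $\+P_e^B\ne\emptyset$ for all $B$, all run inside the moving tree of trees rooted at the $\Sigma^1_1$ tree of randoms). One small note on your identified ``main subtlety'': the forced choice of $k$ for indices $e$ with $\+P_e^B = 2^\w$ is automatically available --- since $B_{e,\alpha,s}=\emptyset$ and is downward closed while $\Nar_{|\alpha|}(T_{\alpha,s})$ is infinite, there are always admissible $\tau_k$ satisfying the level and $A_{|\alpha|}$ constraints by \cref{lem:moving_tree_of_trees:basic_facts}(d,e) --- so the check is routine rather than delicate, but it is indeed the point that yields the shrinking path and hence $\Delta^0_2$-ness of $A$.
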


\section{Relativising higher randomness} 
\label{ssec:relativising_higher_randomness}

Continuous relativisation of higher Martin-L\"of randomness were considered in \cite{BGMContinuousHigherRandomness} when studying bases for higher ML-randomness and the van Lambalgen theorem. As expected, a \emph{higher $A$-$\ML$-test} is a sequence $\seq{\+U_n}$ of uniformly higher $A$-c.e.\ open sets~$\+U_n$ with $\leb(\+U_n)\le 2^{-n}$; we analgously define higher $A$-ML null sets and higher $A$-ML random sequences. 

In this section we focus on some positive results: ways in which relativised higher randomness mimics standard randomness.

\subsection{ML and Kurtz randomness} 
\label{sub:ml_and_kurtz_randomness}

\begin{definition} \label{def:relative_Kurtz}
	Let $A\in 2^\w$. A sequence is \emph{higher $A$-Kurtz random} if it is an element of no null higher $A$-effectively closed set. 
\end{definition}

\begin{proposition} \label{prop:ML_implies_Kurtz}
	For any~$A$, every higher $A$-$\ML$ random sequence is higher $A$-Kurtz random.
\end{proposition}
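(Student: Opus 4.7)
The plan is to adapt the classical argument that null $\Pi^0_1$ classes are covered by Martin-L\"of tests, with admissibility of $\wock$ replacing the usual time tricks. Given a null higher $A$-co-c.e.\ closed set $\+C$, I would write its complement as the higher $A$-c.e.\ open set $\+W^A = [V^A]^\prec$ for a $\Pi^1_1$ oracle c.e.\ set $V \subseteq 2^{<\w}\times 2^{<\w}$, with $\wock$-computable approximations $\seq{V_s}_{s<\wock}$. Nullity of $\+C$ gives $\leb(\+W^A)=1$, whence $\leb([V^A \cap 2^{\le m}]^\prec) \nearrow 1$ as $m\to\infty$; so for each $n$ there is a least level $m_n^A$ such that the clopen cover of $\+C$ at that level -- the set of length-$m_n^A$ strings having no prefix in $V^A$ -- has measure at most $2^{-n}$.

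To define such covers uniformly in $n$ as a higher $A$-c.e.\ open set, I would take
\[
U_n = \left\{ (\tau,\sigma) \,:\, \exists s<\wock,\, \leb([V_s^\tau \cap 2^{\le|\sigma|}]^\prec) \ge 1-2^{-n} \text{ and } \sigma \text{ has no prefix in } V_s^\tau \right\},
\]
which is $\Pi^1_1$, and set $\+U_n^A := [U_n^A]^\prec$. Admissibility of $\wock$ guarantees that the required witnesses exist below $\wock$. Then I would verify: (a)~$\+C \subseteq \+U_n^A$, by choosing $\tau\prec A$ long enough that the coverage condition is met and setting $\sigma = X\rest m_n^A$; and (b)~$\leb(\+U_n^A) \le 2^{-n}$, yielding the desired higher $A$-$\ML$-test and hence the conclusion that any higher $A$-$\ML$-random $X$ lies outside $\+C$.

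The main obstacle is part~(b). For a fixed $\tau$, contributions at various stages $s$ and lengths $|\sigma|$ nest correctly: as $s$ grows, $V_s^\tau$ grows and the set of uncovered strings shrinks, and an uncovered string at a finer level refines an uncovered string at the smallest qualifying level $m^*_\tau$ for~$\tau$. Hence for each $\tau$, all contributions collapse into a single clopen cover of measure at most~$2^{-n}$. As $\tau$ grows along~$A$ the level $m^*_\tau$ is non-increasing; contributions from different $\tau\prec A$ can be at different levels and need not be directly comparable, but the monotonicity $V^{\tau_0}\subseteq V^{\tau_1}$ together with the choice of threshold $1-2^{-n}$ forces the union over $\tau\prec A$ to remain bounded by~$2^{-n}$. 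Establishing this nesting carefully -- splitting into the cases where $X$'s critical prefix lies in $V^A$ at a short level (absorbed by the late-$\tau$ cover) versus at a long level (contributing at most $M_{m_n^A}<2^{-n}$ in total) -- is the technical crux.
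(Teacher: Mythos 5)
Your high-level plan — cover the null higher $A$-co-c.e.\ closed set by a higher $A$-ML test — matches the paper's strategy, but the specific test you define does not satisfy the measure bound, and the difficulty you flag in step~(b) is a genuine obstruction, not a deferrable technicality. The contributions from different $(\tau,s)$ with $\tau\prec A$ do \emph{not} nest, even when $\+C$ is nonempty, and $\leb(\+U_n^A)$ can exceed $2^{-n}$.

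Concretely, take $n=1$ and arrange $V$ so that for some $\tau\prec\tau'\prec A$ and stages $s<s'$ we have $V^\tau_s=\{01,10\}$ and $V^{\tau'}_{s'}=\{01,10,1\}$, and so that later stages enumerate $\{0^k1 : k\ge 2\}$; then $[V^A]^\prec=2^\w\setminus\{0^\infty\}$ and $\+C=\{0^\infty\}$ is null and nonempty. At $(\tau,s)$ with $|\sigma|=2$: $\leb([\{01,10\}]^\prec)=\tfrac12\ge 1-2^{-1}$ and $00,11$ are uncovered, so $(\tau,00),(\tau,11)\in U_1$. At $(\tau',s')$ with $|\sigma|=1$: $\leb([\{1\}]^\prec)=\tfrac12$ and $0$ is uncovered, so $(\tau',0)\in U_1$. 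Hence $\+U_1^A\supseteq[0]\cup[11]$ has measure $\tfrac34>2^{-1}$. The structural reason is that coverage at a \emph{fixed} level $m$ is monotone in $(\tau,s)$, but the \emph{smallest qualifying level} is decreasing; passing from a longer qualifying level to a shorter one discards the very strings (here $01,10$) that made the earlier clopen cover large, so the sets $[V^\tau_s\cap 2^{\le m}]^\prec$ of measure $\ge 1-2^{-n}$ need not be comparable and their intersection can be much smaller than $1-2^{-n}$.

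This is exactly where the proposition must be non-uniform, and the paper's proof respects that. In light of \cref{th:no_A_universal_oracle_continuous_test} there is no single higher oracle construction that works for every $A$, so the paper cases on $A\ge_{\wock\Tur}O$: in the incomplete case it charges each covering act against a distinct $2^{-p(s)}$ (for $p\colon\wock\to\w$ an injective enumeration of $O$), so the budget is respected, and if the cover nonetheless misses part of $\+P^A$ that very failure shows $A\ge_{\wock\Tur}O$; in the complete case it uses $A$'s ability to recognize which $\s\prec A$ give $\leb(\+P^\s)<\epsilon$, which is precisely the global piece of information your local $(\tau,s)$ test cannot recover. Your construction tries to dispense with both the timing trick and the extra computational power at once, which is what cannot be done.
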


\begin{proof}
	We need to cover a higher $A$-effectively closed and null set by a higher $A$-ML null set. 
	We are given a higher oracle effectively closed~$\+P$ such that $\+P^A$ is null. Let $\epsilon>0$; we show how (uniformly in~$\epsilon$) we can find a higher $A$-c.e.\ open set $\+U^A$ such that $\leb(\+U^A)\le \epsilon$ and $\+P^A\subseteq \+U^A$. We mimic the proof that there is no higher self-PA degree, now throwing in measure into the mix.

	\smallskip
	
	First, we tackle the incomplete case, which is an elaboration on \cref{prop:no-self-PA:incomplete_case}. Again let $p\colon\wock\to \w$ be an injective enumeration of~$O$. At stage~$s$, for each~$\tau$, if $\leb(\+P_s^\tau)< \epsilon\cdot 2^{-p(s)}$, then we find a hyperarithmetic open set~$V\supseteq \+P_s^\tau$ with $\leb(V)\le \epsilon 2^{-p(s)}$ and add it to $\+U^\tau_{s+1}$. Thus, for all~$\tau$, 
	\[
		\leb(\+U^\tau) \le \epsilon \sum_{s<\wock} 2^{-p(s)} < \epsilon,
	\]
	so for all~$B$, $\leb(\+U^B)\le \epsilon$. Now suppose that $\+P^A\nsubseteq \+U^A$. This means that for all~$s$,~$m$, and $\tau\prec A$, if $\leb(\+P_s^\tau)< \epsilon 2^{-m}$ then $O_s\rest{m} = O\rest{m}$. It follows that $A\ge_{\wock\Tur}O$. 

	\smallskip
	
	In that case let 
	\[
		 M_\epsilon= \left\{ \s\in 2^{<\w} \,:\,  \leb(\+P^\s) <\epsilon \text{ and $\s$ is minimal such} \right\}. 
	\]
	Note that~$M_\epsilon$ is higher c.e., uniformly in~$\epsilon$, and so is higher $A$-computable, again uniformly. As in the proof of \cref{thm:no_self-PA}, when we see that~$\tau$ believes that some $\s\preceq \tau$ is in~$M_\epsilon$, we wait for the least~$s$ such that $\leb(\+P^\s_s)< \epsilon$, and if found, we find a hyperarithmetic open set $V\supseteq \+P^\s_s$ of measure $\le\epsilon$ and enumerate it into~$\+W^\tau$. Measure will go into $\+W^A$ on behalf of exactly one~$\s\prec A$ (the one in $M_\epsilon$) and one $s<\wock$, so $\leb(\+W^A) \le \epsilon$, and $\+P^A\subseteq \+P^\s \subseteq \+P^\s_s\subseteq \+W^A$. 
\end{proof}

\begin{remark} \label{rmk:covering_Kurtz_null_for_incompletes_by_uniform_test}
	The first part of the proof above can be extended to show that uniformly in~$\epsilon$ we can obtain a higher oracle-c.e.\ open set~$\+U$ such that for all~$B$, $\leb(\+U^B)\le \epsilon$; and for any~$A\nge_{\wock\Tur} O$, $\+U^A$ contains every higher $A$-Kurtz null set. 
\end{remark}

\subsection{Relatively c.e.\ sets are not random} 
\label{sub:relative_c_e_ sets_are_not_random}

It is not difficult to show that higher $A$-computable points cannot be higher $A$-random, indeed not higher $A$-Kurtz random. In fact, we show:

\begin{theorem} \label{thm:ce_sets_are_not_random}
	For all $A$, no higher $A$-c.e.\ set is higher $A$-Kurtz random. 
\end{theorem}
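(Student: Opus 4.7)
My plan is to split into two cases based on whether $X = W^A$ (for some $\Pi^1_1$ set $W\subseteq 2^{<\w}\times \w$ witnessing that $X$ is higher $A$-c.e.) is infinite or finite. This parallels the classical proof that no c.e.\ set is $1$-random, but I will adjust the finite case to avoid invoking ``finite c.e.\ implies computable'', whose classical proof uses a time trick that need not succeed here.

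\textbf{Infinite case.} I would take the higher $A$-effectively closed set $\+P^A = \{Y \in 2^\w : X \subseteq Y\}$, which contains $\chi_X$ and has measure $\prod_{n\in X} \tfrac12 = 0$. To see it is higher $A$-effectively closed, note that its complement is $\{Y : \exists n\in X,\ Y(n) = 0\}$, which equals $[V^A]^\prec$ for the $\Pi^1_1$ set of pairs
\[
V = \bigl\{(\tau,\sigma) \in 2^{<\w}\times 2^{<\w} \,:\, \exists\, n < |\sigma|,\ \sigma(n) = 0 \text{ and } (\exists \tau'\preceq \tau)\,(\tau',n)\in W\bigr\}.
\]
No time trick is invoked: $V$ is built directly from the $\Pi^1_1$ set $W$ by a finite disjunction over $n<|\sigma|$ and $\tau'\preceq \tau$.

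\textbf{Finite case.} If $|X|=k$, then $\chi_X$ lies in the set $\+C_k = \{Y \in 2^\w : |\{n : Y(n) = 1\}| \le k\}$. This is a computable closed set, since its complement is the computably open set $\bigcup_{n_0<\cdots<n_k} \bigcap_{i\le k}\{Y : Y(n_i)=1\}$. It is countable (parametrised by the at most $k$ positions carrying a $1$), hence null. Being computable, $\+C_k$ is higher $A$-effectively closed for every oracle $A$.

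I do not anticipate a genuine obstacle. The only subtle point is that, unlike in the classical setting, we cannot claim that a finite higher $A$-c.e.\ set is higher $A$-computable---the condition ``$n\notin W^A$'' is a priori $\Sigma^1_1(A)$, not obviously higher $A$-c.e.---so in the finite case one cannot simply cover $\chi_X$ by the singleton $\{\chi_X\}$. Using the oracle-independent $\+C_k$ sidesteps this, since the theorem requires only \emph{some} higher $A$-effectively closed null set containing $\chi_X$, and the case split provides one.
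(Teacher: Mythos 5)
Your proof is correct, and the main (infinite) case is exactly the paper's argument: cover $\chi_X$ by the set of supersets of $X$, which you spell out explicitly as a higher $A$-effectively closed null set via the $\Pi^1_1$ set $V$. That construction checks out.

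For the finite case, your caution is understandable but not actually necessary, and it leads you to do slightly more work than the paper does. If $X$ is finite, then $\chi_X$ has finitely many $1$'s, so it is a \emph{computable} element of $2^\w$; the singleton $\{\chi_X\}$ is therefore a computable (hence $\Delta^1_1$, hence higher effectively closed) null closed set, and any higher effectively closed set is automatically higher $A$-effectively closed for every $A$ (use the oracle-independent $\Pi^1_1$ set of pairs in which the oracle coordinate is unconstrained). Crucially, there is no uniformity requirement here: the definition of higher $A$-Kurtz randomness asks only that \emph{some} null higher $A$-effectively closed set contain $\chi_X$, and we are free to use the existence of the finite set $X$ as given data even if $A$ cannot recognize it. So the paper's ``certainly it is not any kind of random'' does not rest on the (indeed dubious in the higher setting) claim that a finite higher $A$-c.e.\ set is higher $A$-computable. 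Your $\+C_k$ alternative is also perfectly valid -- it is a clean way to sidestep the issue without even needing to name $\chi_X$ -- but the simpler singleton argument already works, and is what the paper has in mind.
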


Hirschfeldt first showed that higher $A$-c.e.\ sets cannot be higher $A$-ML random. His argument used Besicovitch density (the law of large numbers). 

\begin{proof}
	Let $X\subseteq \w$ be higher $A$-c.e. If~$X$ is finite then certainly it is not any kind of random. Suppose that~$X$ is infinite; then the collection of supersets of~$X$ is closed and null, and in fact is higher $A$-effectively closed. 
\end{proof}

\subsection{There are relative higher left-c.e.\ randoms} 
\label{sub:there_are_relative_higher_left_c_e_ randoms}

As discussed in the introduction, in the next section we will show that for some very bad oracle~$A$, there is no universal higher $A$-ML test. One of the implications is that it is not clear that there will be a (nonempty) higher $A$-effectively closed set containing only higher $A$-ML random sequences. As a result, the standard proof that for every~$A$, there is an $A$-left c.e., $A$-ML random sequence, does not work in the higher setting: the proof takes the leftmost point in an $A$-effectively closed set of $A$-ML random sequences. 

Nonetheless, we can prove that for all~$A$, there are indeed higher $A$-left c.e., higher $A$-ML random sequences. To prove this, we first need to clarify the notion of higher $A$-left c.e. Recall that the higher left-c.e.\ sequences were defined by the following equivalent conditions:
\begin{enumerate}
	\item $\left\{ \s\in 2^{<\w} \,:\,  \s<X \right\}$ is higher c.e.;
	\item $X$ has a lexicographically non-decreasing $\wock$-computable approximation. 
\end{enumerate}
It is not clear that this equivalence relativises to all oracles, indeed that the second condition is meaningful to relativse for all oracles: for example, if $A$ is tree-collapsing, then it seems that the correct notion of approximation would have length~$\w$, rather than~$\wock$. Thus we define:

\begin{definition} \label{def:relativised_left-c.e.}
	Let $A\in 2^\w$. A sequence $X$ is \emph{higher $A$-left-c.e.}\ if $\left\{ \s\in 2^{<\w} \,:\,  \s<X \right\}$  is higher $A$-c.e.
\end{definition}

Note that for all~$A$, a sequence~$X$ is higher $A$-left-c.e.\ if and only if there is a higher $A$-effectively closed set~$\+P$ whose leftmost element is~$X$. The standard proof applies.

\subsubsection{Working in the unit interval} 
The next proof will work better in the unit interval $[0,1]$ -- its connectedness will be convenient. The standard computable map from Cantor space onto the unit interval: $X\mapsto \sum X(n)2^{-(n+1)}$ -- is not bijective but omitting the binary rationals, gives an isomorphism of effective measure spaces. Call this map~$\Theta$. 

That is, we can define higher oracle effectively open sets $\+U\subseteq 2^\w\times [0,1]$ as those generated by $\Pi^1_1$ sets of pairs $(\s,I)$ where $I\subseteq [0,1]$ is a rational interval, open in $[0,1]$ (so we allow $[0,q)$ and $(q,1]$ for rational $q\in (0,1)$, as well as pairs $(p,q)$ for rational $p<q$ in $[0,1]$), where for such a set~$W$, the open set generated by~$W$ is $\bigcup \left\{[\s]\times I  \,:\, (\s,I)\in W  \right\}$. We can then take sections and for $A\in 2^\w$ talk about higher $A$-c.e.\ open subsets of the unit interval, and using the usual Lebesgue measure on the unit interval (which we also denote by~$\lambda$), define higher $A$-ML tests and thus randomness. The almost isomorphism $\Theta$ is computable, and this shows that for all $X\in 2^\w$ and $A$, $X$ is higher $A$-ML random if and only if $\Theta(X)$ is higher $A$-ML random. 

Similarly, we say that $r\in [0,1]$ is higher $A$-left-c.e.\ if $\left\{ q\in \Rat\cap [0,1] \,:\,  q<r \right\}$ is higher $A$-c.e. Then for all $X\in 2^\w$, $X$ is higher $A$-left-c.e.\ if and only if $\Theta(X)$ is higher $A$-left-c.e.

\subsubsection{Building a higher $A$-left-c.e.\ random} 

\begin{theorem} \label{thm:higher_left_c.e._random}
For every~$A$, there is a higher $A$-left-c.e., higher $A$-$\ML$-random sequence. Such a sequence can be found uniformly in~$A$. 
\end{theorem}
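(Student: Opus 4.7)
Working in the unit interval via $\Theta$, it suffices to build $r\in[0,1]$ which is higher $A$-left-c.e.\ and higher $A$-$\ML$-random; pulling back through $\Theta$ then yields the required sequence. Enumerate all higher oracle c.e.\ open $\+T\subseteq 2^\w\times[0,1]\times\w$ as $\seq{\+T_e}_{e<\w}$, writing $\+T_{e,n}\subseteq 2^\w\times[0,1]$ for the $n$-slice, and set $n_e=e+2$. Any higher $A$-$\ML$-test coincides with the slices of some $\+T_{\hat e}$, in which case $\lambda(\+T_{\hat e,n_{\hat e}}^A)\leq 2^{-\hat e-2}$. Requirement $R_e$ will try to keep $r\notin \+T_{e,n_e}^A$, with a rightward ``budget'' of $b_e=2^{-e-2}$.

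Build $r=\sup_{s<\wock}r_s$ from a non-decreasing $\wock$-computable-in-$A$ sequence of rationals with $r_0=0$. At stage $s+1$, pick the least $e$ (if any) such that $R_e$'s total push so far is strictly below $b_e$ and $r_s\in \+T_{e,n_e,s}^A$ (continuously read through $A$); set $r_{s+1}$ to the right endpoint of the connected component of $r_s$ in the current open set $\+T_{e,n_e,s}^A$, capped so that the total push for $R_e$ does not exceed $b_e$. If no such $e$ exists put $r_{s+1}=r_s$, and at limits take $r_s=\sup_{t<s}r_t$. The successive push-intervals $(r_s,r_{s+1})$ contributed by a fixed $R_e$ are pairwise disjoint subintervals of $\+T_{e,n_e}^A$, so the total push on $R_e$'s behalf is at most $b_e$; summing gives $r\leq\sum_{e<\w}2^{-e-2}=1/2<1$, so the approximation is well defined and lies in $[0,1/2]$.

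The set $\{q\in\Rat\cap[0,1]:q<r\}=\bigcup_s\{q:q\leq r_s\}$ is higher $A$-c.e., so $r$ is higher $A$-left-c.e. For randomness, fix a higher $A$-$\ML$-test $\seq{U_n^A}$ coded by $\+T_{\hat e}$; legitimacy gives $\lambda(\+T_{\hat e,n_{\hat e}}^A)\leq b_{\hat e}$, so the budget of $R_{\hat e}$ is never reached artificially. The main point requiring care is a double admissibility argument: first, each higher-priority $R_{e'}$ ($e'<\hat e$) acts at a $\wock$-computable set of stages whose total push is bounded by $b_{e'}$, so by admissibility of $\wock$ this set is bounded below $\wock$, yielding a stage $s^*<\wock$ after which no $R_{e'}$ acts; and second, were $r\in\+T_{\hat e,n_{\hat e}}^A$, openness together with admissibility would furnish a stage $s\geq s^*$ at which some basic interval around $r$ is already enumerated and $r_s$ lies inside it, forcing $R_{\hat e}$ to push $r_{s+1}$ past $r$ and contradicting $r=\sup r_s$. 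Hence $r\notin \+T_{\hat e,n_{\hat e}}^A\supseteq\bigcap_n U_n^A$, so $r$ is higher $A$-$\ML$-random. Finally, the entire construction is packaged as a single higher functional in $A$, so the output is uniform in $A$; pulling $r$ back through $\Theta$ produces the desired binary sequence.
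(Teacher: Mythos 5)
Your construction is the classical ``Solovay push'' argument, and it is fine in ordinary computability; but as stated it does not produce a higher $A$-left-c.e.\ real, and this is exactly the kind of time trick the paper is built around. Higher $A$-left-c.e.\ means that $\{q\in\QQ : q<r\}$ is an $A$-section $W^A$ of a single $\Pi^1_1$ set $W\subseteq \QQ\times 2^{<\w}$: each fact $q<r$ must be witnessed by a \emph{finite} prefix $\tau\prec A$, with $(q,\tau)\in W$ decided with no further reference to $A$. Your $r_{s+1}$, however, is computed from $r_s$ together with the running budgets, i.e.\ from the entire history of pushes, and that history depends on how much of $A$ was consulted at every earlier stage. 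What you actually get is a $\Pi^1_1(A)$ left cut (the full relativisation), not a continuous one; your closing sentence (``packaged as a single higher functional in~$A$'') is precisely the missing step. Worse, the natural repair --- define $r^\tau$ by running the construction using only the test-axioms below~$\tau$, then set $r^A=\sup_{\tau\prec A}r^\tau$ --- fails because the stateful construction is not monotone in the oracle: a longer prefix $\tau'$ can cause a high-priority $R_e$ to see a larger initial cover and burn its entire budget $b_e$ in one early push, leaving no room for the later, more economical pushes the shorter prefix $\tau$ would have made, so $r^\tau>r^{\tau'}$ is possible with $\tau\prec\tau'$. Then $\sup_{\tau\prec A}r^\tau$ no longer tracks your $\wock$-stage process and the verification collapses.

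The paper avoids this by giving a \emph{stateless} definition. For each finite $\sigma$, $r^\sigma$ is the supremum of rationals $q$ such that $[0,q)$ is covered by finitely many finite unions $C_1,\dots,C_n$ of rational intervals with each $C_k\subseteq\+W_k^\sigma$ and $\leb(C_k)\le 2^{-k}/2$. This is automatically monotone in~$\sigma$, the set of pairs $(q,\sigma)$ with $q<r^\sigma$ is $\Pi^1_1$, and $r^A=\sup_{\sigma\prec A}r^\sigma$ is genuinely higher $A$-left-c.e.\ via a single functional, uniformly. Randomness is then established not by budget exhaustion but by an index-juggling argument: given a test $\seq{\+V_n^A}$ capturing $r^A$, one finds $k<m$ with $\+W_k=\+V_m$, so the $k$-th slot has slack $\leb(\+W_k^A)\le 2^{-m}\le 2^{-k}/2$ and one can absorb a fresh interval $(p,q)$ past $r^A$ into~$C_k$ while preserving the measure bound, contradicting the definition of $r^\sigma$ for a suitable $\sigma\prec A$. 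The crucial feature is that both the definition of $r^\sigma$ and the contradiction are local to a single finite prefix of~$A$ --- no $\wock$-length $A$-dependent bookkeeping is involved.
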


\begin{proof}
As mentioned above, it suffices to show that for all~$A$, there is a higher $A$-left c.e., higher $A$-ML random real in $[0,1]$. As such a real~$r$ is never a binary rational, from~$r$ we can effectively find its binary {expansion} $X\in 2^\w$. 

Fix an effective and acceptable list $\+W_1,\+W_2,\dots$ of all oracle c.e.\ open subsets of $[0,1]$. Given a string~$\s$, we let $r^\s$ be the supremum of all rational $q\in [0,1]$ such that for some $n\ge 1$ and some sequence $C_1,C_2,\dots, C_n$, we have:
\begin{enumerate}[label=(\arabic*)]
	\item each $C_k$ is the union of finitely many rational open subintervals of $[0,1]$;
	\item $C_k\subseteq \+W_k^\s$ for all $k=1,\dots, n$; 
	\item $\leb(C_k)\le 2^{-k}/2$ for all $k=1,\dots, n$; and
	\item $[0,q)\subseteq \bigcup_{k=1}^n C_k$. 
\end{enumerate}
Note that $\leb(C_k)\le 2^{-k}/2$ (and $k\ge 1$) implies that $r^\s\le 1/2$ for all~$\s$. Now 
\[
	\left\{ (q,\s)\,:\, q\in \Rat\cap [0,1] \andd  q<r^\s \right\}
\]
is $\Pi^1_1$. Thus, for all~$A\in 2^\w$, 
\[
	r^A = \sup_{\s\prec A} r^\s
\]
is higher $A$-left-c.e., uniformly in~$A$; and $r^A\le 1/2$ (what is important is that $r^A<1$). We show that for all~$A$, $r^A$ is higher $A$-ML random. 

\medskip

Fix $A\in 2^\w$, and suppose that $r^A$ is not higher $A$-ML random. Let $\seq{\+V_n}$ be a uniform sequence of higher oracle $A$-c.e.\ open sets such that $\seq{\+V_n^A}$ is a higher $A$-ML test which captures $r^A$. 

\begin{smallclaim} \label{smallclaim:small_index}
	There are some $k<m$ such that $\+W_k = \+V_m$. 
\end{smallclaim}
	
\begin{proof}
	Let $\seq{\+B^0_e}_{e<\w}, \seq{\+B^1_e}_{e<\w},\dots$ be a list of all lists of uniform higher oracle-c.e.\ open sets. There is a computable function $f$ such that for all~$n$ and~$e$, $\+W_{f(n,e)} = \+B^n_e$. Let $g$ be a computable function such that $g(e)> f(e,n)$ for all $n\le e$. Fix~$n$ such that $\+B^n_e = \+V_{g(e)}$. Choose $m = g(e)$ for any~$e\ge n$, and $k = f(n,e)< g(e)$. 
\end{proof}

Fix some $k<m$ given by \cref{smallclaim:small_index}. Since $r^A\in \+V_m^A$, find some rational interval $(p,q)\subseteq \+V_m^A$ such that $p<r^A<q$. Fix some $\s\prec A$ such that $r^\s>p$. Find $C_1,C_2,\dots, C_n$ which witness that $r^\s>p$. Now $\+W_k^\s\subseteq \+W_k^A = \+V_m^A$ so $\leb(\+W_k^\s)\le 2^{-m} \le 2^{-k}/2$; so $\leb(C_k\cup (p,q))\le 2^{-k}/2$. Thus replacing $C_k$ by $C_k\cup (p,q)$ in the list $C_1,\dots, C_n$ witnesses that $r^\s\ge q$, which is impossible. 
\end{proof}

\section{Universal higher $A$-ML tests} \label{sec:bad_oracle_universal_mltest}

A higher $A$-ML test $\seq{\+V_n}$ is \emph{universal} if $\bigcap_n \+V_n$ contains every higher $A$-ML non-random sequence. An oracle~$A$ is \emph{bad for universal tests} if there is no universal higher $A$-ML test. Below we construct an oracle bad for universal tests. Before we do so, we exhibit some classes of good oracles.

\subsection{Uniform higher ML-tests} 
\label{sub:uniform_higher_ml_tests}

We remark that we have already constructed a weak version of a bad oracle. If $\seq{\+V_n}$ is a higher $A$-ML test then there is a uniform list $\seq{\+U_n}$ of higher oracle c.e.\ open sets such that $\+V_n = \+U_n^A$. For other oracles~$B$, the sequence $\seq{\+U_n^B}$ may fail to be a higher $B$-ML test, because for some~$n$ we may have $\leb(\+U_n^B)>2^{-n}$. We say that $\seq{\+U_n}$ is a (uniform) \emph{higher oracle $\ML$-test} if for all~$B$, $\seq{\+U_n^B}$ is a higher $B$-ML test. 

\Cref{th:no_A_universal_oracle_continuous_test} (together with the fact that higher $A$-computable sets are not higher $A$-ML random) implies that for some~$A$, for no higher uniform test $\seq{\+U_n}$ is $\seq{\+U_n^A}$ universal for higher $A$-ML randomness. That is, unlike the lower setting, there is no oracle ML-test which is universal for every oracle. This is strengthened by \cref{th:no_A_unniversal_mlr_test}.





\subsection{Good oracles for universal tests} 
\label{sub:good_oracles_for_universal_tests}

\begin{theorem} \label{thm:good_oracles_for_universal_tests}
	Suppose that one of the following holds for~$A$:
	\begin{enumerate}[label=\textup{(\alph*)}]
		\item $A\ge_{\wock\Tur} O$;
		\item $A$ is tree-collapsing; or
		\item $A$ is higher $\ML$-random. 
	\end{enumerate}
	Then there is a universal higher $A$-$\ML$ test. 
\end{theorem}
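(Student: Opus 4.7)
In each case the goal is to mimic the classical construction of a universal ML test: enumerate all ``candidate'' higher $A$-ML tests $\seq{\+W^{e,A}_n}_n$ indexed by $e<\w$ and set
\[
 \+U_n^A \;=\; \bigcup_e \tilde{\+W}^{e,A}_{n+e+1},
\]
where $\tilde{\+W}^{e,A}_n$ is a truncation of $\+W^{e,A}_n$ guaranteed to have measure at most $2^{-n}$. Classically the truncation is done by a time trick; each of our three hypotheses will give a substitute. Since each candidate is a uniformly higher $A$-c.e.\ open sequence, and since every higher $A$-ML test arises this way, the measure bound $\sum_e 2^{-(n+e+1)}=2^{-n}$ and universality follow as in the classical argument.

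For case (a), $A\ge_{\wock\Tur}O$: the condition $\leb(\+W^{e,A}_n) \le 2^{-n}$ is $\Pi^1_1(A)$, hence higher $A$-decidable. So we simply set $\tilde{\+W}^{e,A}_n = \+W^{e,A}_n$ if this holds and $\emptyset$ otherwise. Any genuine higher $A$-ML test is preserved unchanged by this truncation. For case (b), $A$ tree-collapsing: by \cref{prop:tree_collapsing_and_Pi11_antichains} applied uniformly in~$e$, every $\+W_e^A$ has a higher $A$-computable prefix-free generator $S_e^A$. We truncate by enumerating $S_e^A$ (together with its complement), summing $2^{-|\s|}$ as elements appear, and declaring $\s$ admitted only while the running sum stays below $2^{-n}$. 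This produces a higher $A$-c.e.\ set $\tilde{S}_{e,n}^A$ of measure $\le 2^{-n}$ whose generated open set is unchanged if $\leb(\+W_e^A)\le 2^{-n}$ to begin with.

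Case (c), $A$ higher $\ML$-random, requires a different idea, using van Lambalgen. Let $\seq{\+U_n}$ be the universal higher ML test on $2^\w\times 2^\w$ (this exists by the usual argument, since the ambient space has no relativisation issues), and set $\+V_n^A = \{X : A\oplus X \in \+U_n\}$, which is uniformly higher $A$-c.e.\ open. By Fubini, $\int \leb(\+V_k^A)\,d\leb(A) \le 2^{-k}$, so by Markov
\[
 \leb\bigl\{A : \leb(\+V_{2n+m+1}^A) > 2^{-n}\bigr\} \;\le\; 2^{-(n+m+1)}.
\]
Each such set is higher c.e.\ open (the inequality on the measure is a $\Sigma^1_1$ event). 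Summing over $n$, the sets
\[
 \+D_m \;:=\; \bigcup_n \bigl\{A : \leb(\+V_{2n+m+1}^A) > 2^{-n}\bigr\}
\]
form a higher ML-test. Since $A$ is higher ML-random, fix $m_0$ with $A\notin \+D_{m_0}$; then $\+U_n^A := \+V^A_{2n+m_0+1}$ is a valid higher $A$-ML test. By the higher van Lambalgen theorem proved in \cite{BGMContinuousHigherRandomness}, $\bigcap_n \+V_n^A$ is precisely the set of higher $A$-$\ML$-nonrandoms, and this coincides with $\bigcap_n \+U_n^A$ since the $\+V_n^A$ are decreasing, establishing universality.

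The main technical obstacle is case (b): one must verify that the prefix-free generator from \cref{prop:tree_collapsing_and_Pi11_antichains} can be produced uniformly in an index for $\+W_e$ and that the stagewise truncation, carried out along the $\wock$-computable enumeration of the witness tree, is itself higher $A$-c.e. For case (c), the only subtlety is confirming that the continuous-relativisation form of van Lambalgen from \cite{BGMContinuousHigherRandomness} applies here; the Markov/Fubini bookkeeping is routine once the right indexing ($2n+m+1$, chosen so that the double sum over $n$ and $m$ converges) is in place.
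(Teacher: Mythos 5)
Your three cases take noticeably different routes from the paper's, so let me address each.

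\textbf{Case (a).} The underlying idea matches the paper's, but the statement ``$\leb(\+W^{e,A}_n)\le 2^{-n}$ is $\Pi^1_1(A)$, hence higher $A$-decidable'' conflates the two notions of relativisation that the paper is at pains to distinguish: $\Pi^1_1(A)$ versus $A$-sections of $\Pi^1_1$ sets. Deciding a global property of $A$ by an all-or-nothing rule is not a continuous operation. What one actually does (and what the paper does) is work with the set $M_{e,\epsilon} = \{\s : \leb(\+W_e^\s)\le\epsilon\}$, a subset of $2^{<\w}$, which being $\Sigma^1_1$ is $\le_{\Tur}O\le_{\wock\Tur}A$. One then enumerates $\+W_e^\s$ into $\+U_{e,\epsilon}^\tau$ for those $\s\prec\tau$ that $\tau$ (via its copy of $O$) believes to be in $M_{e,\epsilon}$. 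This is continuous in $\tau$, the measure bound holds because each piece has measure $\le\epsilon$, and if $\leb(\+W_e^A)\le\epsilon$ then every $\s\prec A$ is in $M_{e,\epsilon}$, so the whole thing is recovered. The fix is minor, but the continuity point is exactly what makes the higher setting non-trivial, so it should not be glossed over.

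\textbf{Case (b).} You reroute through \cref{prop:tree_collapsing_and_Pi11_antichains} to get a higher $A$-computable prefix-free generator and then truncate. This works (the construction in \cref{prop:tree_collapsing_and_Pi11_antichains} is uniform in the oracle c.e. index), though it adds a layer of indirection. The paper instead works directly with the witness tree $T$: whenever $\tau\in T_{s+1}\setminus T_s$ and $\leb(\+W_{e,s}^\tau)\le\epsilon$, dump $\+W_{e,s}^\tau$ into $\+U_{e,\epsilon}^\tau$. The witness tree supplies precisely the ``time trick'' substitute needed: $A$ enters the tree at unbounded stages, so $\+W_e^A$ is recovered in the limit, and the measure guard caps every enumeration at $\epsilon$. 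What your version buys is modularity (delegating the time-trick substitute to the earlier proposition); what the paper's buys is transparency. Both are fine.

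\textbf{Case (c).} This is where you diverge most, and where there is a genuine worry. The Markov/Fubini bookkeeping and the higher-c.e.-openness of $\{A : \leb(\+V_k^A)>2^{-n}\}$ are all correct, and once one grants the van Lambalgen identity $\bigcap_n\+V_n^A = \{X : X\text{ is not higher }A\text{-ML-random}\}$ the argument closes. But the direction of van Lambalgen you invoke is precisely: for higher ML-random $A$, $X$ not higher $A$-ML-random implies $A\oplus X$ not higher ML-random. In the continuous-relativisation setting this is the delicate direction: lifting an $A$-ML-test $\seq{\+W_n^A}$ to a joint test requires controlling $\leb(\+W_n^{A'})$ over \emph{all} oracles $A'$, which is exactly the truncation the paper's proof of case (c) carries out. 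So your appeal to van Lambalgen is in danger of being circular unless the cited proof of that direction is independent of the existence of universal $A$-ML-tests for random $A$. The paper instead argues directly: for each $e,\epsilon,n$ it builds $\+U_{e,\epsilon,n}$ (via $\Delta^1_1$ outer approximations $V_s$ to the closed sets $Q_s = \{B : \leb(\+W_{e,s}^B)\le\epsilon\}$, with error $2^{-n}2^{-p(s)}$) so that the exceptional set $\{B : \leb(\+U_{e,\epsilon,n}^B)>\epsilon\}$ is higher c.e.\ open of measure $\le 2^{-n}$; then the higher ML-randomness of $A$ supplies an effective bound on $n$. This is self-contained and avoids any reliance on van Lambalgen. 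Your argument may well be correct if \cite{BGMContinuousHigherRandomness}'s van Lambalgen is established independently, but you should check that and say so explicitly; as written, the dependency structure is not clear.
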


\begin{proof}
We consider each class in turn. In all three cases, the key for constructing a universal test for~$A$ is the ability to uniformly in~$e$ and~$\epsilon>0$ produce a higher $A$-c.e.\ open set $\+U^A_{e,\epsilon}$ such that $\leb(\+U_{e,\epsilon}^A)\le \epsilon$ and if $\leb(\+W_e^A)\le \epsilon$ then $\+U^A_{e,\epsilon} = \+W_e^A$. Here of course $\seq{\+W_e}$ is an efective listing of all higher oracle c.e.\ open sets.

\subsubsection{When $A\ge_{\wock\Tur} O$} 

We elaborate on the previous proofs using higher complete oracles (\cref{thm:no_self-PA,prop:ML_implies_Kurtz}). Fix~$e$ and~$\epsilon$. We let 
\[
	M_{e,\epsilon} = \left\{ \s\in 2^{<\w} \,:\, \leb(\+W_e^\s)\le \epsilon  \right\}. 
\]
Then informally we let, for all $\tau\in 2^{<\w}$, 
\[
	\+U_{e,\epsilon}^\tau = \bigcup \left\{ \+W_e^\s \,:\,  \s\prec \tau \andd \tau\text{ says that }\s\in M_{e,\epsilon}  \right\}. 
\]

\subsubsection{When $A$ is tree-collapsing} 

Let~$T$ witness that~$A$ is tree-collapsing. At stage~$s$, if $\tau\in T_{s+1}\setminus T_s$, and further, $\leb(\+W_{e,s}^\tau)\le \epsilon$, we enumerate all of $\+W_{e,s}^\tau$ into $\+U_{e,\epsilon}^\tau$. 

Note that in this case, the universal higher $A$-ML test is a uniform higher ML-test.

\subsubsection{When $A$ is $\ML$-random} 
This relies on the fact that unrelativsed higher ML-randomness does have a universal test, indeed one constructed using the usual approach. This means it is \emph{effectively universal}; what it means for us is that for any higher ML-random~$Z$ there is a computable function~$f_Z$ such that given an index~$e$ for a higher ML-test $\seq{\+V_n}$, we have $Z\notin \+V_{f_Z(e)}$. 

So now for each~$e$, $\epsilon$ and~$n$, we enumerate a higher oracle-c.e.\ open set $\+U_{e,\epsilon,n}$ such that for all $B\in 2^\w$, if $\leb(\+W_e^B)\le \epsilon$, then $\+U_{e,\epsilon,n}^B = \+W_e^B$, and such that \[
	\leb \left( \left\{ B\in 2^\w \,:\, \leb(\+U_{e,\epsilon,n}^B) > \epsilon  \right\}  \right) \le 2^{-n};
\]		
note that the set of such~$B$'s is higher c.e.\ open, uniformly. Thus a random~$Z$ will satisfy $\leb(\+U_{e,\epsilon,n}^Z)\le \epsilon$ for all but finitely many~$n$, and further, we can find such~$n$ effectively from~$e$ and~$\epsilon$.

The construction of $\+U_{e,\epsilon,n}$ follows similar constructions in \cite{BGMContinuousHigherRandomness}. At stage~$s$ let 
\[
	Q_s = \left\{ B\in 2^\w \,:\,  \leb(\+W_{e,s}^B) \le \epsilon \right\}.
\]
This is a $\Delta^1_1$ closed set, uniformly in~$s$. We find a $\Delta^1_1$ open set $V_s\supseteq Q_s$ such that $\leb(V_s\setminus Q_s) < 2^{-n}2^{-p(s)}$ where again $p\colon \wock\to \w$ is injective. We let 
\[
	\+U_{e,\epsilon,n}^B = \bigcup \left\{ \+W_{e,s}^B \,:\,  s<\wock \andd B\in V_s \right\}. \qedhere
\]
\end{proof}

\subsection{A bad oracle for universal tests} 
\label{sub:bad_oracle_for_universal_tests}

Finally: 

\begin{theorem} \label{th:no_A_unniversal_mlr_test}
There exists an oracle~$A$ for which there is no universal higher $\ML$-test. Such~$A$ can be made higher $\Delta^0_2$ and such that $\w_1^A>\wock$. 
\end{theorem}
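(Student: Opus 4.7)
The plan is to adapt the treesh-bone machinery used in \cref{th:no_A_universal_oracle_continuous_test,thm:bad_oracle_for_consistent_Turing,th:badmlrandomoracle} to build a higher $\Delta^0_2$ oracle $A$ defeating every potential universal higher $A$-$\ML$ test. Fix an effective enumeration $\seq{\+W_e}_e$ of all uniform higher oracle c.e.\ open sequences together with a pairing $\langle e,d\rangle$. For each $e$, I will produce a higher $A$-computable real $Y_e$ and a designated level $n_e$ such that, if $\seq{\+W_{e,n}^A}_n$ is a valid higher $A$-$\ML$ test, then $Y_e \notin \+W_{e,n_e}^A$. Since higher $A$-computable reals are higher $A$-$\ML$ null by \cref{thm:ce_sets_are_not_random}, this defeats universality.

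I will work with the strategy space $(\w+1)^{<\w}$ under the standard assignment $T_{\alpha\cat k}=T_\alpha\dbrak{k}$, $T_{\alpha\cat\w}=\Nar(T_\alpha)$. At a strategy $\alpha$ with $|\alpha|=\langle e,d\rangle$ I decide the $d\tth$ bit of $Y_e$: writing $Y_{e,\alpha}$ for the current prefix, at stage $s$ search for the least $k$ such that for every $\tau\in T_\alpha\dbrak{k}$, no prefix of $Y_{e,\alpha}\cat 0$ has been enumerated as a generator of $\+W_{e,n_e}^\tau$ by stage $s$, equivalently $[Y_{e,\alpha}\cat 0]\not\subseteq \+W_{e,n_e,s}^\tau$. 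If such $k$ exists, declare $\alpha\cat k\prec \xi_s$ and $Y_{e,\alpha\cat k}=Y_{e,\alpha}\cat 0$; otherwise declare $\alpha\cat\w\prec\xi_s$ and $Y_{e,\alpha\cat\w}=Y_{e,\alpha}\cat 1$. Lexicographic monotonicity of $\seq{\xi_s}_{s<\wock}$ and convergence to a true path $\xi$ follow as in \cref{th:no_A_universal_oracle_continuous_test}, yielding $A\in \bigcap_{\alpha\prec\xi}[T_\alpha]$ higher $\Delta^0_2$, with $Y_e \le_{\wock\Tur}A$ via the standard functional $\stem(T_\alpha)\mapsto Y_{e,\alpha}$ as in \cref{subsec:computing_with_the_bad_oracle}.

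The hard part is verifying, by induction on $\alpha\prec\xi$, the invariant $[Y_{e,\alpha}]\not\subseteq \+W_{e,n_e}^A$, which implies $Y_e\notin \+W_{e,n_e}^A$. The full-subtree case follows directly from the criterion by compactness and the oracle-monotonicity $\tau\prec\sigma \Rightarrow \+W^\tau\subseteq \+W^\sigma$. The narrow case is delicate: the construction only witnesses that for every $k$ some $\rho\in T_\alpha\dbrak{k}$ has $[Y_{e,\alpha}\cat 0]\subseteq \+W_{e,n_e}^\rho$, which pertains to paths $A$ does not extend. Assuming towards contradiction that $[Y_{e,\alpha}\cat 1]\subseteq \+W_{e,n_e}^A$, compactness yields some $\tau\prec A$ with $[Y_{e,\alpha}\cat 1]\subseteq \+W_{e,n_e}^\tau$; selecting $k$ with $\sigma_k(T_\alpha)\succeq\tau$ (which exists because any node of $\Nar(T_\alpha)$ has extensions through removed branching nodes) and applying oracle-monotonicity one derives $[Y_{e,\alpha}]\subseteq \+W_{e,n_e}^\rho$ for the corresponding $\rho$, hence the lower bound $\lambda(\+W_{e,n_e}^\rho)\ge 2^{-|Y_{e,\alpha}|}$. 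Converting this into a contradiction with $A$-validity $\lambda(\+W_{e,n_e}^A)\le 2^{-n_e}$ requires scheduling $n_e$ to grow with $|Y_{e,\alpha}|$, mirroring the uniform-splitting-levels technique of \cref{th:badmlrandomoracle}, and likely interleaving a secondary diagonal that exploits measure excess of $\+W_e$ on entire subtrees (along the lines of the bad-for-consistent-functionals construction) to kill $\+W_e$ as a valid $A$-test whenever the narrow outcome would otherwise fail to escape. This bookkeeping is the technical crux of the proof.

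The auxiliary properties come for free: a hyperarithmetic $A$ would admit a universal higher $A$-$\ML$ test by trivial relativisation of the unrelativised universal test, so our construction forces $\w_1^A>\wock$; and by \cref{thm:good_oracles_for_universal_tests} the same conclusion rules out that $A$ is higher Turing above $O$, tree-collapsing, or higher $\ML$-random.
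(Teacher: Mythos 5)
The gap is in the narrow case, which you flag but misdiagnose as bookkeeping. Once the index $n_e$ is fixed, your invariant $[Y_{e,\alpha}]\not\subseteq\+W_{e,n_e}^A$ can only be propagated through the narrow outcome while $|Y_{e,\alpha}|<n_e-1$: the contradiction you want is $\lambda(\+W_{e,n_e}^A)\ge 2^{-|Y_{e,\alpha}|-1}>2^{-n_e}$, and that inequality simply stops being false once $|Y_{e,\alpha}|\ge n_e-1$. The string $\rho\in T_\alpha\dbrak{k}$ with $[Y_{e,\alpha}]\subseteq\+W_{e,n_e}^\rho$ is off $A$'s path, so it gives no bound on $\lambda(\+W_{e,n_e}^A)$. ``Scheduling $n_e$ to grow with $|Y_{e,\alpha}|$'' is circular, because the conclusion $Y_e\notin\bigcap_n\+W_{e,n}^A$ needs a single index to be avoided. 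Your ``secondary diagonal'' -- reroute $A$ through such a $\rho$ to make $\+W_{e,n_e}^A$ large and thus kill the test's validity on $A$ -- is exactly what the paper's extra outcome $\w+1$ does, but it is incompatible with $Y_e\le_{\wock\Tur}A$: since $\stem(T_{\alpha\cat\w})=\stem(T_\alpha)$, the functional $\stem(T_\beta)\mapsto Y_{e,\beta}$ maps $\stem(T_\alpha)\prec A$ to $Y_{e,\alpha}\cat 1$, while the rerouted $A$ passes through some $\s_k(T_\alpha)$ corresponding to $Y_{e,\alpha}\cat 0$, so $\Psi(A)$ becomes inconsistent.

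The paper points this out explicitly in the discussion preceding the construction of \cref{th:no_A_continuous_mlr_test}: by \cref{thm:no_self-PA} one cannot hope for $X_e\le_{\wock\Tur}A$ in this setting, and the outcome $\w+1$ is unavoidable. Instead, the paper proves the stronger \cref{th:no_A_continuous_mlr_test} by abandoning the claim $X_e\le_{\wock\Tur}A$ and showing directly that $X_e$ is not higher $A$-$\ML$-random: the noise-cancelling \cref{lemma:final_cor_univ_test} shows that along an $e$-block there is essentially no branching of the accessible-strategy tree with stems comparable to $A$, and a pairing function packing $e$-strategies into the intervals $[4^n,4^{n+1})$ turns this into a higher $A$-$\ML$-test capturing $X_e$. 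In effect, your argument in its sound range (finitely many bits per $e$) only recovers the uniform-test case already covered by \cref{th:no_A_universal_oracle_continuous_test} and discussed in \cref{sub:uniform_higher_ml_tests}; the content of \cref{th:no_A_unniversal_mlr_test} is precisely the passage to non-uniform tests, which requires the different mechanism described above.
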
	

Indeed, we construct an oracle which is ``self-PA for randomness'':

\begin{theorem} \label{th:no_A_continuous_mlr_test}
	There is an oracle~$A$ such that every nonempty higher $A$-effectively closed set contains some sequence which is not higher $A$-$\ML$ random. 
\end{theorem}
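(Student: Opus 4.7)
The plan follows the treesh-bone pattern used in \cref{thm:existence_of_bad_for_uniform_higher_self_PA} and \cref{th:badmlrandomoracle}, adapted to the non-uniform setting of higher oracle effectively closed sets $\+P_e$ that may fail to be uniformly nonempty. First I would set up the usual framework: enumerate all higher oracle effectively closed sets as $\seq{\+P_e}$, fix a computable pairing $(e,d)\mapsto\seq{e,d}$, and use strategies in $(\w\cdot 2 + 1)^{<\w}$. For a strategy $\alpha$ with $|\alpha|=\seq{e,d}$, outcome $k<\w$ corresponds to $T_{\alpha\cat k}=T_\alpha\dbrak{k}$ and extends $X_e$ by bit $0$; outcome $\w+k$ also uses $T_\alpha\dbrak{k}$ but extends $X_e$ by bit $1$; outcome $\w\cdot 2$ corresponds to $T_{\alpha\cat \w\cdot 2}=\Nar(T_\alpha)$ and signals a ``give up'' on $\+P_e$. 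At stage $s$, the accessible outcome at $\alpha$ is the least $o$ such that either $o=k$ with $[X_{e,\alpha}\cat 0]\cap\+P_{e,s}^\tau\ne\emptyset$ for all $\tau\in T_\alpha\dbrak{k}$, or $o=\w+k$ with the analogous condition for bit~$1$; otherwise $o=\w\cdot 2$. Lexicographic monotonicity of $\seq{\xi_s}$, convergence to $\xi$, and the fact that $\xi\le_{\wock\Tur} A$ are verified exactly as in the previous treesh-bone constructions.

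For each $e$ I would split the verification into two cases. In the ``no give-up'' case, where outcome $\w\cdot 2$ never occurs at any $\alpha\prec\xi$ with $|\alpha|=\seq{e,\cdot}$, an induction on $\alpha\prec\xi$ along the lines of \cref{thm:existence_of_bad_for_uniform_higher_self_PA} maintains the invariant $[X_{e,\alpha}]\cap\+P_e^B\ne\emptyset$ for all $B\in[T_\alpha]$, yielding $X_e=\bigcup_{\alpha\prec\xi}X_{e,\alpha}\in\+P_e^A$. The sequence $X_e$ is higher $A$-computable via the standard functional $\{(\stem(T_\alpha),X_{e,\alpha})\}$, hence higher $A$-c.e. (on both sides). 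By \cref{thm:ce_sets_are_not_random} and \cref{prop:ML_implies_Kurtz}, $X_e$ is not higher $A$-Kurtz random, and therefore not higher $A$-ML random, delivering the theorem's conclusion for $\+P_e$.

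The main obstacle lies in the ``give-up'' case, where outcome $\w\cdot 2$ occurs at some $\alpha^*\prec\xi$ with $|\alpha^*|=\seq{e,d}$. Here the invariant cannot be pushed past $\alpha^*$, and we must instead show that if $\+P_e^A$ is nonempty then it contains some non-higher-$A$-ML-random element. The give-up condition gives, for every $k$ and every $b\in\{0,1\}$, a string $\tau_k^b\in T_{\alpha^*}\dbrak{k}$ with $\+P_e^{\tau_k^b}\cap[X_{e,\alpha^*}\cat b]=\emptyset$. The difficulty is that $A\in[\Nar(T_{\alpha^*})]$, so the witnesses $\tau_k^b$ are not themselves prefixes of $A$, and the give-up data does not directly bound $\+P_e^A$. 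The plan is to exploit the density of the $\s_k(T_{\alpha^*})$ along paths of $\Nar(T_{\alpha^*})$ together with the admissibility of $\wock$ to assemble a higher $A$-ML null covering of $\+P_e^A\cap[X_{e,\alpha^*}]$: at each give-up step, record a hyperarithmetic open superset of the failing sections $\+P_{e,s}^{\tau_k^b}\cap[X_{e,\alpha^*}\cat b]$ with measure controlled by a $2^{-k}$-style budget, in the spirit of the uniform splitting levels from \cref{th:badmlrandomoracle} and of the covering argument in \cref{prop:ML_implies_Kurtz}. Summing these over all give-up stages along $\xi$ produces a uniform higher oracle ML-test whose $A$-section covers $\+P_e^A\cap[X_{e,\alpha^*}]$; together with the invariant at $\alpha^*$ ensuring $\+P_e^A\subseteq[X_{e,\alpha^*}]$, every element of $\+P_e^A$ is non-$A$-ML-random. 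The genuinely hard part of the proof is arranging the give-up mechanism and its measure accounting so that continuity of the sections $\+P_e^\cdot$ transports the bounds from $\tau_k^b$ to arbitrarily long prefixes of $A$, and so that this is done uniformly in $e$.
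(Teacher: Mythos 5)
There is a genuine gap, and in fact two distinct problems with your plan.

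First, the outcome scheme is internally inconsistent. You assign the same tree $T_\alpha\dbrak{k}$ to both outcome $k$ (bit~$0$) and outcome $\w+k$ (bit~$1$). If the true outcome at $\alpha$ is $\w+k$, then $\s_k(T_\alpha)=\stem(T_{\alpha\cat k})=\stem(T_{\alpha\cat(\w+k)})\prec A$, and since the approximation is left-c.e., outcome~$k$ may well have been accessible at some earlier stage. In that case both $\alpha\cat k$ and $\alpha\cat(\w+k)$ contribute axioms to your functional $\{(\stem(T_\beta),X_{e,\beta})\}$ with the same use but with incompatible values ($X_{e,\alpha}\cat 0$ versus $X_{e,\alpha}\cat 1$), so the functional is not consistent on~$A$ and the ``no give-up'' computation $X_e\le_{\wock\Tur}A$ fails. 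The paper avoids this by letting different outcomes always correspond to trees with disjoint sets of branches, which is precisely why $\cref{fact:small_tool1}$ is the workhorse behind consistency.

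Second, and more fundamentally, the ``give-up'' case cannot be handled by covering $\+P_e^A$ with a test drawn from the give-up data. The witnesses $\tau_k^b$ extend $\s_k(T_{\alpha^*})$ and hence are \emph{incomparable} with $A\in[\Nar(T_{\alpha^*})]$; continuity of the sections $\+P_e^\cdot$ tells you nothing about $\+P_e^A$ from information at strings off $A$. Moreover, the invariant $[X_{e,\alpha}]\cap\+P_e^B\ne\emptyset$ is only maintained under the \emph{uniform} hypothesis that $\+P_e^B\ne\emptyset$ for all $B$, which is exactly the hypothesis that fails in the give-up case, so you cannot use it to argue $\+P_e^A\subseteq[X_{e,\alpha^*}]$. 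Something is unavoidably missing here: by \cref{thm:no_self-PA} there is no higher self-PA oracle, so for some $\+P_e$ with $\+P_e^A\ne\emptyset$ you cannot get $X_e\le_{\wock\Tur}A$, and you also cannot passively declare defeat. The paper's remedy is the extra outcome $\w+1$ that \emph{redirects} $A$ through a string $\rho$ (found by combining two historical versions $A_s$ and $A_t$) with $\+P_e^\rho=\emptyset$, forcing $\+P_e^A=\emptyset$. This redirection, in turn, destroys exact consistency of the natural functional, which is why the paper must settle for $X_e$ merely failing $A$-ML-randomness rather than being $A$-computable, and why the careful counting encoded in the noise-cancelling lemma (\cref{lemma:universal_test_main_lemma}) and the block-structured pairing function are needed. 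Your proposal is missing both the redirection mechanism and the measure-counting argument that manages its collateral damage.
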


Again, the~$A$ constructed is higher $\Delta^0_2$ and collapses~$\wock$. Further, every higher $A$-effectively closed set contains some higher $\Delta^0_2$ sequence which is not higher $A$-ML random. 
The rest of this section is dedicated to the proof of \cref{th:no_A_continuous_mlr_test}.

\subsection{Informal discussion and basic ingredients} 
\label{sub:general_idea_of_the_construction}

How would we modify the construction of \cref{thm:existence_of_bad_for_uniform_higher_self_PA} to prove \cref{th:no_A_continuous_mlr_test}? Let $\+P$ be a higher oracle effectively closed set. Consider the first step toward dealing with~$\+P$, and suppose that we are working above some tree $T\in \+T$. As above, we would set $X(0)=0$ and try the various subtrees $T\dbrak{k}$ until we get, for each such~$k$, some $\s\in T\dbrak{k}$ such that $\+P^\s\cap [0] = \emptyset$. We then switch to setting $X(0)=1$ and take the narrow subtree $\Nar(T)$. However, it is now possible that there is some $\tau\in \Nar(T)$ such that $\+P^\tau\cap [1]=\emptyset$. In the previous construction, this proved that for some~$B$, $\+P^B=\emptyset$, in which case we didn't need to worry about~$\+P$ at all. Now we cannot dismiss~$\+P$ out of hand; but an obvious solution would be to ensure that $\+P^A$ is empty. This is done by first choosing $\tau\in \Nar(T)$ such that $\+P^\tau\cap [1]=\emptyset$, and some $k$ large enough so that $\s_k(T)\succ \tau$; then choosing some $\s\in T\dbrak{k}$ for which $\+P^\s\cap [0]=\emptyset$ (we may assume $\s\succ \tau$). We make sure that~$A$ extends this~$\s$ by finding some~$m$, sufficiently large, so that $\s_m(T\dbrak{k})\succeq \s$ and routing the construction to work in the tree $T\dbrak{k}\dbrak{m}$. 

This extra step -- going back to extend some $\s_k(T)$ -- creates serious difficulties for the part of the agrument that above gives $\xi\le_{\wock\Tur} A$; and indeed by \cref{thm:no_self-PA} we know that an attempt to get $X\le_{\wock\Tur} A$ must fail. Instead, we make $X$ not random relative to~$A$, which amounts to enumerating, with oracle~$A$, for each~$n$, relatively few possibilities for $X\rest{n}$. Of course for~$\+P$ itself there is nothing left to do, as we ensure that $\+P^A = \emptyset$; however, the effect of this action on other requirements which handle other effectively closed sets can be, if we are not careful, disastrous. 

We cannot get $X\le_{\wock\Tur}A$ because there will be instances in which $\stem(T_{n,s})\prec A$ but $X_{s}\rest{n}\nprec X$. The main driver of this construction is minimising the number of times this happens. 

\subsubsection{Mapping strategies to trees} 

Let $\seq{\+P_e}$ be an effective list of all higher oracle effectively closed sets. We will approximate an oracle~$A$ and for each~$e$, a sequence~$X_e$ and ensure that the $e\tth$ requirement is met:
\begin{itemize}
	\item If $\+P_e^A$ is nonempty, then $X_e\in \+P_e^A$, and $X_e$ is not higher $A$-ML random.
\end{itemize}

The collection of strategies is now $(\w+2)^{<\w}$ (with the new outcome $\w+1$ indicating that we are going back to make $\+P_e^A$ empty). For each~$e$ and each strategy we will define $X_{e,\alpha}$, again managing lengths so that $X_{e,\alpha}(d)$ is defined if and only if $\seq{e,d}<|\alpha|$; for $n= \seq{e,d}$, $X_{e,\alpha}(d)=0$ if $\alpha(n)<\w$, and $X_{e,\alpha}(d)=1$ otherwise. 

\emph{However}, we will shortly show that we need to be judicious about the choice of pairing function $(e,d)\mapsto \seq{e,d}$. 

\smallskip

A new ascpect of our mapping of strategies to trees will be that $T_\alpha$ will not be defined for all strategies~$\alpha$; rather, it will be defined for all strategies that are accessible during some stage of the construction. The issue is with assigning a tree to the outcome $\w+1$, which we can only do only once we found strings on previous trees that allow us to make some $\+P_e^A$ empty. In the meantime, we define:
\begin{itemize}
	\item $T_{\epsilon} = 2^{<\w}$;
	\item If $T_\alpha$ is defined, then $T_{\alpha\cat k} = \Nar(T_\alpha\dbrak{k})$;
	\item If $T_\alpha$ is defined, then $T_{\alpha\cat \w} = \Nar(T_\alpha)$;
	\item If $\alpha\cat (\w+1)$ is ever accessible then we will define $T_{\alpha\cat(\w+1)} = T_\alpha\dbrak{k}\dbrak{m}$ for some $k,m<\w$. 
\end{itemize}
Note that we let $T_{\alpha\cat k} = \Nar(T_\alpha \dbrak{k})$ rather than $T_\alpha\dbrak{k}$. This ensures that in all cases, if $T_\alpha$ and~$T_\beta$ are defined and $\alpha\perp \beta$ then $[T_\alpha]\cap [T_\beta]=\emptyset$.

\subsubsection{Counting and bounding} 

Suppose that $\stem (T_\alpha)\prec A$. Then there are at most three outcomes $o<\w+2$ such that $\stem(T_{\alpha\cat o})\prec A$, namely because $k\ne l <\w$ implies $\stem(T_{\alpha\cat k})\perp \stem(T_{\alpha\cat l})$. However, for some fixed~$e$, $X_{e,\alpha\cat o}$ extends $X_{e,\alpha}$ by at most one bit, so this bound on the number of outcomes will give us no bound at all on the number of possible versions of $X_e\rest{n}$ that get enumerated by~$A$. 

The main aim of the construction, \cref{lemma:final_cor_univ_test}, is to ensure that if $\+P_e^A$ is nonempty, $\alpha$ is an \emph{$e$-strategy}, which means that $|\alpha| = \seq{e,d}$ for some~$e$, $\alpha$ is accessible at some stage, and $\stem(T_\alpha)\prec A$, then there is at most \emph{one} possible outcome~$o$ such that $\alpha\cat o$ is accessible at some stage and $\stem(T_{\alpha\cat o})\prec A$. Note that if~$\alpha$ is an $e$-strategy then $|X_{e,\alpha\cat o}| = |X_{e,\alpha}|+1$. 

This informs us how to choose the pairing function $(e,d)\mapsto\seq{e,d}$. What we need is a long stretch of $e$-strategies, which would not be disturbed by other strategies in their midst. For suppose that~$n>m$, and that every $k$ in the interval $[m,n)$ is of the form $\seq{e,d}$ for some~$d$. By induction, there are at most $3^m$ many strategies $\beta$ of length~$m$ such that $\stem(T_\beta)\prec A$. However, if we achieve \cref{lemma:final_cor_univ_test}, then there will be at most $3^m$ many strategies~$\alpha$ of \emph{length~$n$} such that $\stem(T_\alpha)\prec A$; and for each such strategy we have $|X_{e,\alpha}|\ge n-m$. So if we take~$n$ much larger than~$m$ this will allow us to capture~$X_e$ in a set of small measure. 

We thus partition the natural numbers into the intervals $[4^n,4^{n+1})$, and assign each interval a number~$e$, in such a way that every number is assigned infinitely many intervals. We then let $\seq{e,d}$ be the $d\tth$ element of the union of all intervals assigned to~$e$. Note that we preserve the requirement
\begin{itemize}
	\item $\seq{e,d}<\seq{e,d+1}$ for all~$e$ and~$d$. 
\end{itemize}

\subsubsection{Noise cancelling} 

The question remains, how do we achieve the ``noise-cancelling'' \cref{lemma:final_cor_univ_test}? Here our construction starts to deviate significantly from those we had above. Suppose that~$\alpha$ is an $e$-strategy, and that $\+P_e^A$ is nonempty. We also assume that we are working after the stage at which the shortest~$e$-strategy has stabilised. Our first mission is to ensure:
\begin{itemize}
 	\item $\alpha\cat (\w+1)\nprec \xi$. 
 \end{itemize} 
The basic idea is that if $\alpha\cat (\w+1)\prec \xi$ then we can make $\+P_e^A\cap [X_{\alpha}]=\emptyset$; but then, by induction, we either would have moved away from~$\alpha$, or had the means to erase the last bit of~$X_\alpha$ and get an even larger chunk taken out of $\+P_e^A$; eventually, by induction, we would get $\+P_e^A=\emptyset$. 

The next step is to show:
\begin{itemize}
	\item If $k<\w$, $\alpha\cat k$ is accessible at some stage, but also $\alpha\cat o$ is accessible at some stage with $o>k$, then $\stem(T_{\alpha]\cat k})\nprec A$. 
\end{itemize}

This will give us what we want: at most one accessible outcome~$o$ of~$\alpha$ would give a stem comparable with~$A$. How do we achieve this? The difficulty is with some $\beta\prec \alpha$ taking the outcome $\w+1$, and instructing us to take a string extending $\s_k(T_\alpha)$ in order to make $\+P_{e'}^A$ empty for some $e'\ne e$. 

This would be quite bad, and so we see that~$\beta$ cannot immediately take the outcome $\w+1$ when it sees any string $\tau\in \Nar(T_\beta)$ with $\+P_{e'}^\tau\cap [1]=\emptyset$. On the other hand, it does not actually need to: even if $\beta\prec \xi$, we only care about whether $\+P_{e'}^A$ is empty or not, not whether $\+P_{e'}^B$ is empty for some other $B\in [T_\beta]$. At every stage~$s$, $\seq{T_\alpha}_{\alpha\prec \xi}$ (induces) a shrinking path in $\+T$, and so determines an approximation $A_s$ to~$A$; we only need to act when we see some $\tau\prec A_s$ making $\+P_{e'}^\tau\cap [1]$ empty. If $A_s$ has already moved away from $\s_k(T_\alpha)$, then $\tau$ will not extend that string. That is the main idea of how to achieve noise cancelling. It does mean that our construction looks quite different from previous ones, as at each stage we only investigate what happens along $A_s$, not on an entire tree. 

In turn, this creates some difficulties. Consider, for example, the second step of determining $T_{\beta\cat \w}$, namely, after we found~$k$, we need to find~$\s\in \Nar(T_{\beta}\dbrak{k})$ such that $\+P_{e'}^\s\cap [0]=\emptyset$ and some~$m$ such that $\s_m(T_\beta\dbrak{k})$ extends~$\s$. And again, we do not want to extend some string $\s_l(T_{\alpha})$; we need to find such~$\s$ which is an initial segment of some historical version~$A_t$ of~$A$. So that we have such a version to point to, we need, for example, at times, to let~$A$ pass through strings we know at the time are not going to be actual initial segments of~$A$. This is quite counter-intuitive compared to most constructions in computability, but appears to be neccessary in this one.

\subsection{The construction}

\subsubsection{Construction claims}

Before starting the construction we make a few claims which we will make use of during the construction; we will verify them after we state the construction. 

\begin{enumerate}[label=\textbf{Claim \arabic*.},align=left,ref={Claim \arabic*}]

\item \label{Univ:ConstClaim:left-c.e.}
For all $t<s<\wock$, $\xi_t\le \xi_s$. 

\item \label{Univ:ConstClaim:PreviousOutcomesAccessible}
For every $\alpha$, $s$ and outcomes~$p<o\le \w+1$, if $\alpha\cat o\prec \xi_s$ then there is some $t<s$ such that $\alpha\cat p\prec \xi_t$. 

\item \label{Univ:ConstClaim:Shrinking}
For each stage~$s$, the sequence $\{T_{\alpha}\}_{\alpha \prec \xi_s}$ induces a shrinking path of~$\T$, and so $\bigcap_{\alpha\prec \xi_s} [T_\alpha]$ is a singleton $\{A_s\}$. 

\item \label{Univ:ConstClaim:AccessibleClosed}
For each~$\alpha$, $\left\{ s<\wock \,:\,  \alpha\prec \xi_s \right\}$ is a closed interval of~$\wock$. 

\item \label{Univ:ConstClaim:last_k_stage}
For each $e$-strategy~$\alpha$ and $k<\w$, if~$\alpha\cat k\prec \xi_s$, $\alpha\prec \xi_{s+1}$ but $\alpha\cat k\nprec \xi_{s+1}$, then $\+P_{e,s}^{A_s}\cap [X_{e,\alpha}\cat 0]=\emptyset$. 

\item \label{Univ:ConstClaim:omegaplusone_outcome}
For each $e$-strategy~$\alpha$ and~$s$, if $\alpha\cat (\w+1)\prec \xi_s$ then $\+P_{e,s}^{\rho}\cap [X_{e,\alpha}]=\emptyset$ for $\rho = \stem(T_{\alpha\cat{(\w+1)}})$.

\end{enumerate}

By induction on $s<\wock$ we define $\xi_s\in (\w+1)^\w$, and for all $\alpha$ which is ever accessible, a tree $T_\alpha\in \+T$, following the scheme described above.

\subsubsection{The construction:}

At stage $s=0$ we set $\xi_0 = 0^\infty$. 

\medskip

Suppose that $s<\wock$ is a limit stage. Let~$n$ be the largest ordinal $n\le \w$ such that $\seq{\xi_t\rest{n}}_{t<s}$ is stable on some final segment of~$s$. Determine that this stable version $\lim_{t\to s} \xi_t\rest{n}$ is an initial segment of $\xi_s$. If $n<\w$, let $\xi_s(n) = \w = \sup_{t<s} \xi_t(n)$, and for $m>n$ let $\xi_s(m)=0$. Note that $T_\alpha$ is defined for all $\alpha\prec \xi_s$, as no new value $\w+1$ is introduced at this stage.

\medskip

Now, let $s<\wock$, and suppose that $\xi_s$ has already been defined, along with $T_\alpha$ for all $\alpha\prec \xi_s$. We define $\xi_{s+1}$. 

For each $e<\w$ let $\alpha_{e,s} = \xi_s\rest{\seq{e,0}}$ be the shortest $e$-strategy $\prec \xi_s$. An $e$-strategy $\alpha\prec \xi_s$ \emph{has a problem} at stage~$s$ if $\alpha_{e,s}\cat (\w+1)\nprec \xi_s$, and one of the following holds:
\begin{enumerate}[label=(\arabic*)]
  	\item $\alpha\cat k\prec \xi_s$ for some $k<\w$, and $\+P_{e,s}^{A_s}\cap[X_{e,\alpha}\cat 0]=\emptyset$.
  	\item $\alpha\cat \w\prec \xi_s$, and $\+P_{e,s}^{A_s}\cap[X_{e,\alpha}\cat 1]=\emptyset$.
  	\item $\alpha\cat (\w+1)\prec \xi_s$.
\end{enumerate}  

If no initial segment of $\xi_s$ has a problem at stage~$s$ then we let $\xi_{s+1} = \xi_s$. Suppose otherwise. Our instructions are not quite to deal with the shortest initial segment of~$\xi_s$ which has a problem at stage~$s$. Rather, we prioritse case~(3). That is, we choose~$\alpha\prec \xi_s$ which has a problem at stage~$s$, and among all $\tilde\alpha\prec \xi_s$ which have a problem at stage~$s$, 
\begin{itemize}
	\item $\alpha$ is the shortest with $\alpha\cat(\w+1)\prec \xi_s$; or,
	\item if there is no $\tilde \alpha$ which has a problem at stage~$s$ and $\tilde \alpha\cat (\w+1)\prec \xi_s$, then we let~$\alpha$ be the shortest which has a problem at stage~$s$. 
\end{itemize}

We act according to the cases above. 

\smallskip

In case~(1), we let $\alpha\cat(k+1)\prec \xi_{s+1}$, and for all $m>|\alpha|$ we let $\xi_s(m)=0$. 

\smallskip

In case~(2), we let $\alpha\cat (\w+1)\prec \xi_{s+1}$, and for all $m>|\alpha|$, we let $\xi_s(m)=0$. 

\smallskip

In case~(3), we know that $\alpha$ is a proper extension of $\alpha_e$. Let $\beta\prec \alpha$ be the longest $e$-strategy before~$\alpha$. That is, $\beta = \alpha\rest{\seq{e,d-1}}$, where $|\alpha| = \seq{e,d}$. Let $o$ be such that $\beta\cat o\prec \xi_s$; by minimality of~$\alpha$, we know that $o\ne \w+1$. We let $\beta\cat (o+1)\prec \xi_{s+1}$, and for $m>|\beta|$, we let $\xi_{s+1}(m)=0$. 

\smallskip

In case~(2) and in case~(3) when $\beta\cat \w\prec \xi_s$, the new outcome chosen is~$\w+1$. Let $\gamma = \alpha$ in case (2), and $\gamma = \beta$ in case~(3) when $\beta\cat \w\prec \xi_s$. By \ref{Univ:ConstClaim:left-c.e.}, the strategy $\gamma\cat(\w+1)$ was not accessible at any stage $t\le s$. Hence we now need to define $T_{\gamma\cat(\w+1)}$. In either case~(2) or~(3), $\gamma\cat\w\prec \xi_s$. Also, $\+P_{e,s}^{A_s}\cap [X_{e,\gamma}\cat 1] = \emptyset$; in case~(2) by choice of $\gamma=\alpha$, in case~(3) by applying \ref{Univ:ConstClaim:omegaplusone_outcome} to~$\alpha$, and noting that $\beta\cat \w\preceq \alpha$ means that $X_{e,\alpha} = X_{e,\beta}\cat 1$. 

We need to find $k,m<\w$ to let $T_{\gamma\cat (\w+1)} = T_\gamma\dbrak{k}\dbrak{m}$. This is done in two similar steps. At the first step, we find some $\tau\prec A_s$ such that $\+P_{e,s}^{\tau}\cap [X_{e,\gamma}\cat 1] = \emptyset$. Recall that each $\s_k(T_\gamma)$ extends an odd level branching node of $T_\gamma$, and that these are dense in~$T_\gamma$; so we find some $\tau'\succeq \tau$, $\tau'\prec A_s$, which is an odd-level branching node of~$T_\gamma$. Note that $A_s\in [\Nar(T_\gamma)] = [T_{\gamma\cat \w}]$; so $\tau'\cat 0\prec A_s$, while for some~$k$, $\tau'\cat 1\preceq \s_k(T_\gamma)$, and $\tau'\cat 1\notin \Nar(T_\gamma)$. The property we are after is:
\begin{itemize}
	\item For all $\rho\in T_{\gamma\cat \w}$, $\rho \prec \s_k(T_\gamma)\Then \rho\prec A_s$. 
\end{itemize}
So this determines~$k$. Now, as $k<\w$ and $\gamma\cat\w\prec \xi_s$, by \ref{Univ:ConstClaim:PreviousOutcomesAccessible}, there is some stage~$t<s$ such that $\gamma\cat k\prec \xi_t$; by \ref{Univ:ConstClaim:AccessibleClosed}, we can choose the last such~$t$. Note that $t<s$ and that $\gamma \prec \xi_{t+1}$ (follows from $\gamma\prec \xi_s$ and \ref{Univ:ConstClaim:left-c.e.}); by \ref{Univ:ConstClaim:last_k_stage}, $\+P_{e,t}^{A_t}\cap [X_{e,\gamma}\cat 0]= \emptyset$. Note that $\s_k(T_\gamma) = \stem (T_{\gamma\cat k}) \prec A_t$; we find some $\s\prec A_t$, $\s\succeq\s_k(T_\gamma)$, such that $\+P_{e,t}^\s\cap [X_{e,\gamma}\cat 0] = \emptyset$. We then repeat the same method to find~$m$: we note that $A_t\in [T_{\gamma\cat k}] = \Nar(T_\gamma\dbrak{k})$. We find~$m$ such that $\s_m(T_\gamma\dbrak{k})$ extends an odd-level branching point of $T_\gamma\dbrak{k}$ which is an initial segment of $A_t$ which extends~$\s$. Thus, 
\begin{itemize}
	\item For all $\rho\in T_{\gamma\cat k}$, $\rho \prec \s_m(T_\gamma\dbrak{k})\Then \rho\prec A_t$. 
\end{itemize}
We let $T_{\gamma\cat(\w+1)} = T_\gamma\dbrak{k}\dbrak{m}$. 

\smallskip

This concludes the construction.

\subsection{Verification}

\subsubsection{Construction claims} 
Of the six construction claims above, most are immediate by induction on stages and examining the construction. For \ref{Univ:ConstClaim:Shrinking}, as in a previous construction, for each~$e$ such that $\+P_e^B = 2^\w$ for all~$B$, by induction on~$s$, we see that for every $e$-strategy~$\alpha$, $\alpha\cat 0\prec \xi_s$. For \ref{Univ:ConstClaim:omegaplusone_outcome}, we chose $\stem(T_{\gamma\cat(\w+1)}) = \s_m(T_\gamma\dbrak{k})$ which extended both~$\s\prec A_t$ such that $\+P_{e,t}^{\s}\cap [X_{e,\gamma}\cat 0]=\emptyset$ and some $\tau\prec A_s$ such that $\+P_{e,s}^{\tau}\cap [X_{e,\gamma}\cat 1]=\emptyset$. 

The only claim which may need a bit more arguing is \ref{Univ:ConstClaim:last_k_stage}. Let~$\alpha$ and~$k$ be as in the claim. If case~(1) holds at stage~$s$, then the claim is clear from our instructions. However, it is possible that case~(3) holds for the next $e$-strategy $\delta\succ \alpha$: so $\delta\cat(\w+1)\prec \xi_s$, and $\alpha\cat k\preceq \delta$. In this case $X_{e,\delta} = X_{e,\alpha}\cat 0$. By \ref{Univ:ConstClaim:omegaplusone_outcome}, $\+P^\rho_{e,s}\cap [X_{e,\delta}] = \emptyset$ for $\rho = \stem(T_{\delta\cat (\w+1)})$, and as $\delta\cat(\w+1)\prec \xi_s$, we have $\rho\prec A_s$.

\subsubsection{Convergence}
\ref{Univ:ConstClaim:left-c.e.} implies that $\seq{\xi_s}_{s<\wock}$ converges to some $\xi\in (\w+2)^\w$, and so that $\seq{A_s}$ converges to~$A\in \bigcap_{\alpha\prec \xi} [T_\alpha]$ and that each~$\seq{X_{e,s}}$ converges to some~$X_e$, computed from~$\xi$.

\subsubsection{On the outcome $\w+1$} 

As we mentioned above, during the construction we sometimes chose $\alpha\cat(\w+1)\prec \xi_s$ for some~$e$-strategy $\alpha$ which is not the shortest $e$-strategy. This was done so that we could point at~$A_s$ at some later stage; but in the limit, this will not happen, unless $\+P_e^A$ is empty.

For $e<\w$, let $\alpha_e = \alpha_{e,\wock} = \xi\rest{\seq{e,0}}$ be the shortest $e$-strategy $\alpha\prec \xi$. 

\begin{lemma} \label{lem:the_omega_plus_one_outcome}
	Suppose that there is an $e$-strategy $\alpha$ such that $\alpha\cat (\w+1)\prec \xi$. Then $\alpha_e\cat (\w+1)\prec \xi$, and $\+P_e^A = \emptyset$. 
\end{lemma}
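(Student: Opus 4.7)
The plan is to first prove a stronger statement by strong induction on $|\gamma|$: every $\gamma \in T := \{\tilde\alpha : \tilde\alpha \cat (\w+1) \prec \xi\}$ which is an $e$-strategy satisfies $\alpha_e \in T$. Once this is established, applying it to $\gamma = \alpha$ gives the first conclusion $\alpha_e \cat (\w+1) \prec \xi$. The second conclusion then follows from \ref{Univ:ConstClaim:omegaplusone_outcome}: since the length convention makes $X_{e,\alpha_e}$ the empty string (coordinate $d$ would be defined only when $\seq{e,d} < \seq{e,0}$, which is impossible), at any stage $s$ with $\alpha_e \cat (\w+1) \prec \xi_s$ we get $\+P_{e,s}^\rho = \emptyset$ for $\rho = \stem(T_{\alpha_e \cat(\w+1)})$. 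Intersecting over all such $s$ gives $\+P_e^\rho = \emptyset$, and since $\rho \prec A$, $\+P_e^A \subseteq \+P_e^\rho = \emptyset$.

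For the inductive step, fix $\gamma \in T$ with $\gamma \ne \alpha_e$, let $\gamma'$ be the preceding $e$-strategy, and set $o = \xi(|\gamma'|)$ so that $\gamma' \cat o \preceq \gamma$. If $o = \w+1$, then $\gamma' \in T$ is shorter than $\gamma$ and the induction hypothesis applied to $\gamma'$ concludes. Otherwise $o \ne \w+1$, and I also assume $\alpha_e \notin T$ (else there is nothing to show), aiming for a contradiction. Choose any stage $s$ large enough that $\xi_s$ agrees with $\xi$ on all positions $\le |\gamma|$: then $\alpha_{e,s} = \alpha_e$ and $\alpha_{e,s} \cat (\w+1) \nprec \xi_s$, so $\gamma$ satisfies case~(3)'s problem condition. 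The construction therefore selects some $\alpha^* \preceq \gamma$ --- the shortest $\tilde\alpha \prec \xi_s$ with $\tilde\alpha \cat (\w+1) \prec \xi_s$ that has a problem. Writing $\alpha^*$ as an $e^*$-strategy, stabilisation puts $\alpha^* \in T$, and the problem condition (combined with $\alpha_{e^*,s} = \alpha_{e^*}$ at sufficiently large $s$) forces $\alpha_{e^*} \notin T$; in particular $\alpha^* \ne \alpha_{e^*}$, so case~(3) fires for $\alpha^*$.

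Two subcases then close the argument. If $|\alpha^*| < |\gamma|$, the induction hypothesis applied to $\alpha^*$ puts $\alpha_{e^*}$ into $T$, contradicting the previous sentence. If $\alpha^* = \gamma$, then the construction's action assigns $\xi_{s+1}(|\gamma'|) = o+1$ while leaving $\xi_{s+1}$ in agreement with $\xi_s$, and hence with $\xi$, on all positions $< |\gamma'|$; since $\xi(|\gamma'|) = o$, this makes $\xi_{s+1}$ lexicographically strictly larger than $\xi$, contradicting \ref{Univ:ConstClaim:left-c.e.}. The main obstacle I expect is the first subcase: a priori the construction may at stage $s$ act on some $\alpha^* \ne \gamma$, possibly an $e^*$-strategy with $e^* \ne e$, and it is stabilisation together with the induction hypothesis that lets me resolve this cleanly.
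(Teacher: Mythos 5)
Your proof is correct and rests on the same mechanism as the paper's: if $\alpha_e\cat(\omega+1)\nprec\xi$ while some longer $e$-strategy has outcome $\omega+1$ along $\xi$, then at a sufficiently late stage that strategy has a problem via case~(3), forcing the construction to act on the shortest $(\omega+1)$-outcome strategy with a problem, and that action pushes $\xi_{s+1}$ lexicographically strictly past $\xi$, contradicting \ref{Univ:ConstClaim:left-c.e.}. The paper simply takes $\beta$ to be the shortest $e$-strategy with $\beta\cat(\omega+1)\prec\xi$ and derives the contradiction directly, whereas you wrap this in a strong induction over all $e'$; the inductive hypothesis you invoke in the subcase $|\alpha^*|<|\gamma|$ is actually dispensable, since that subcase already yields $\xi_{s+1}>\xi$ by the same lexicographic argument you use when $\alpha^*=\gamma$ (the construction's action at position $|\beta^*|<|\alpha^*|\le|\gamma|$, where $\xi_s$ still agrees with $\xi$). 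So the induction is a harmless detour, not a different route.
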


\begin{proof}
	We first show that $\alpha_e\cat(\w+1)\prec \xi$. For let~$\beta$ be the shortest $e$-strategy such that $\beta\cat (\w+1)\prec \xi$. Suppose that $\beta\ne \alpha_e$. By \ref{Univ:ConstClaim:left-c.e.}, for all~$s$, $\alpha_e\cat (\w+1)\nprec \xi_s$. By \ref{Univ:ConstClaim:omegaplusone_outcome}, as $\stem(T_{\beta\cat(\w+1)})\prec A_s$ for sufficiently late~$s$, for such~$s$ we have $\+P_{e,s}^{A_s}\cap [X_{e,\beta}]= \emptyset$; eventually, this~$\beta$ will receive attention and cause $\xi_{s+1}$ to move to the right of~$\beta$, which contradicts \ref{Univ:ConstClaim:left-c.e.}. 

	That $\+P_e^A = \emptyset$ follows from $X_{e,\alpha_e} = \epsilon$ and \ref{Univ:ConstClaim:omegaplusone_outcome}. 
\end{proof}

For the next lemma, note that if $\alpha\cat (\w+1)\prec \xi_s$ and $\alpha$ has a problem at stage~$s$ (that is, if $\alpha_{e,s}\cat (\w+1)\nprec \xi_s$), then $\alpha$ lies to the left of $\xi_{s+1}$. This is because such~$\alpha$ have priority at that stage. 

\begin{lemma} \label{lem:the_missing_lemma}
	At each stage~$s$ there is at most one $\alpha\prec \xi_s$ which has a problem at stage~$s$ and $\alpha\cat(\w+1)\prec \xi_s$. 
\end{lemma}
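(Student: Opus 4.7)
The plan is to argue by contradiction: assume there are two distinct strategies $\alpha_1\prec\alpha_2\prec\xi_s$, both having a problem at stage $s$, and with $\alpha_i\cat(\w+1)\prec\xi_s$ for $i=1,2$; let $e_i$ be such that $\alpha_i$ is an $e_i$-strategy. I would first observe that $\alpha_1\cat(\w+1)\preceq\alpha_2$, since $\alpha_1\cat(\w+1)$ and $\alpha_2$ are both initial segments of $\xi_s$ with $|\alpha_1|+1\le|\alpha_2|$. In particular $\alpha_2(|\alpha_1|)=\w+1$.

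Next I would look back at the stage at which $\alpha_2$'s outcome $\w+1$ was introduced: let $t_2<s$ be the largest stage with $\alpha_2\cat(\w+1)\nprec\xi_{t_2}$. The action at stage $t_2+1$ was then either case~(2) with $\alpha=\alpha_2$ or case~(3) with $\beta=\alpha_2$; either way $\alpha_2\prec\xi_{t_2}$, so by the previous step $\alpha_1\cat(\w+1)\prec\xi_{t_2}$ as well. Since $\xi_{t_2}$ and $\xi_s$ both extend $\alpha_1\cat(\w+1)$, they agree on all positions $\le|\alpha_1|$, and in particular at position $\seq{e_1,0}\le|\alpha_1|$; this gives $\alpha_{e_1,s}=\alpha_{e_1,t_2}$ and $\xi_s(\seq{e_1,0})=\xi_{t_2}(\seq{e_1,0})$.

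The argument then splits on whether $\alpha_{e_1,t_2}\cat(\w+1)\prec\xi_{t_2}$. If so, transferring the value at position $\seq{e_1,0}$ from $\xi_{t_2}$ to $\xi_s$ yields $\alpha_{e_1,s}\cat(\w+1)\prec\xi_s$, directly contradicting the prelude for $\alpha_1$ to have a problem at stage~$s$. Otherwise, $\alpha_1$ itself already has a problem at stage $t_2$ via condition~(3), since $\alpha_1\cat(\w+1)\prec\xi_{t_2}$ and $\alpha_{e_1,t_2}\cat(\w+1)\nprec\xi_{t_2}$. The priority rule then forces the action at stage $t_2+1$ to be case~(3) on some $\tilde\alpha$ with $\tilde\alpha\cat(\w+1)\prec\xi_{t_2}$ having a problem; since $\alpha_2$ does not qualify (by choice of $t_2$) while $\alpha_1$ does, such a $\tilde\alpha$ satisfies $\tilde\alpha\preceq\alpha_1\prec\alpha_2$. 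Letting $\tilde\beta$ be the previous same-index strategy before $\tilde\alpha$, the case~(3) action modifies $\xi$ at position $|\tilde\beta|<|\tilde\alpha|\le|\alpha_1|<|\alpha_2|$, giving $\xi_{t_2+1}(|\tilde\beta|)\ne\xi_{t_2}(|\tilde\beta|)=\alpha_2(|\tilde\beta|)$, hence $\alpha_2\nprec\xi_{t_2+1}$, contradicting $\alpha_2\cat(\w+1)\prec\xi_{t_2+1}$.

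The main obstacle will be the careful bookkeeping: verifying that $\xi_{t_2}$ and $\xi_s$ agree low enough to transfer the value at position $\seq{e_1,0}$, and that the priority of case~(3) over cases~(1)--(2) genuinely forces the stage-$t_2{+}1$ action to land strictly below position $|\alpha_2|$. The key structural point is that once two outcomes $\w+1$ are ``stacked'' along $\xi_s$, the upper one $\alpha_2\cat(\w+1)$ was appointed at a stage where the lower $\alpha_1\cat(\w+1)$ was already in place, so whichever index $e_1$ the lower one belongs to, the shortest $e_1$-strategy's outcome at position $\seq{e_1,0}$ is already frozen in one of the two ways that both contradict $\alpha_1$ having a problem.
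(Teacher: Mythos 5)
Your proof is correct, and its architecture differs from the paper's in a way worth noting. The paper splits on whether the stage~$s$ is a limit or a successor: at limit stages it shows there is in fact \emph{no} such $\alpha$, and at a successor stage $s+1$ it looks exactly one step back to $\xi_s$, establishes $\alpha\cat\w\prec\xi_s$, rules out any $\delta\prec\alpha$ with the same property at $s$ (and hence at $s+1$), and uses the reset of $\xi_{s+1}(m)=0$ for $m>|\alpha|$ to rule out longer candidates. You instead assume two offenders $\alpha_1\prec\alpha_2$ at an arbitrary stage~$s$, trace back to the successor stage $t_2+1$ at which $\alpha_2\cat(\w+1)$ was first appointed, show that $\alpha_1\cat(\w+1)\preceq\alpha_2$ was already present at $t_2$ so that $\alpha_1$ already has a case~(3) problem then (your first subcase is vacuous, since the agreement at position $\seq{e_1,0}$ together with $\alpha_1$ having a problem at~$s$ already forces $\alpha_{e_1,t_2}\cat(\w+1)\nprec\xi_{t_2}$, but stating it as a dichotomy does no harm), and then invoke the priority given to case~(3) to force the action at $t_2+1$ to route $\xi$ strictly below position $|\alpha_2|$, contradicting $\alpha_2\cat(\w+1)\prec\xi_{t_2+1}$. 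This avoids the limit/successor case split entirely and proves only the stated ``at most one'', not the stronger ``none at limits'' that the paper's proof incidentally yields. The one thing you take for granted and should flag is that $t_2$ exists: one needs the construction's observation that limit stages never introduce the value $\w+1$, so by \ref{Univ:ConstClaim:AccessibleClosed} the first stage with $\alpha_2\cat(\w+1)\prec\xi_t$ is a successor, giving a well-defined predecessor $t_2$.
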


\begin{proof}
	At a limit stage~$s$ there is no such~$\alpha$. Suppose that~$\alpha\prec \xi_s$ shows otherwise. There is some $t<s$ such that $\alpha\cat(\w+1)\prec \xi_t$. Then $\alpha_{e,s} = \alpha_{e,t}$, and $\alpha_{e,t}\cat(\w+1)\nprec \xi_t$ (as it is $\nprec \xi_s$), so~$\alpha$ has a problem at stage~$t$ and we would route $\xi_{t+1}$ to the right of~$\alpha$. 

\smallskip

	Let~$s$ be a stage and suppose that $\alpha\cat(\w+1)\prec \xi_{s+1}$, and has a problem at stage~$s+1$. Then $\alpha\cat\w\prec \xi_s$ (or $\alpha$ would have a problem at stage~$s$ and we would route $\xi_{s+1}$ to the right of~$\alpha$). No $\delta\prec \alpha$ has $\delta\cat(\w+1)\prec \xi_s$ and has a problem at stage~$s$, or again we would have $\alpha<\xi_{s+1}$. Hence there is no such~$\delta\prec \alpha$ at stage~$s+1$ as well. Also, for all $m\ge |\alpha|$, $\xi_{s+1}(m)\ne \w+1$, which shows the uniqueness of~$\alpha$ at stage~$s$. 
\end{proof}

\subsubsection{The sequences $X_e$}

\begin{lemma} \label{lem:MLR:X_e_in_P_A}
	For all~$e$, if $\+P_e^A\ne\emptyset$ then $X_e\in \+P_e^A$. 
\end{lemma}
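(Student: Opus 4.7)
The plan is to prove, by induction along the sequence of $e$-strategies on the true path $\xi$, that $[X_{e,\alpha}]\cap \+P_e^A\ne\emptyset$ for every $e$-strategy $\alpha\prec\xi$. Since $\+P_e^A\ne\emptyset$, \cref{lem:the_omega_plus_one_outcome} implies that no $e$-strategy on $\xi$ takes outcome $\w+1$; hence whenever $\alpha\prec\xi$ is an $e$-strategy with $\alpha\cat o\prec\xi$ we have $o\le\w$ and $X_{e,\alpha\cat o}=X_{e,\alpha}\cat b$, where $b=0$ if $o<\w$ and $b=1$ if $o=\w$. The base case $\alpha=\alpha_e$ is immediate from the hypothesis, since $X_{e,\alpha_e}=\emptystring$ and $[X_{e,\alpha_e}]\cap\+P_e^A=\+P_e^A$.

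For the inductive step, suppose $[X_{e,\alpha}]\cap\+P_e^A\ne\emptyset$ yet $[X_{e,\alpha}\cat b]\cap\+P_e^A=\emptyset$. By compactness of the cylinder and the representation $\+P_e^A=\bigcap_{\tau\prec A,\,s<\wock}\+P_{e,s}^\tau$, pick $\tau\prec A$ and $s<\wock$ with $\+P_{e,s}^\tau\cap[X_{e,\alpha}\cat b]=\emptyset$. Using the convergences $\xi_u\to\xi$ and $A_u\to A$ guaranteed by \ref{Univ:ConstClaim:left-c.e.}, together with \cref{lem:the_omega_plus_one_outcome}, fix $t\ge s$ so late that for every $u\ge t$ we have (i)~$\tau\prec A_u$, (ii)~$\xi_u\rest{(|\alpha|+1)}=\xi\rest{(|\alpha|+1)}$, and (iii)~$\alpha_{e,u}=\alpha_e$ with $\alpha_e\cat(\w+1)\nprec\xi_u$. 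At every such $u$, $\+P^{A_u}_{e,u}\subseteq\+P^\tau_{e,s}$ is disjoint from $[X_{e,\alpha}\cat b]$, so $\alpha$ has a problem of type~(1) or~(2) at stage~$u$. I then need to show that the construction eventually acts on $\alpha$ (or some strategy $\preceq\alpha$), forcing a change of $\xi_{u+1}$ at a position $\le|\alpha|$ and contradicting~(ii). Any direct action on $\alpha$, or any action on a shorter strategy, has this effect; the only remaining possibility is a longer case~(3) $e'$-strategy $\tilde\alpha\succ\alpha\cat o$ with $\tilde\alpha\cat(\w+1)\prec\xi_u$. Monotonicity of the outcomes in $\xi_u$ then forces $\tilde\alpha\cat(\w+1)\prec\xi$, whence \cref{lem:the_omega_plus_one_outcome} forces $e'\ne e$; admissibility of~$\wock$ in turn bounds the number of such interruptions, so at some sufficiently late stage $\alpha$'s problem must be addressed directly.

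Finally, since $d\mapsto\seq{e,d}$ is unbounded, there are infinitely many $e$-strategies along $\xi$, and the $X_{e,\alpha}$ form an $\w$-chain of initial segments whose union is $X_e$. Each cylinder $[X_{e,\alpha}]$ meets the closed set $\+P_e^A$, and compactness then yields $X_e\in\+P_e^A$, completing the proof. The main delicate point is the scheduling argument in the inductive step: one must rule out indefinite deferral of $\alpha$'s problem by longer case~(3) interruptions, which is handled via the monotonicity of outcomes in $(\w+2)^{<\w}$ combined with admissibility.
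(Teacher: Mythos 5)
Your overall strategy matches the paper's: show that for each $e$-strategy $\alpha\prec\xi$ with outcome $o\le\w$, the cylinder $[X_{e,\alpha\cat o}]$ meets $\+P_e^A$, and conclude by closure of $\+P_e^A$. (You cast this as an induction, but your inductive step never uses its hypothesis, so it is really a direct argument for each $\alpha$, as in the paper.) The problem is with your justification of the key scheduling step, namely that the construction must eventually act at a position $\le|\alpha|$. You assert ``Monotonicity of the outcomes in $\xi_u$ then forces $\tilde\alpha\cat(\w+1)\prec\xi$''. This is false: when the construction acts on a case-(3) problem at $\tilde\alpha$, it routes $\xi_{u+1}$ to the right of the previous $e'$-strategy $\beta\prec\tilde\alpha$, so $\tilde\alpha\nprec\xi_{u+1}$ and, by \ref{Univ:ConstClaim:left-c.e.}, $\tilde\alpha$ never returns to the path; thus $\tilde\alpha\cat(\w+1)\nprec\xi$. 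The subsequent appeal to \cref{lem:the_omega_plus_one_outcome} and to admissibility therefore has no footing.

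The tool that actually finishes the argument is a finite descent, not admissibility. Suppose at stage $u\ge s_0$ the action is at position $n_u>|\alpha|$ on the (by \cref{lem:the_missing_lemma}, unique) case-(3) problem; the action increments $\xi_u(n_u)$ and zeroes all higher positions. If the new value is $<\w+1$, then at stage $u+1$ there is no problematic case-(3) strategy at any position $>|\alpha|$: positions $>n_u$ hold $0$, and a strategy at a position in $(|\alpha|,n_u)$ whose outcome is $\w+1$ and which has a problem at stage $u+1$ would already have had a problem at stage $u$, contradicting uniqueness. So at stage $u+1$ the construction acts at position $\le|\alpha|$, contradiction. If the new value is $\w+1$, the only possible case-(3) problem at stage $u+1$ sits at position $n_u$, and acting on it moves to a strictly smaller position. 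Iterating, the positions strictly decrease, so after finitely many steps we either act at position $\le|\alpha|$ or exhaust the case-(3) problems above $|\alpha|$; either way, a contradiction. (The paper's own proof also glosses over this point -- the clause ``which would prompt us to move away from $\alpha\cat k$'' silently asserts the conclusion of this descent -- so you were right to flag it, but the mechanism you offered does not do the job.)
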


\begin{proof}
	By \cref{lem:the_omega_plus_one_outcome}, for every $e$-strategy $\alpha\prec \xi$, $\alpha$'s outcome $o$ (that is, $\alpha\cat o\prec \xi$) satisfies $o\le \w$. 

	If $o = k<\w$ then by our construction, $\+P_e^A\cap [X_{e,\alpha}\cat 0]\ne \emptyset$: otherwise, for some $\s\prec A$ we have $\+P_e^\s\cap [X_{e,\alpha}\cat 0] = \emptyset$ and eventually $\s\prec A_s$, which would prompt us to move away from~$\alpha\cat k$. 

	If $o=\w$ then similarly, $\+P_e^A\cap [X_{e,\alpha}\cat 1]\ne\emptyset$. In either case, $\+P_e^A\cap [X_{e,\alpha}]\ne \emptyset$. Further, $X_e = \bigcup \left\{ X_{e,\alpha} \,:\,  \alpha\prec \xi \text{ is an $e$-strategy} \right\}$, so the result follows by closure.
\end{proof}

\subsubsection{The noise canceling lemma}

Toward capturing $X_e$ in a higher $A$-ML test, we prove a ``noise cancelling'' lemma. For brevity, let~$\Access$ be the collection of strategies which are ever accessible:
\[
	\Access = \left\{ \alpha\in (\w+2)^{<\w} \,:\,  (\exists s<\wock) \,\,\alpha\prec \xi_s \right\}.
\]

\begin{lemma} \label{lemma:universal_test_main_lemma}
Let~$\alpha$ be a strategy, and let $o\le \w$. Suppose that:
\begin{enumerate}
	\item $\alpha\cat o\in \Access$; and 
	\item $\alpha\cat(\w+1)\notin \Access$. 
\end{enumerate}
Then for all $k<o$, $A\nsucc \stem(T_{\alpha \cat k})$. 
\end{lemma}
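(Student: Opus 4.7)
I would split into two cases based on whether $\alpha$ lies on the true path $\xi$. In the easy case $\alpha \prec \xi$, let $o^* \leq \w + 1$ be the outcome such that $\alpha \cat o^* \prec \xi$. Hypothesis~(2) rules out $o^* = \w + 1$, so $o^* \leq \w$, and the monotonicity of $\seq{\xi_s}$ from Construction Claim~1, combined with hypothesis~(1), forces $o \leq o^*$. Then for $k < o \leq o^* \leq \w$ we have $k < \w$, so $\stem(T_{\alpha \cat k}) = \s_k(T_\alpha)$. If $o^* < \w$, then $A$ extends $\s_{o^*}(T_\alpha)$, which is incomparable with $\s_k(T_\alpha)$. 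If $o^* = \w$, then $A \in [\Nar(T_\alpha)]$, and since $\s_k(T_\alpha) \notin \Nar(T_\alpha)$ with no path of $\Nar(T_\alpha)$ extending it, $A \nsucc \s_k(T_\alpha)$.

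In the harder case $\alpha \nprec \xi$, let $n$ be the first index of divergence between $\alpha$ and $\xi$ and set $\gamma = \alpha \rest n$, $p = \alpha(n)$, $q = \xi(n)$; monotonicity of $\seq{\xi_s}$ forces $p < q$. Since $T_\alpha \subseteq T_{\gamma \cat p}$, $A \in [T_{\gamma \cat q}]$, and the construction has been set up so that $[T_{\gamma \cat p}]$ and $[T_{\gamma \cat q}]$ are disjoint, the task is to locate $\stem(T_{\alpha \cat k}) \succeq \stem(T_\alpha) \succeq \s_p(T_\gamma)$ relative to $A$. For $q < \w$, $A$ extends $\s_q(T_\gamma) \perp \s_p(T_\gamma)$, giving incomparability. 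For $q = \w$, $A \in [\Nar(T_\gamma)]$ while $\s_p(T_\gamma) \notin \Nar(T_\gamma)$, again precluding $A \succ \stem(T_{\alpha \cat k})$.

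The main obstacle is the remaining sub-case $q = \w + 1$, where $T_{\gamma \cat (\w+1)} = T_\gamma \dbrak{k_0} \dbrak{m_0}$, with $\s_{m_0}(T_\gamma \dbrak{k_0})$ branching off the earlier approximation $A_t$ through the $1$-branch of an odd-level branching point $\tau_0$, while $\Nar(T_\gamma \dbrak{k_0})$ takes the corresponding $0$-branch. When $p \neq k_0$, the argument reduces to the previous paragraph applied to $k_0$ and $p$ at the level of $T_\gamma$. The delicate sub-sub-case is $p = k_0$: here both $\stem(T_{\alpha \cat k})$ and $\s_{m_0}(T_\gamma \dbrak{k_0})$ extend $\s_{k_0}(T_\gamma)$, but the former lies in $\Nar(T_\gamma \dbrak{k_0})$ while the latter does not. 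The plan is to show that $\stem(T_{\alpha \cat k})$ either lies in the $0$-subtree issuing from $\tau_0$, in which case it is incomparable with $A$, or sits at or below $\tau_0$; the latter possibility is then excluded by leveraging hypothesis~(2) together with the rule governing when the construction of $T_{\gamma \cat (\w+1)}$ selects its branching point, ensuring that $m_0$ is chosen past any accessible stem extending $\gamma \cat k_0$ that could have prompted a returning $(\w+1)$-action at $\alpha$.
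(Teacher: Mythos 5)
Your Case~1 ($\alpha\prec\xi$) and the sub-cases $q<\w$ and $q=\w$ of Case~2 are sound and match what the paper does. The problem is the sub-case $q=\w+1$, which is where all the real work lies, and your treatment of it has a gap.

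First, a concrete error: you say ``when $p\neq k_0$, the argument reduces to the previous paragraph.'' But if $p=\w$ (so $\gamma\cat\w\preceq\alpha$) this reduction fails. There, $\stem(T_{\alpha\cat k})\in\Nar(T_\gamma)$ while $A\succ\s_{k_0}(T_\gamma)\notin\Nar(T_\gamma)$; but $A$ does pass through an initial portion of $\Nar(T_\gamma)$ before it leaves at the branching node below $\s_{k_0}(T_\gamma)$, so $\stem(T_{\alpha\cat k})$ could still be a prefix of $A$. The roles here are the reverse of the $q=\w$ case and do not give incomparability for free. This $p=\w$ situation is in fact one of the two genuinely delicate sub-cases.

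More fundamentally, your proposal tries to argue directly about the limit $A$, whereas the construction's choice of $k_0$ and $m_0$ when it defined $T_{\gamma\cat(\w+1)}$ was made with reference to the \emph{earlier} approximations $A_r$ (when $\gamma\cat\w$ was last accessible) and $A_w$ (when $\gamma\cat k_0$ was last accessible), not to $A$. The guarantee the construction gives is of the form ``every $\rho\in\Nar(T_\gamma)$ with $\rho\prec\s_{k_0}(T_\gamma)$ satisfies $\rho\prec A_r$'' (and analogously for $m_0$ and $A_w$). To rule out $\stem(T_{\alpha\cat k})\prec\s_{k_0}(T_\gamma)$ (respectively $\prec\s_{m_0}(T_\gamma\dbrak{k_0})$), you therefore need to know that $\stem(T_{\alpha\cat k})$ is \emph{already} incomparable with $A_r$ (respectively $A_w$). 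That fact does not follow from hypothesis~(2) alone, and it is not available in your set-up because $\alpha$ need not be accessible at stages $r$ or $w$; the current path $\xi_r$ can have already moved strictly between $\gamma\cat p$ and $\alpha$, and then the first divergence point $\alpha\wedge\xi_r$ is a proper extension of your fixed $\gamma$. The paper resolves this by strengthening the statement: it proves $A_s\nsucc\stem(T_{\alpha\cat k})$ for \emph{every} stage $s\ge t$ (where $t$ is any stage with $\alpha\cat o\prec\xi_t$), by induction on~$s$, taking $\beta=\alpha\wedge\xi_s$ at each stage. The induction hypothesis then supplies exactly the missing fact $\stem(T_{\alpha\cat k})\nprec A_r$ and $\stem(T_{\alpha\cat k})\nprec A_w$ needed in the $q=\w+1$ analysis. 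Your ``leveraging hypothesis~(2) $\ldots$ ensuring $m_0$ is chosen past any accessible stem extending $\gamma\cat k_0$'' is not something the construction actually guarantees, and I don't see how to recover it without the stage-wise induction. You should restructure the argument as an induction over stages $s\ge t$ on the claim $A_s\nsucc\stem(T_{\alpha\cat k})$, working at each stage with $\beta=\alpha\wedge\xi_s$ rather than with the fixed $\gamma=\alpha\wedge\xi$.
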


\begin{proof}
First let us emphasize that the only ``bad'' case is that when for some $m<n$, the bit $\xi(m)$ takes the value $\omega+1$ at some stage; this will be observed in the proof. 

\smallskip

Let~$t$ be a stage such that $\alpha\cat o\prec \xi_t$, and let $k<o$. We prove by induction on $s\ge t$ that $A_s\nsucc \stem(T_{\alpha\cat k}) = \s_k(T_\alpha)$. 

\medskip

This holds for~$s=t$: either $o<\w$, in which case $\s_o(T_\alpha)\prec A_t$, and $\s_o(T_\alpha)\perp \s_k(T_\alpha)$; or $o=\w$, in which case $A_t\in \Nar(T_\alpha)$ and $\s_k(T_\alpha)\notin \Nar(T_\alpha)$ (see \cref{fact:small_tool1}). In fact, since $\alpha\cat (\w+1)$ is never accessible, this argument shows that for all $s\ge t$, if $\alpha\prec \xi_s$ then $\s_k(T_\alpha)\nprec A_s$. 

\medskip

Let $s>t$, and suppose that for all $r\in [t,s)$, $\s_k(T_\alpha)\nprec A_r$. We show that $\s_k(T_\alpha)\nprec A_s$. 

\smallskip

Let $\beta = \alpha\wedge \xi_s$ be the longest common initial segment of~$\alpha$ and~$\xi_s$. By the argument above, we may assume that $\beta\ne \alpha$. Let~$p<q\le \w+1$ such that $\beta\cat p\preceq \alpha$ and $\beta\cat q\prec \xi_s$. 

\smallskip

Suppose that $q\le \w$; so $p<\w$. In this case we have $\s_k(T_\alpha)\succ \s_p(T_\beta)$. As above, if $q<\w$  then $\s_q(T_\beta)\prec A_s$ (and $\s_p(T_\beta)$ and $\s_q(T_\beta)$ are incomparable); if $q=\w$ then $A_s\in \Nar(T_\beta)$ so again $\s_p(T_\beta)\nprec A_s$. 

So as discussed, the interesting case is when $q = \w+1$. So $p\le \w$. Let $c,d<\w$ be such that $T_{\beta\cat(\w+1)} = T_\beta\dbrak{c}\dbrak{d}$, and let~$r$ be the least stage such that $\beta\cat(\w+1)\prec \xi_{r+1}$. So $r<s$ but $r\ge t$. Of course $\stem(T_{\beta\cat(\w+1)})\prec A_s$; so we show that $\stem(T_{\beta\cat(\w+1)})\perp \s_k(T_\alpha)$. 

\medskip

There are two cases. First, suppose that $p=\w$, that is, $\beta\cat \w\preceq \alpha$; so $T_\alpha\succeq \Nar(T_\beta)$. At stage~$r+1$, we choose~$c$ such that $\s_c(T_\beta)$ branches off $\Nar(T_\beta)$ along $A_r$; namely, for all $\rho\in \Nar(T_\beta)$, $\rho \prec \s_c(T_\beta)\then \rho\prec A_r$. Apply this to $\rho = \s_k(T_\alpha)$: by induction, as $r\ge t$, we know that $\rho\nprec A_r$; so $\s_k(T_\alpha)\nprec \s_c(T_\beta)$. Also $\s_k(T_\alpha)\nsucceq \s_c(T_\beta)$ as the latter is not on $\Nar(T_\beta)$. Hence $\s_k(T_\alpha)\perp \s_c(T_\beta)$; and $\s_c(T_\beta)\prec \stem(T_{\beta\cat(w+1)})$. 

\smallskip

Next, suppose that $p<\w$. The only case we worry about is $p=c$; otherwise, $\s_k(T_\alpha)\succ \s_p(T_\beta)$ which is incomparable with $\s_c(T_\beta)$. So suppose that $p=c$. Then $T_\alpha\succeq T_{\beta\cat c} = \Nar(T_\beta\dbrak{c})$. Let~$w$ be the last stage at which $\beta\cat c\prec \xi_w$. Since~$\alpha$ extends $\beta\cat c$, it follows that the last stage at which~$\alpha$ is accessible is $\le w$. We conclude that $w\ge t$, and so that $\s_k(T_\alpha)\perp A_w$. Now we follow a similar argument: At stage~$r+1$ we choose~$d$ so that $\s_d(T_\beta\dbrak{c})$ branches off~$A_w$, so that for all $\rho\in \Nar(T_\beta\dbrak{c})$, if $\rho\prec \s_d(T_\beta\dbrak{c})$ then $\rho\prec A_w$. Applying this to $\rho = \s_k(T_\alpha)$, we conclude that $\s_k(T_\alpha)\nprec \s_d(T_\beta\dbrak{c}) = \stem(T_{\beta\cat(\w+1)})$. And $\s_k(T_\alpha)\in \Nar(T_\beta\dbrak{c})$, whereas $\s_d(T_\beta\dbrak{c})\notin \Nar(T_\beta\dbrak{c})$, so $\s_k(T_\alpha)\nsucceq \stem(T_{\beta\cat(\w+1)})$; overall we conclude that $\s_k(T_\alpha)\perp \stem(T_{\beta\cat(\w+1)})$ as required. 
\end{proof}

\subsubsection{The Martin-L\"of test}

Fix some $e<\w$ such that $\+P_e^A\ne\emptyset$; recall that we let $\alpha_e = \xi\rest{\seq{e,0}}$ be the shortest $e$-strategy $\prec \xi$. We show how to enumerate a higher $A$-ML test capturing~$X_e$. Note that what follows is not uniform in~$e$; the extra bit of information is the identity of~$\alpha_e$. 

\begin{lemma} \label{lemma:univ_test_help}
Let $\alpha\succeq \alpha_e$ be an $e$-strategy. Suppose that $\stem(T_\alpha)\prec A$; then $\alpha\cat(\w+1)\notin \Access$. 
\end{lemma}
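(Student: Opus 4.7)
The plan is to argue by contradiction, using strong induction on $|\alpha|$ among $e$-strategies with $\alpha\succeq\alpha_e$ and $\stem(T_\alpha)\prec A$. Suppose $\alpha\cat(\w+1)\in\Access$. First I would dispose of the sub-case $\alpha\cat(\w+1)\prec\xi$: \cref{lem:the_omega_plus_one_outcome} would then force $\+P_e^A=\emptyset$, contradicting the standing assumption of this subsection. So $\alpha\cat(\w+1)\nprec\xi$, and since $\seq{\xi_s}$ is non-decreasing lexicographically with $\alpha\cat(\w+1)\prec\xi_s$ for some~$s$, the disagreement between $\xi$ and $\alpha\cat(\w+1)\cat 0^\infty$ must occur strictly before position~$|\alpha|$; hence $\alpha\nprec\xi$. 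Set $\beta=\alpha\wedge\xi$, $p=\alpha(|\beta|)$, $q=\xi(|\beta|)$, so $p<q\le\w+1$.

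For $q\le\w$, the hypotheses $A\in[T_{\beta\cat q}]$ and $\stem(T_\alpha)\succeq\stem(T_{\beta\cat p})\prec A$ yield a purely structural contradiction: when $p<\w$, the stems $\s_p(T_\beta)$ and $\s_q(T_\beta)$ are incomparable nodes of $T_\beta$ that would both have to be prefixes of~$A$; when $q=\w$ (forcing $p<\w$), \cref{fact:small_tool1} applied to $T_\alpha\subseteq T_\beta\dbrak{p}$ gives $\stem(T_\alpha)\npreceq\s$ for any $\s\in\Nar(T_\beta)$, which is incompatible with $A\in[\Nar(T_\beta)]$. For $q=\w+1$, $\beta\cat(\w+1)\prec\xi$ forces $\beta$ to be some $e'$-strategy (since only such strategies receive $(\w+1)$-outcomes), and \cref{lem:the_omega_plus_one_outcome} gives $\+P_{e'}^A=\emptyset$; the sub-case $e'=e$ is then the desired contradiction. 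Otherwise, writing $T_{\beta\cat(\w+1)}=T_\beta\dbrak{k}\dbrak{m}$ gives $A\succeq\s_m(T_\beta\dbrak{k})\succeq\s_k(T_\beta)$, and the sub-case $p\ne k$ again forces incomparable nodes of~$T_\beta$ to be simultaneous prefixes of~$A$.

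The residual sub-case $p=k$ with $e'\ne e$ (so $\alpha\succeq\beta\cat k$) is the crux. Here I would trace the dynamics of how $\xi$ escapes from $\alpha\cat(\w+1)$: some Case~(3) firing at a stage~$t$ for a strategy $\alpha''$ with $\alpha''\cat(\w+1)\prec\xi_t$ is responsible for the displacement. Since changes at positions $>|\alpha|$ do not affect whether $\alpha\cat(\w+1)\prec\xi$, and since $\alpha''\succ\alpha$ would force $\beta_{\alpha''}=\alpha$ and the invalid outcome $o=\w+1\to\w+2$ in Case~(3), one concludes $\alpha''\preceq\alpha$. If $\alpha''\prec\alpha$ strictly and $\alpha''$ is itself an $e$-strategy, the inductive hypothesis applied to $\alpha''$ (which extends $\alpha_e$ and has $\stem(T_{\alpha''})\preceq\stem(T_\alpha)\prec A$) contradicts $\alpha''\cat(\w+1)\in\Access$. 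So either $\alpha''=\alpha$, in which case Case~(3) for $\alpha$ installs $\beta_\alpha\cat(o+1)\prec\xi_{t+1}$ with $\beta_\alpha$ the longest $e$-strategy strictly below~$\alpha$: the outcome $o=\w$ gives $\beta_\alpha\cat(\w+1)\in\Access$ and induction applied to~$\beta_\alpha$ produces contradiction, while $o<\w$ reduces to the already-handled case $q\le\w$ with $\beta_\alpha$ in place of~$\beta$; or $\alpha''\prec\alpha$ is an $e''$-strategy for some $e''\ne e$.

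The main obstacle is the last scenario, $\alpha''\prec\alpha$ with $\alpha''$ an $e''$-strategy for $e''\ne e$, where induction on $e$-strategies does not directly apply. The strategy is to iterate: if $\alpha''\cat(\w+1)\prec\xi$ in the limit then \cref{lem:the_omega_plus_one_outcome} gives $\+P_{e''}^A=\emptyset$ and one can use the structural constraints (the specific form of $T_{\alpha''\cat(\w+1)}=T_{\alpha''}\dbrak{k''}\dbrak{m''}$ together with $\stem(T_\alpha)\prec A$) to propagate a contradiction back to the $e$-level; if $\alpha''\cat(\w+1)\nprec\xi$ then some later firing displaces $\alpha''\cat(\w+1)$ as well, and this cascade must terminate by admissibility of~$\wock$ applied to the lengths of the strategies involved, eventually forcing a configuration already ruled out by the earlier cases or by \cref{lem:the_omega_plus_one_outcome} at level~$e$.
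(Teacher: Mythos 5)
Your overall frame — induction on $e$-strategies, dispose of $\alpha\cat(\w+1)\prec\xi$ via \cref{lem:the_omega_plus_one_outcome}, then analyse how $\xi$ escapes $\alpha\cat(\w+1)$ — is plausible, but the argument has a genuine gap exactly where the difficulty lies, and the gap stems from not invoking the two tools the construction was engineered to provide.

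First, you never use \cref{lem:the_missing_lemma}. That lemma is what pins down \emph{the} single stage~$s$ and \emph{the} single strategy acted on when $\alpha\cat(\w+1)\prec\xi_s$ and $\alpha$ has a problem: by priority of case~(3) and uniqueness, the construction acts on $\alpha$ itself at stage~$s$, installing $\beta\cat(o+1)\prec\xi_{s+1}$ for the previous $e$-strategy~$\beta$. Your argument instead works with the limit object $\beta=\alpha\wedge\xi$ and the limiting outcome $q=\xi(|\beta|)$, and then has to re-trace ``the dynamics of how $\xi$ escapes from $\alpha\cat(\w+1)$'' by hand — a cascade of candidate firings at various positions — which is far more complicated than needed and, as written, not a proof. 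Phrases like ``propagate a contradiction back to the $e$-level'' and ``this cascade must terminate by admissibility'' do not resolve the residual sub-case ($p=k$, $e'\ne e$); that is precisely where the real work is.

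Second, and more importantly, you never use \cref{lemma:universal_test_main_lemma} (the noise-cancelling lemma), which is the technical heart of the section. The paper's proof of the present lemma is short \emph{because} it leans on that lemma: having shown $\beta\cat(o+1)\in\Access$ and (by induction) $\beta\cat(\w+1)\notin\Access$ with $o+1\le\w$, one applies \cref{lemma:universal_test_main_lemma} to get $\stem(T_{\beta\cat o})\nprec A$, contradicting $\stem(T_\alpha)\succeq\stem(T_{\beta\cat o})\prec A$. Your residual sub-case would need exactly that conclusion — that the $(\w+1)$-outcome at $\beta$, when it occurs, branches off the historical approximations $A_r$ and so avoids $\stem(T_\alpha)$ — but establishing this requires the careful induction on stages (tracking $A_r$ and the auxiliary stage~$w$) that constitutes the proof of \cref{lemma:universal_test_main_lemma}. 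You cannot get it from ``structural constraints'' alone: the choices of $k''$ and $m''$ in case~(3) are made dynamically to match the history of $A_s$, and without the noise-cancelling lemma there is nothing preventing $\stem(T_{\alpha''\cat(\w+1)})$ from being an initial segment of~$A$ that passes through $\stem(T_\alpha)$. In short: you have reconstructed the easy cases correctly, but the hard case is exactly the content of \cref{lemma:universal_test_main_lemma}, which is missing from your proposal and cannot be waved away.

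Minor additional issues: in the $q=\w+1$, $p\ne k$ branch you claim ``incomparable nodes of $T_\beta$ become prefixes of $A$,'' but this reasoning only applies when $p<\w$; the sub-case $p=\w$ is not addressed (though it could be, via \cref{fact:small_tool1}). And your deduction that ``$\alpha''\succ\alpha$ would force $\beta_{\alpha''}=\alpha$'' is incorrect — $\alpha$ is an $e$-strategy, so it need not be the longest $e''$-strategy below $\alpha''$ when $e''\ne e$ — though this particular slip is repairable.

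Recommendation: invoke \cref{lem:the_missing_lemma} to localise the escape to a single case-(3) firing on $\alpha$ at the stage it first appears, then invoke \cref{lemma:universal_test_main_lemma} on the previous $e$-strategy~$\beta$ and the outcome $o+1$. That collapses the entire case analysis.
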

\begin{proof}
By induction on the length of~$\alpha$. We know that $\alpha_e\cat (\w+1)\nprec \xi$ (\cref{lem:the_omega_plus_one_outcome}), and as the approximation $\seq{\xi_s}$ is non-decreasing, this means that $\alpha_e\cat(\w+1)\notin \Access$. 

Suppose this has been verified for all $e$-strategies $\beta\prec \alpha$; suppose that $\stem(T_\alpha)\prec A$. Suppose, for a contradiction, that $\alpha\cat(\w+1)\in \Access$; let $s$ be the stage at which $\alpha\cat(\w+1)\prec \xi_s$. By \cref{lem:the_missing_lemma}, at stage~$s$ we act for~$\alpha$: let $\beta$ be the previous $e$-strategy below~$\alpha$; at stage~$s$ we let $\beta\cat (o+1)\prec \xi_{s+1}$, while $\beta\cat o\prec \xi_s$. 
As $\stem(T_\beta)\preceq \stem(T_\alpha)\prec A$, by induction, $\beta\cat (\w+1)\notin \Access$, so $o\ne \w$, so $o<\w$. Now \cref{lemma:universal_test_main_lemma} applies to~$\beta$ and to $o+1$, which shows that $\stem(T_{\beta\cat o})\nprec A$; but $\stem(T_\alpha)\succeq \stem(T_{\beta\cat o})$, which is a contradiction.
\end{proof}

We can then deduce:

\begin{lemma} \label{lemma:final_cor_univ_test}
For any $e$-strategy $\alpha \succeq \alpha_e$ there is at most one outcome $o \leq \omega+1$ such that $\alpha\cat o\in \Access$ and $\stem(T_{\alpha\cat o})\prec A$. 
\end{lemma}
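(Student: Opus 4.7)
\medskip

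\textbf{Proof proposal.} The plan is to argue by contradiction: suppose there were two distinct outcomes $o_1 < o_2 \le \w+1$ with $\alpha\cat o_1,\alpha\cat o_2 \in \Access$ and $\stem(T_{\alpha\cat o_i})\prec A$ for $i=1,2$. The goal is to derive a contradiction by first ruling out $o_2 = \w+1$ using \cref{lemma:univ_test_help}, and then using \cref{lemma:universal_test_main_lemma} to eliminate all remaining possibilities.

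First I would dispose of the outcome $\w+1$. Since $\alpha \succeq \alpha_e$ and $\stem(T_\alpha) \preceq \stem(T_{\alpha\cat o_i}) \prec A$ (this inclusion is by construction of $T_{\alpha\cat o}$ as either $\Nar(T_\alpha\dbrak{k})$, $\Nar(T_\alpha)$, or $T_\alpha\dbrak{k}\dbrak{m}$, in each case a subtree whose stem extends $\stem(T_\alpha)$), \cref{lemma:univ_test_help} applies and gives $\alpha\cat(\w+1)\notin \Access$. In particular $o_2 \ne \w+1$, so $o_2 \le \w$, and therefore $o_1 < \w$.

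Now both hypotheses of \cref{lemma:universal_test_main_lemma} hold for the pair $(\alpha, o_2)$: $\alpha\cat o_2\in \Access$ and $\alpha\cat(\w+1)\notin \Access$. The lemma yields $A\nsucc \stem(T_{\alpha\cat k})$ for every $k < o_2$. Applying this with $k = o_1$ contradicts the assumption $\stem(T_{\alpha\cat o_1})\prec A$, completing the proof.

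The argument is essentially a direct combination of the two previous lemmas, so the only step that requires any care is the initial observation that the stem of $T_{\alpha\cat o}$ always properly extends the stem of $T_\alpha$ -- this is needed to pass from ``$\stem(T_{\alpha\cat o})\prec A$'' to ``$\stem(T_\alpha)\prec A$'' so that \cref{lemma:univ_test_help} is applicable. This is the only place where one must verify something about the mapping $\alpha \mapsto T_\alpha$ rather than simply quoting the preceding lemmas; the rest of the deduction is formal.
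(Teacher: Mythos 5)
Your proof is correct and takes the same route as the paper, which also just reduces to the case $\stem(T_\alpha)\prec A$ and then combines \cref{lemma:univ_test_help} with \cref{lemma:universal_test_main_lemma}. One minor slip in your closing paragraph: $\stem(T_{\alpha\cat o})$ does not always \emph{properly} extend $\stem(T_\alpha)$ --- for $o=\w$ we have $\stem(T_{\alpha\cat\w})=\stem(\Nar(T_\alpha))=\stem(T_\alpha)$ --- but this is harmless, since what the deduction actually needs (and what you correctly use in the body of the argument) is only the non-strict containment $\stem(T_\alpha)\preceq\stem(T_{\alpha\cat o})$.
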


\begin{proof}
	We only need worry if $\stem(T_\alpha)\prec A$. The lemma then follows from combining \cref{lemma:univ_test_help} and \cref{lemma:universal_test_main_lemma}. 
\end{proof}

\begin{lemma} \label{lemma:univ_test_at_most_2n_strat}
For any $n$, there are at most $3^n$ many strategies $\alpha \in \Access$ of length~$n$ such that $\stem(T_\alpha) \prec A$.
\end{lemma}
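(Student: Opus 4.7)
The plan is to proceed by induction on $n$. The base case is immediate: the empty strategy is the only length-$0$ strategy, it lies in $\Access$ (accessed at stage $0$), and $\stem(T_\epsilon)=\emptystring\prec A$, so the count is $1=3^0$.

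For the inductive step, I would consider the predecessor map $\alpha\mapsto \alpha\rest{n}$ from the set of length-$(n+1)$ accessible strategies with $\stem(T_\alpha)\prec A$ to length-$n$ strategies. First I want to check that this map lands in the set counted by the induction hypothesis: accessibility descends to initial segments, so I only need $\stem(T_{\alpha\rest n})\prec A$. This follows from the monotonicity of $\stem$ along extensions in the tree of trees, verified case by case from the definitions: for an outcome $k<\w$, $T_{\beta\cat k}=\Nar(T_\beta\dbrak{k})$ has stem $\s_k(T_\beta)\succeq \stem(T_\beta)$; for $\w$, $T_{\beta\cat\w}=\Nar(T_\beta)$ has the same stem as $T_\beta$; and for $\w+1$, $T_{\beta\cat(\w+1)}=T_\beta\dbrak{k}\dbrak{m}$ has stem $\s_m(T_\beta\dbrak{k})\succeq \stem(T_\beta)$. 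So in all three cases $\stem(T_\alpha)\prec A$ implies $\stem(T_{\alpha\rest n})\prec A$.

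It then suffices to bound the fibers by $3$. Fix $\alpha\in\Access$ of length $n$ with $\stem(T_\alpha)\prec A$, and count outcomes $o\le \w+1$ with $\alpha\cat o\in\Access$ and $\stem(T_{\alpha\cat o})\prec A$. The outcome $\w$ contributes at most one candidate. The outcome $\w+1$ also contributes at most one, since once $\alpha\cat(\w+1)$ first becomes accessible we have fixed a single pair $(k,m)$ and hence a single extended stem $\s_m(T_\alpha\dbrak{k})$. Finally, the stems $\s_k(T_\alpha)$ for distinct $k<\w$ are pairwise incomparable (as distinct minimal odd-level branchings on $T_\alpha$), so at most one outcome $k<\w$ can yield $\s_k(T_\alpha)\prec A$. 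Thus at most three outcomes per length-$n$ strategy qualify, and the count at length $n+1$ is bounded by $3\cdot 3^n=3^{n+1}$.

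There is no genuine obstacle here; the argument is a structural induction and in particular does not invoke the noise-cancelling lemmas. The crude per-level bound of $3$ is wasteful — on the long stretches of consecutive $e$-strategies above $\alpha_e$, \cref{lemma:final_cor_univ_test} sharpens it to $1$ — but the uniform bound $3^n$ is all that is needed for this lemma, and the sharper bound is what will be invoked subsequently to cover $X_e$ by a higher $A$-ML test of small measure.
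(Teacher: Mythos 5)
Your proof is correct and follows essentially the same approach as the paper's, which compresses it into a single sentence observing that from any $\alpha$ with $\stem(T_\alpha)\prec A$, the outcomes $\w$ and $\w+1$ each contribute at most one child with stem $\prec A$, and at most one $k<\w$ does (since the $\s_k(T_\alpha)$ are pairwise incomparable). You have simply made the base case, the induction, and the descent argument $\alpha\mapsto\alpha\rest{n}$ explicit.
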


\begin{proof}
Given some $\alpha$ such that $\stem(T_\alpha) \prec A$, we may have $\stem(T_{\alpha \cat \omega}) \prec A$, $\stem(T_{\alpha \cat (\omega + 1)}) \prec A$, but $\stem(T_{\alpha \cat k}) \prec A$ for at most a single~$k$. 
\end{proof}

Recall now that the pairing function we used relied on a partition of~$\w$ into blocks $[4^n,4^{n+1})$. For each such interval~$I$ we have $|I| = 3\min I$. If~$I$ is an $e$-block, and $m = \min I$, let 
\[
	C_I = \left\{ \alpha \,:\,  \alpha\succeq \alpha_e, |\alpha| = 4m, \alpha\in \Access \andd \stem(T_\alpha)\prec A \right\}.
\]
Then by \cref{lemma:univ_test_at_most_2n_strat} and \cref{lemma:final_cor_univ_test}, $|C_I|\le 3^m$. For each $\alpha\in C_I$, we have $|X_{e,\alpha}|\ge 3m$. Hence, letting 
\[
	\+U_I = \bigcup \left\{ [X_{e,\alpha}] \,:\, \alpha\in C_I  \right\},
\]
we have
\[
	\leb \left( \+U_I  \right) \le 3^m 2^{-3m} < 2^{-m}.
\]
Also, $\+U_I$ is higher $A$-c.e.\ open, uniformly in~$I$, and $X_e\in \+U_I$. Thus, to define a higher $A$-ML-test capturing~$X_e$, for each~$n$, we find an $e$-block~$I$ with $\min I\ge n$, and let $\+V_n = \+U_I$. We conclude that $X_e$ is not higher $A$-ML random. 

\subsubsection{$\omega_1^A > \wock$}

This is the last thing that remains to be proved: $\omega_1^A > \wock$. It is enough to see that $A$ hyperarithmetically computes $\xi$, because $\seq{\xi_s}$ is a collapsing approximation of~$\xi$ (or else $A$ is hyperarithmetic). Note that the computation is is not continuous, since we need to check every prefix of $A$ to know where it lies in the tree of trees and thus retrive $\xi$. So in fact, $\xi$ is higher Turing computable from $A'$ (the usual Turing jump of~$A$). Also as $\xi$ is higher left-c.e.\ and not hyperarithmetic, we have $\omega_1^\xi > \wock$ and thus $\omega_1^A > \wock$.

\section{Summary and questions}

We sum up here most of what is known about the three classes of bad oracles studied in this paper.

\renewcommand*{\arraystretch}{1.5}

\vspace{\baselineskip}

\begin{tabularx}{\textwidth}{X|X|X|X}
Bad oracles for  						&ML-random  		&$\geq_{\wock} O$ 					& $\geq_h O$						\\ \hline \hline
\multirow{2}{*}{Uniform self-PA}		&may be $^1$ /		& \multirow{2}{*}{must be $^3$}		& \multirow{2}{*}{must be $^4$} 	\\ 
										&may not be $^2$	&									& 									\\\hline
\multirow{2}{*}{Turing functionals} 	&may be $^5$/		& {may be $^7$} /					& \multirow{2}{*}{must be $^8$}		\\
										&may not be $^6$	&	may not be?						&									\\ \hline
\multirow{2}{*}{Universal ML-test} 		&\multirow{2}{*}{cannot be $^9$} 	&\multirow{2}{*}{cannot be$^{10}$} 	&may be $^{11}$ / 	\\
										& 					& 				& may not be ? \\
\end{tabularx}

\renewcommand*{\arraystretch}{1}

\vspace{\baselineskip}

\begin{enumerate}
\item By \cref{thm:bad_random_for_self_PA}. 
\item It is easy to ensure that the direct construction (\cref{thm:existence_of_bad_for_uniform_higher_self_PA}) produces a non-random sequence. 
\item By \cref{prop:no-self-PA:incomplete_case}.
\item By (4) and \cref{prop:avoiding_consistently_computable_reals}. 

\item By \cref{th:badmlrandomoracle}.
\item It is easy to ensure that the direct construction (\cref{thm:bad_oracle_for_consistent_Turing}) produces a non-random sequence. 
\item Because a bad oracle for uniform self-PA is also bad for consistent functionals (\cref{prop:avoiding_consistently_computable_reals}) and each such is $\ge_{\wock\Tur} O$ (see (3)). 
\item By \cite[Prop.2.3]{BGMContinuousHigherRandomness}. 

\item By \cref{thm:good_oracles_for_universal_tests}(c).
\item By \cref{thm:good_oracles_for_universal_tests}(a).
\item By \cref{th:no_A_unniversal_mlr_test}.
\end{enumerate}

We believe that the two folowing questions are of particular interest:

\begin{question}
Must bad oracles for Turing functionals higher compute~$O$? Are they all bad for uniform self-$\PA$?
\end{question}

\begin{question}
Must bad oracles for universal $\ML$-tests collapse $\wock$?
\end{question}

\bibliographystyle{alpha}
\bibliography{bad_oracle}

\begin{thebibliography}{BGM17}

\bibitem[BGM17]{BGMContinuousHigherRandomness}
Laurent Bienvenu, Noam Greenberg, and Benoit Monin.
\newblock Continuous higher randomness.
\newblock {\em J. Math. Log.}, 17(1):1750004, 53, 2017.

\bibitem[CY15]{ChongYu}
C.~T. Chong and Liang Yu.
\newblock Randomness in the higher setting.
\newblock {\em J. Symb. Log.}, 80(4):1131--1148, 2015.

\bibitem[HN07]{HjorthNies2007}
Greg Hjorth and Andr\'{e} Nies.
\newblock Randomness via effective descriptive set theory.
\newblock {\em J. Lond. Math. Soc. (2)}, 75(2):495--508, 2007.

\bibitem[Sac90]{Sacks1990}
Gerald~E. Sacks.
\newblock {\em Higher recursion theory}.
\newblock Perspectives in Mathematical Logic. Springer-Verlag, Berlin, 1990.

\bibitem[Sho82]{Shore:Homogeneity}
Richard~A. Shore.
\newblock On homogeneity and definability in the first-order theory of the
  {T}uring degrees.
\newblock {\em J. Symbolic Logic}, 47(1):8--16, 1982.

\end{thebibliography}

\end{document}